\documentclass[journal]{IEEEtran}

\usepackage{comment}
\usepackage{cite}
\usepackage{amsmath,amssymb,amsfonts,mathrsfs}
\usepackage{algorithm,algorithmic}
\usepackage{graphicx}
\usepackage{textcomp}
\usepackage{xcolor}
\usepackage{comment}
\usepackage{mathtools}
\usepackage{dsfont}
\usepackage{subcaption}
\usepackage{algorithm}
\usepackage{algorithmic}
\usepackage{subcaption}
\usepackage{makecell}
 
\usepackage{epstopdf} 
\usepackage{graphics} % for pdf, bitmapped graphics files 
\usepackage{epstopdf} 
\usepackage{url}
\usepackage{color,soul}
\usepackage{amsthm}
\usepackage[colorlinks=true,linkcolor=blue, citecolor=green, urlcolor=magenta]{hyperref}

\theoremstyle{definition}

\newtheorem{theorem}{Theorem}
\newtheorem{assumption}{Assumption}
\newtheorem{definition}{Definition}
\newtheorem{proposition}{Proposition}
\newtheorem{lemma}{Lemma}
\newtheorem{remark}{Remark}

\newtheorem{fact}{Fact}

\AtEndEnvironment{remark}{\hfill\ensuremath{\square}}
\AtEndEnvironment{definition}{\hfill\ensuremath{\square}}
\makeatletter
\@ifundefined{proof}{% if \proof is not defined, do nothing
}{%
  \let\oldproof\proof
  \renewcommand{\proof}[1]{\oldproof{#1\hfill\ensuremath{\blacksquare}}}
}
\makeatother

\newcommand{\Lim}{\displaystyle\lim}
\allowdisplaybreaks

\title{Dynamic Constrained Stabilization on the $n$-sphere (Extended version)
} 
\author{Mayur Sawant and Abdelhamid Tayebi  
	\thanks{This work was supported by the Natural Sciences and Engineering Research Council of Canada (NSERC), under the grants RGPIN-2020-06270. }
	\thanks{M. Sawant and A. Tayebi are with the Department of Electrical and Computer Engineering, Lakehead University, Thunder Bay, ON P7B 5E1, Canada. (e-mail: {\tt\small msawant, atayebi@lakeheadu.ca}).}%
}%
\begin{document}

\maketitle
\begin{abstract}
We consider the constrained stabilization problem of second-order systems evolving on the $n$-sphere. 
We propose a control strategy with a constraint proximity-based dynamic damping mechanism that ensures safe and almost global asymptotic stabilization of the target point in the presence of star-shaped constraints on the $n$-sphere. 
It is also shown that the proposed approach can be used to deal with the constrained rigid-body attitude stabilization.
The effectiveness of our approach is demonstrated through simulation results on the $2$-sphere and the $3$-sphere in the presence of star-shaped constraint sets.
\end{abstract}
%%%%%%%%%%%%%%%%%%%%%%%%%%%%%%

\section{Introduction}

Various mechanical systems have states that evolve on the $n$-sphere, such as spin-axis stabilization of rigid body systems~\cite{bullo1995control}, two-axis gimbal systems~\cite{osborne2008global}, thrust-vector control for quadrotor aircraft \cite{hua2015control}, and the spherical robot \cite{muralidharan2015geometric}.
In many practical scenarios, the attitude stabilization problem can also be recast as a stabilization on the $3$-sphere.
The stabilization problem on the $n$-sphere (without constraints) has been addressed in the literature using differential geometry and hybrid dynamical systems tools, see for instance \cite{bullo1995control}, \cite{casau2019hybrid}, \cite{casau2019robust}.

In \cite{lee2014feedback}, a logarithmic barrier function is used to design a quaternion-based feedback controller for rigid body attitude stabilization in the presence of multiple attitude-constrained zones, and a single attitude-mandatory zone, characterized by quadratic inequalities.
In \cite{nicotra2019spacecraft}, the authors employ an explicit reference governor to ensure trajectory tracking under conic constraints and actuator saturation limits.
In \cite{danielson2021spacecraft}, an invariant set motion planner is proposed to generate a sequence of quaternion waypoints that steer the spacecraft attitude to a desired configuration while enforcing conic constraints on a body-fixed direction evolving on the $2$-sphere.
In \cite{berkane2021constrained}, the authors addressed the stabilization problem on the $n$-sphere under conic constraints by leveraging the stereographic projection to transform the problem into a classical navigation problem in $\mathbb{R}^n$ with spherical obstacles, enabling the use of existing navigation function-based obstacle avoidance methods.

Although existing approaches address constrained stabilization for dynamical systems, the constraint representations are often limited to conic sets.
%As illustrated in Fig.~\ref{fig:conic_vs_star}a, 
Conic constraints are conservative and exclude a significant portion of the free space from the feasible region for stabilization compared to star-shaped constraints (which include conic constraints as a special case).
In our earlier work \cite{sawant2025constrained}, we introduced a kinematic-level constrained stabilization framework on the $n$-sphere that accommodates star-shaped constraint sets.
%Fig. \ref{fig:conic_vs_star}b provides examples of star-shaped sets on the $2$-sphere.

% \begin{figure}
%     \centering
%     \includegraphics[width=1\linewidth]{Images/conic_vs_star.png}
%     \caption{(a) Conic outer approximation of the unsafe region. (b) Star-shaped outer approximation of the unsafe region.{\color{red} use this figure when you define star-shaped obstacles...not in the intro.....for the conic obstacles figure remove the star shaped obstacle inside the cones...remove the dilation....use just the star-shaped obstacles.}}
%     \label{fig:conic_vs_star}
% \end{figure}

In this work, we propose a control strategy for second-order systems on the $n$-sphere with star-shaped obstacles, relying on constraint-proximity-based dynamic damping, guaranteeing safety over the feasible state space and almost global asymptotic stabilization of the target location.
%we extend the kinematic control schemes on the $n$-sphere to second-order systems.
%Given a kinematic controller that renders a desired point on the $n$-sphere almost globally asymptotically stable while ensuring safety over the free space, we propose a control strategy for second order systems on the $n$-sphere, relying on constraint-proximity-based dynamic damping, guaranteeing safety over the feasible state space and almost global asymptotic stabilization of the target location.

\section{Notations and preliminaries}\label{section:notations}
The sets of real numbers, non-negative real numbers, and natural numbers are denoted by $\mathbb{R}$, $\mathbb{R}_{\geq 0}$, and $\mathbb{N}$, respectively.
Bold lowercase symbols denote vector quantities.
The notation $\mathbf{0}_n$ represents a zero column vector of dimension $n$.
The $n$-dimensional zero matrix and identity matrix are denoted by $\mathbf{O}_n$ and $\mathbf{I}_{n}$, respectively.
The Frobenius norm of a matrix $\mathbf{A}\in\mathbb{R}^{n\times n}$ is defined as $\|\mathbf{A}\|_{F} = \sqrt{\sum_i\sum_ka_{ik}^2}$, where $a_{ik}$ is the element in the $i$-th row and the $k$-th column of $\mathbf{A}$.
Given $\mathcal{A}\subset\mathbb{R}^n$ and $\mathcal{B}\subset\mathbb{R}^n$, the relative complement of $\mathcal{B}$ in $\mathcal{A}$ is given by $\mathcal{A}\setminus\mathcal{B} = \{\mathbf{a}\in\mathcal{A}\mid \mathbf{a}\notin\mathcal{B}\}$.
Given $\mathcal{A}\subset\mathbb{R}^n$, the cardinality of $\mathcal{A}$ is denoted by $\mathrm{card}(\mathcal{A})$.
For a twice continuously differentiable scalar mapping $f:\mathbb{R}\to\mathbb{R}$, we denote its first and second derivatives by $f'(x) = \frac{df(x)}{dx}$ and $f''(x) = \frac{d^2f(x)}{dx^2}$, respectively.

The $n$-sphere $\mathbb{S}^n$ is an $n$-dimensional manifold embedded in the Euclidean space $\mathbb{R}^{n+1}$ and defined as \[\mathbb{S}^n:=\{\mathbf{x}\in\mathbb{R}^{n+1}\mid\|\mathbf{x}\| = 1\}.\]
Given a set $\mathcal{A}\subset\mathbb{S}^n$, the symbols $\overline{\mathcal{A}}, \mathcal{A}^{\circ}$, and $\partial\mathcal{A}$ represent the closure, interior, and the boundary of $\mathcal{A}$ on $\mathbb{S}^n$, where $\partial\mathcal{A} = \overline{\mathcal{A}}\setminus\mathcal{A}^{\circ}$.
In the following, we will provide the definitions of some concepts related to the $n$-sphere that will be used throughout the paper.

\noindent{\bf Tangent space on \texorpdfstring{$\mathbb{S}^n$}{}:} The tangent space to $\mathbb{S}^n$ at $\mathbf{x}\in\mathbb{S}^n$ is given by $\mathsf{T}_{\mathbf{x}}\mathbb{S}^n = \{\mathbf{a}\in\mathbb{R}^{n+1}\mid\mathbf{a}^\top\mathbf{x} = 0\}$, which represents all vectors in $\mathbb{R}^{n+1}$ that are perpendicular to $\mathbf{x}$.
Given $\mathbf{x}\in\mathbb{S}^n$ and $\mathbf{v}\in\mathbb{R}^{n+1}$, the orthogonal projection operator $\mathbf{P}(\mathbf{x})$, which is given by 
\begin{equation}\label{orthogonal_projection_operator_formula}
    \mathbf{P}(\mathbf{x}) = \mathbf{I}_{n+1} - \mathbf{x}\mathbf{x}^\top,
\end{equation}
projects $\mathbf{v}$ onto the tangent space $\mathsf{T}_{\mathbf{x}}\mathbb{S}^n$, \textit{i.e.}, $\mathbf{P}(\mathbf{x})\mathbf{v}\in\mathsf{T}_{\mathbf{x}}\mathbb{S}^n$.

\noindent{\bf Geodesic:} For any two distinct points $\mathbf{a}, \mathbf{b}\in\mathbb{S}^n$ with $\mathbf{a} \ne -\mathbf{b}$, the unique geodesic connecting $\mathbf{a}$ and $\mathbf{b}$ is given by 
\begin{equation}\label{geodesic_expression}
    \mathcal{G}(\mathbf{a}, \mathbf{b}) = \left\{ \mathbf{x} \in \mathbb{S}^n \mid \mathbf{x} = g(\lambda; \mathbf{a}, \mathbf{b}), \lambda\in[0, 1]
    \right\},
\end{equation}
where, motivated by \cite[Section 3.3]{shoemake1985animating}, the mapping $g:[0, 1]\to\mathbb{S}^n$ is defined as
\begin{equation*}
    g(\lambda; \mathbf{a}, \mathbf{b}) = \frac{\sin((1-\lambda)\theta) \mathbf{a} + \sin(\lambda\theta) \mathbf{b}}{\sin\theta},
\end{equation*}
where $\theta = \arccos(\mathbf{a}^\top\mathbf{b})\in[0, \pi]$. 
Since $\mathbf{P}(g(\lambda; \mathbf{a}, \mathbf{b}))\frac{d^2g(\lambda;\mathbf{a}, \mathbf{b})}{d\lambda^2} = \mathbf{0}_{n+1}$ for all $\lambda \in [0, 1]$, using  \cite[Chap. 3, Def. 2.1]{do1992riemannian}, one can confirm that $\mathcal{G}(\mathbf{a}, \mathbf{b})$ is a geodesic and is the curve on $\mathbb{S}^n$ with the smallest path length, connecting $\mathbf{a}$ and $\mathbf{b}$.
If $\mathbf{a} = \mathbf{b}$, then the geodesic $\mathcal{G}(\mathbf{a}, \mathbf{b})$ is trivially the point itself \textit{i.e.,} $\mathcal{G}(\mathbf{a}, \mathbf{a}) = \{\mathbf{a}\}$.

% \noindent{\bf Spherical cone over a set:} Given a set $\mathcal{A}\subset\mathbb{S}^n$ and a point $\mathbf{p}\in\mathbb{S}^n$, the spherical cone $\mathcal{S}_{\mathcal{A}}(\mathbf{x})$ over $\mathcal{A}$ with respect to $\mathbf{p}$ is defined as
% \begin{equation*}
%     \mathcal{S}_{\mathcal{A}}(\mathbf{x}) = \bigcup_{\mathbf{x}\in\mathcal{A}}\mathcal{G}(\mathbf{x}, \mathbf{p}).
% \end{equation*}

\noindent {\bf Star-shaped sets on \texorpdfstring{$\mathbb{S}^n$}{}:} A set $\mathcal{A} \subset \mathbb{S}^n$ is a star-shaped set on $\mathbb{S}^n$ if there exists $\mathbf{g} \in \mathcal{A}$ with $-\mathbf{g} \notin \mathcal{A}$ such that $\mathcal{G}(\mathbf{g}, \mathbf{x}) \subset \mathcal{A}$ for all $\mathbf{x} \in \mathcal{A}$.

Given a star-shaped set $\mathcal{A}$ on $\mathbb{S}^n$, let $\sigma(\mathcal{A})$ be the set of all points $\mathbf{g}$ in $\mathcal{A}$ such that $-\mathbf{g}\notin\mathcal{A}$ and $\mathcal{G}(\mathbf{g}, \mathbf{x})\subset\mathcal{A}$ for all $\mathbf{x}\in\mathcal{A}$, defined as follows:
\begin{equation}\label{sigma_set}
    \sigma(\mathcal{A}) = \{\mathbf{g}\in\mathcal{A}\mid-\mathbf{g}\notin\mathcal{A},~\forall \mathbf{x}\in\mathcal{A},~\mathcal{G}(\mathbf{g}, \mathbf{x})\subset\mathcal{A}\}.
\end{equation}
A few examples of star-shaped subsets of $\mathbb{S}^n$ are shown in Fig. \ref{fig:star_set_examples}. 
For each disjoint set $\mathcal{A}$ in Fig. \ref{fig:star_set_examples}, the magenta dot denotes a point $\mathbf{g}$ selected from $\sigma(\mathcal{A})$.  
Note that the conic constraints, widely used in the literature \cite{danielson2021spacecraft, berkane2021constrained}, constitute a subclass of star-shaped sets on $\mathbb{S}^n$.
%%%%%%%%%%%%%%%%%%%%%%%%%%%%%%

\begin{figure}
    \centering
    \includegraphics[width=0.7\linewidth]{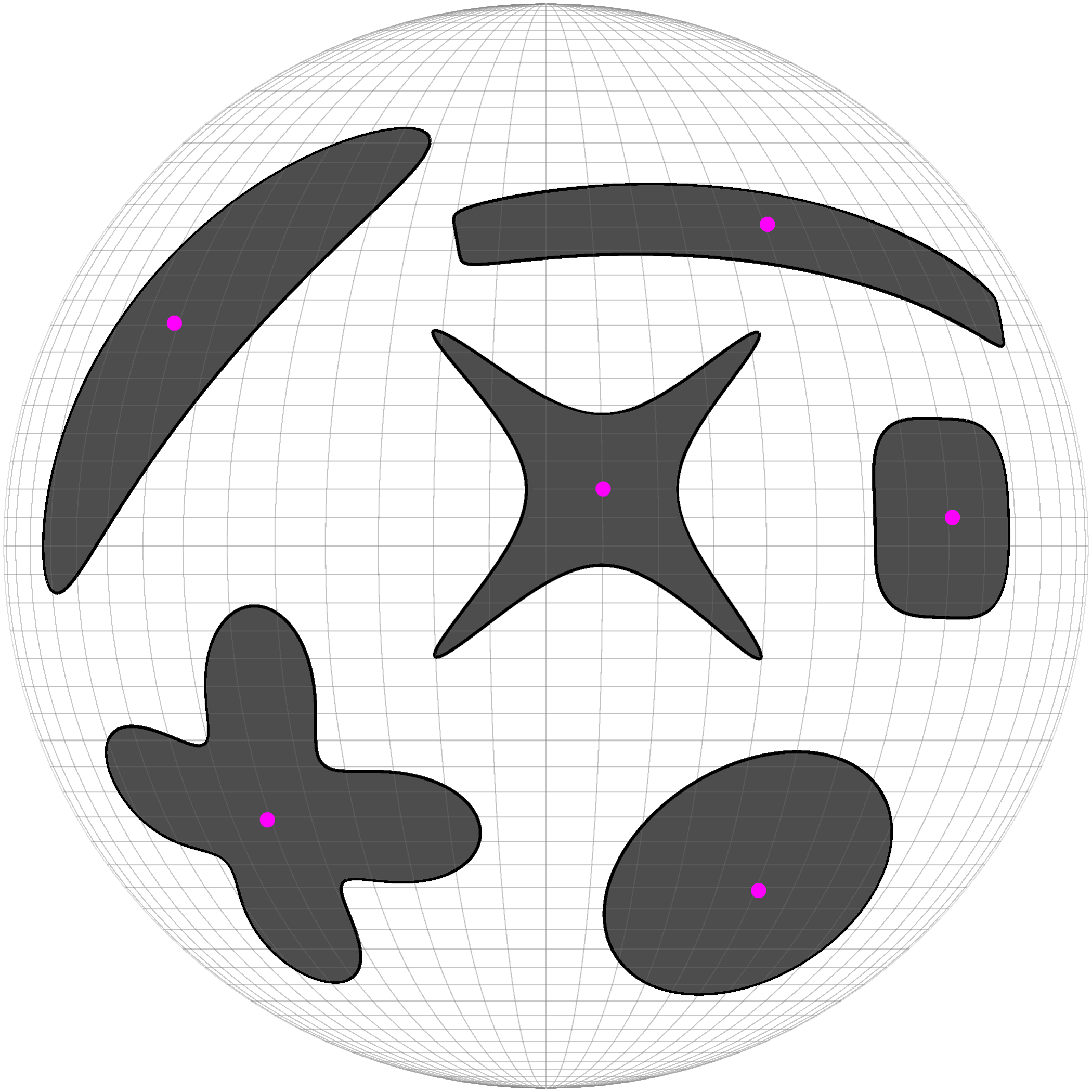}
    \caption{Star-shaped sets on $\mathbb{S}^n$.}
    \label{fig:star_set_examples}
\end{figure}

\section{Problem statement}\label{section:problem_statement}
Consider the following system:
\begin{equation}
\begin{aligned}\label{dynamics_motion_model_on_sphere}
        \dot{\mathbf{x}} &= \mathbf{P}(\mathbf{x})\mathbf{v},\\
        \dot{\mathbf{v}} &= \mathbf{u},
    \end{aligned}
\end{equation}
where $\mathbf{x}\in\mathbb{S}^n$, $\mathbf{v}\in\mathbb{R}^{n+1}$ and $\mathbf{u}\in\mathbb{R}^{n+1}$.
The orthogonal projection matrix $\mathbf{P}(\mathbf{x})$ is defined in \eqref{orthogonal_projection_operator_formula} and ensures that the velocity  vector $\mathbf{P}(\mathbf{x})\mathbf{v}$ belongs to the tangent space $\mathsf{T}_{\mathbf{x}}\mathbb{S}^n$ \textit{i.e.}, $\mathbf{P}(\mathbf{x})\mathbf{v}\in\mathsf{T}_{\mathbf{x}}\mathbb{S}^n$ for all $\mathbf{x}\in\mathbb{S}^n$ and for any $\mathbf{v}\in\mathbb{R}^{n+1}$.
This guarantees that if $\mathbf{x}(0)\in\mathbb{S}^{n}$ then $\mathbf{x}(t)\in\mathbb{S}^n$ for all $t\geq 0$.

The unsafe set $\mathcal{U}$ is the union of $m$ pairwise disjoint, closed, connected subsets $\mathcal{U}_i$ of $\mathbb{S}^n$, \textit{i.e.}, $\mathcal{U} = \bigcup_{i\in\mathbb{I}}\mathcal{U}_i$, where $i\in\mathbb{I}:=\{1, \ldots, m\}, m\in\mathbb{N}$.
The free space is denoted by open subset $\mathcal{M} = \mathbb{S}^n\setminus\mathcal{U}$ of $\mathbb{S}^n$. 
We require that each constraint set $\mathcal{U}_i$, $i\in\mathbb{I}$ has a boundary that is twice continuously differentiable \textit{i.e.,} a smooth surface without corners, as mentioned in the next assumption.
\begin{assumption}\label{assumption:twice_differentiability}
    For each $i\in\mathbb{I}$, the boundary $\partial\mathcal{U}_i$ is an embedded submanifold of $\mathbb{S}^n$ of class $\mathcal{C}^2$, where a differentiable manifold of class $\mathcal{C}^2$ is defined using \cite[Definition 1.33]{lee2009manifolds}.
    %, and for every $i, j\in\mathbb{I}, i\ne j$, $\mathcal{U}_i\cap\mathcal{U}_j = \emptyset$.
\end{assumption}

%{\color{red}Need to explain geometrical meaning.}

We consider the continuous scalar mapping $d_{\mathcal{U}}:\overline{\mathcal{M}}\to\mathbb{R}_{\geq 0}$ that characterizes the separation between any $\mathbf{x}\in\overline{\mathcal{M}}$ and the unsafe set $\mathcal{U}$, and satisfies the following properties:
\begin{enumerate}
\renewcommand{\labelenumi}{D\arabic{enumi}}
\renewcommand{\theenumi}{D\arabic{enumi}}
    \item\label{property:distance1} $d_{\mathcal{U}}(\mathbf{x}) > 0, \;\forall\mathbf{x} \in \mathcal{M}$, and $d_{\mathcal{U}}(\mathbf{x}) = 0, \;\forall\mathbf{x} \in \partial\mathcal{U}$.
    \item \label{property:distance2}There exist $D_d > 0$ and a known parameter $\delta_u > 0$ such that
    for all $\mathbf{x}\in\mathcal{N}^{\delta_u}$, the separation function $d_{\mathcal{U}}(\cdot)$ is continuously differentiable and $\|\mathbf{P}(\mathbf{x})\nabla_{\mathbf{x}}d_{\mathcal{U}}(\mathbf{x})\|\leq D_d$,
\end{enumerate}
where the set $\mathcal{N}^{\delta_u}$ is defined as
\begin{equation}\label{neighborhood_definition}
    \mathcal{N}^{\delta_u} = \left\{\mathbf{x}\in\mathcal{M}\mid d_{\mathcal{U}}(\mathbf{x}) \in(0, \delta_u]\right\}.
\end{equation}
Examples of such separation functions are provided in Appendix \ref{remark:separation_function_examples}.

Let $\mathbf{v}_d:\mathcal{M}\to\mathbb{R}^{n+1}$ be a vector field such that the closed-loop system 
\begin{equation}\label{ideal_kinematics}\dot{\mathbf{x}} = \boldsymbol{\nu}_d(\mathbf{x}), \;\text{with}\;\boldsymbol{\nu}_d(\mathbf{x}):=\mathbf{P}(\mathbf{x})\mathbf{v}_d(\mathbf{x}),\end{equation} 
satisfies the following properties:
% Assume there exists a vector field $\mathbf{v}_d:\overline{\mathcal{M}}\to\mathbb{R}^{n+1}$ such that $\mathbf{x}_d\in\mathcal{M}$ is almost globally asymptotically stable\footnote{An equilibrium point is almost globally asymptotically stable on $\overline{\mathcal{M}}\subset\mathbb{S}^n$ if it is Lyapunov stable and the set of initial conditions from which the solutions to the closed-loop system do not converge to the equilibrium point has zero Lebesgue measure on $\mathbb{S}^n$.} for the closed-loop system
% \begin{equation}\label{ideal_kinematics}\dot{\mathbf{x}} = \boldsymbol{\nu}_d(\mathbf{x}),\end{equation}
% where $\boldsymbol{\nu}_d(\mathbf{x}):=\mathbf{P}(\mathbf{x})\mathbf{v}_d(\mathbf{x})$, and the orthogonal projection operator $\mathbf{P}(\mathbf{x})$ is defined in \eqref{orthogonal_projection_operator_formula}.
% We further assume that for the closed-loop system $\dot{\mathbf{x}} = \boldsymbol{\nu}_d(\mathbf{x})$, the following properties hold:
\begin{enumerate}
\renewcommand{\labelenumi}{V\arabic{enumi}}
\renewcommand{\theenumi}{V\arabic{enumi}}
\item\label{condition:Assump2:moveaway} There exist $\mu>0$ and $\delta_d \in (0, \delta_u]$ such that the inequality 
    \[\nabla_{\mathbf{x}}d_{\mathcal{U}}(\mathbf{x})^\top\boldsymbol{\nu}_d(\mathbf{x}) \geq \mu,\] 
    holds for all $\mathbf{x}\in\mathcal{N}^{\delta_d}$, where the existence of a known scalar $\delta_u > 0$ is guaranteed by Property \ref{property:distance2}, and the set $\mathcal{N}^{\delta_d}$ is obtained by replacing $\delta_u$ with $\delta_d$ in \eqref{neighborhood_definition}.

    \item \label{condition:set_of_equilibria}
    The set $\mathcal{E}\cup\{\mathbf{x}_d\}$ is globally attractive over $\mathcal{M}$, where the set $\mathcal{E}$, which is defined as
    \[
    \mathcal{E} = \{\mathbf{x}\in\mathcal{M}\mid\boldsymbol{\nu}_d(\mathbf{x}) = \mathbf{0}_{n+1}, \mathbf{x}\ne \mathbf{x}_d\},
    \]
    only contains isolated equilibrium points.

    \item \label{condition:Assump2:differentiability}
    The vector field $\boldsymbol{\nu}_d(\cdot)$ is continuously differentiable on $\mathcal{M}$, and is twice continuously differentiable in an open neighborhood of $\mathcal{E}\cup\{\mathbf{x}_d\}$ on $\mathbb{S}^n$.

    %\item {\color{red}Not complete.}Let $\lambda(\mathbf{x})$ be an eigenvalue of $\mathbf{J}_d(\mathbf{x}) = \frac{\partial\boldsymbol{\nu}_d(\mathbf{x})}{\partial\mathbf{x}}$ such that there exists $\mathbf{n}\in\mathsf{T}_{\mathbf{x}}(\mathbb{S}^n)\setminus\{\mathbf{0}_{n+1}\}$ that satisfies $\lambda(\mathbf{x})\mathbf{J}_d(\mathbf{x}) = \lambda(\mathbf{x})\mathbf{n}$

    \item \label{condition:eigenvalues_of_equilibria}
    The Jacobian $\mathbf{J}_d(\mathbf{x}) = \frac{\partial\boldsymbol{\nu}_d(\mathbf{x})}{\partial\mathbf{x}}$ satisfies the following properties:
    \begin{enumerate}
        \item \label{condition:xd}Every eigenvalue of $\mathbf{J}_{d}(\mathbf{x}_d)$ associated with an eigenvector in the tangent space $\mathsf{T}_{\mathbf{x}_d}\mathbb{S}^n$ has negative real part. 
        \item \label{condition:undesired} For every $\mathbf{x}^*\in\mathcal{E}$, $\mathbf{J}_d(\mathbf{x}^*)$ has at least one eigenvalue with positive real part whose associated eigenvector belongs to the tangent space $\mathsf{T}_{\mathbf{x}^*}\mathbb{S}^n$. 
    \end{enumerate}

    \item\label{property:bounded_vector_field}
    There exists $D_1 > 0$ such that $\|\boldsymbol{\nu}_d(\mathbf{x})\| \leq D_1$ for all $\mathbf{x}\in\mathcal{M}$.

    \item\label{property:bounded_jacobian}
    The matrix $\mathbf{J}_d(\mathbf{x})$ is well-defined for all $\mathbf{x}\in\mathcal{M}$.
    Moreover, there exists $D_2 > 0$ such that $\|\mathbf{J}_d(\mathbf{x})\|_F \leq D_2$ for all $\mathbf{x}\in\mathcal{M}\setminus\mathcal{N}^{\delta_d}$, where the set $\mathcal{N}^{\delta_d}$ is obtained by replacing $\delta_u$ with $\delta_d$ in \eqref{neighborhood_definition}.
    
    %\item\label{condition:Assump2:bounded_desired_velocity}
    %There exist $D_1 > 0$ and $D_2 > 0$ such that $\|\boldsymbol{\nu}_d(\mathbf{x})\| \leq D_1$ and $\|\mathbf{J}_d(\mathbf{x})\|_{F} \leq D_2$ for all $\mathbf{x}\in\mathcal{M}$, where the Frobenius norm operator $\|\cdot\|_{F}$ is defined in Section \ref{section:notations}.
\end{enumerate}

\begin{remark}
Property \ref{condition:Assump2:moveaway} indicates that for the closed-loop system \eqref{ideal_kinematics}, when $\mathbf{x}\in\mathcal{N}^{\delta_d}$ the vector $\boldsymbol{\nu}_d(\mathbf{x})$ steers $\mathbf{x}$ away from $\mathcal{U}$ in the direction of increasing value of $d_{\mathcal{U}}(\mathbf{x})$, thereby supporting the forward invariance of the free space $\mathcal{M}$.
On the other hand, Property~\ref{condition:set_of_equilibria} establishes global attractivity of the set $\mathcal{E}\cup\{\mathbf{x}_d\}$, which contains isolated equilibrium points of the closed-loop system \eqref{ideal_kinematics}.
In other words, for any $\mathbf{x}(0)\in\mathcal{M}$, the solution of the closed-loop system \eqref{ideal_kinematics} satisfies \[\Lim_{t\to\infty}\mathbf{x}(t)\in\mathcal{E}\cup\{\mathbf{x}_d\}.\]
Furthermore, Property \ref{condition:set_of_equilibria} inherently excludes the presence of non-equilibrium limit sets within $\mathcal{M}$.
The twice continuous differentiability of $\boldsymbol{\nu}_d(\mathbf{x})$ in an open neighborhood of the isolated equilibrium points, as mentioned in Property \ref{condition:Assump2:differentiability}, allows one to analyze their stability properties through Jacobian analysis. 
Property \ref{condition:xd} ensures that $\mathbf{x}_d$ is asymptotically stable for the closed-loop system \eqref{ideal_kinematics} on $\mathbb{S}^n$.
Additionally, by Property \ref{condition:undesired} and the stable manifold theorem \cite[Section 2.7, Pg 107]{perko2013differential}, one can conclude that every equilibrium point $\mathbf{x}^*$ in $\mathcal{E}$ is unstable and has a stable manifold of zero Lebesgue measure on $\mathbb{S}^n$.
\end{remark}

Suppose we are given a separation function $d_{\mathcal{U}}(\cdot)$ satisfying Properties \ref{property:distance1} and \ref{property:distance2}, alongside a desired vector field $\mathbf{v}_d(\cdot)$ that inherits Properties \ref{condition:Assump2:moveaway}-\ref{property:bounded_jacobian} for the kinematic closed-loop system \eqref{ideal_kinematics}.
We aim to design a control law $\mathbf{u}$ for the dynamic model \eqref{dynamics_motion_model_on_sphere} such that for the resulting closed-loop system, the following objectives are satisfied:
\begin{enumerate}
    \item The set $\mathcal{M}\times\mathbb{R}^{n+1}$ is forward invariant.
    \item The desired equilibrium point $(\mathbf{x}_d, \mathbf{0}_{n+1})$ is almost globally asymptotically stable over $\mathcal{M}\times\mathbb{R}^{n+1}$.\footnote{An equilibrium point is almost globally asymptotically stable on $\mathcal{M}\times\mathbb{R}^{n+1}$ if it is Lyapunov stable and the set of initial conditions in $\mathcal{M}\times\mathbb{R}^{n+1}$ from which the solutions to the closed-loop system do not converge to the equilibrium point has zero Lebesgue measure on $\mathcal{M}\times\mathbb{R}^{n+1}$.}
\end{enumerate}

\section{Generic feedback control design}
\begin{comment}
{\color{red}Direct application of desired kinematic vector field $\mathbf{u} = \boldsymbol{\nu}_d(\mathbf{x})$ to the dynamic model \eqref{dynamics_motion_model_on_sphere} without incorporating damping, can cause $\mathbf{x}$-trajectories of the closed-loop system to overshoot.
This overshoot may lead the system into unsafe set, resulting in violation of the constraints.
To avoid such overshoots, a damping vector of the form $-k\mathbf{v}$ with a sufficiently high gain $k>0$, can be introduced.
However, excessively high damping significantly reduces the magnitude of the velocity, leading to slow convergence to the desired target state. I suggest to remove this paragraph as it is well known that first order controllers cannot extend directly to augmented 2nd order systems....this may confuse the reviewers}
\end{comment}

%To address this trade-off, we propose a dynamic damping gain that adapts based on the proximity of $\mathbf{x}$ to the unsafe set $\mathcal{U}$ in the sense of the separation function $d_{\mathcal{U}}(\mathbf{x})$.

We propose the following control scheme:
\begin{equation}\label{proposed_feedback_control_input}
    \mathbf{u}(\boldsymbol{\xi}) = -k_d\beta(d_{\mathcal{U}}(\mathbf{x}))(\mathbf{v} - \boldsymbol{\nu}_d(\mathbf{x})) + \mathbf{J}_d(\mathbf{x})\mathbf{P}(\mathbf{x})\mathbf{v},
\end{equation}
where $k_d > 0$ and the composite state vector $\boldsymbol{\xi}$ is given by $\boldsymbol{\xi} := (\mathbf{x}, \mathbf{v})\in\mathcal{M}\times\mathbb{R}^{n+1}$. 
The vector $\boldsymbol{\nu}_d$ is such that Properties \ref{condition:Assump2:moveaway}-\ref{property:bounded_jacobian} hold for the closed-loop system \eqref{ideal_kinematics}. 
The matrix $\mathbf{J}_d(\mathbf{x}) = \frac{\partial\boldsymbol{\nu}_d(\mathbf{x})}{\partial\mathbf{x}}$ is defined in Property \ref{condition:eigenvalues_of_equilibria}.

The continuous, non-negative separation function $d_{\mathcal{U}}(\mathbf{x})$, which satisfies Properties \ref{property:distance1} and \ref{property:distance2}, characterizes the separation between any $\mathbf{x}\in\overline{\mathcal{M}}$ and the unsafe set $\mathcal{U}$.
Finally, the scalar function $\beta(d_{\mathcal{U}}(\cdot))$ is defined as
\begin{equation}\label{beta_function_definition}
    \beta(d_{\mathcal{U}}(\mathbf{x})) = \begin{cases}
        d_{\mathcal{U}}(\mathbf{x})^{-1}, & d_{\mathcal{U}}(\mathbf{x}) \leq \epsilon_1, \\
        \phi(d_{\mathcal{U}}(\mathbf{x})), &\epsilon_1 \leq d_{\mathcal{U}}(\mathbf{x}) \leq \epsilon_2,\\
        1, & d_{\mathcal{U}}(\mathbf{x}) \geq \epsilon_2,        
    \end{cases}
\end{equation}
where $\epsilon_1 \in (0, \delta_u)$,  $\epsilon_2 \in (\epsilon_1, \delta_u)$, and the existence of $\delta_u > 0$ is guaranteed in Property \ref{property:distance2}.
The continuously differentiable scalar mapping $\phi : [\epsilon_1, \epsilon_2]\to\left[1,{\epsilon_1}^{-1}\right]$ satisfies $\phi(\epsilon_1) = \epsilon_1^{-1}$, $\phi(\epsilon_2) = 1$, $\phi'(\epsilon_1) = -\epsilon_1^{-2}$, $\phi'(\epsilon_2) = 0$ and $\phi(p) > 0, \;\forall p\in[\epsilon_1, \epsilon_2]$.\footnote{An example of such a function is $\phi(p) = (1 - b(s(p)))\frac{1}{p} + b(s(p))$, where $s(p) = \frac{p - \epsilon_1}{\epsilon_2 - \epsilon_1}$ and the blending function $b(\cdot)$ is given by $b(s) = 3s^2 - 2s^3$.}

Note that the control law \eqref{proposed_feedback_control_input} relies on a dynamic damping gain $k_d\beta(d_{\mathcal{U}}(\mathbf{x}))$ that adapts based on the proximity of $\mathbf{x}$ to the unsafe set $\mathcal{U}$ in the sense of the separation function $d_{\mathcal{U}}(\cdot)$. Specifically, the damping gain is low when the value of $d_{\mathcal{U}}(\mathbf{x})$ is high, and increases as $\mathbf{x}$ approaches the boundary of the unsafe set.\\
The control input is designed so that the scalar function
\begin{equation}\label{potential_function_used}
    V(\boldsymbol{\xi}) = \frac{1}{2}\|\mathbf{v}- \boldsymbol{\nu}_d(\mathbf{x})\|^2
\end{equation}
is non-increasing along the trajectories of the closed-loop system \eqref{dynamics_motion_model_on_sphere}-\eqref{proposed_feedback_control_input} when $\mathbf{x}\in\mathcal{M}$.
This suggests that, under the control input \eqref{proposed_feedback_control_input}, $\mathbf{v}(t)$ tends to align with $\boldsymbol{\nu}_d(\mathbf{x}(t))$ for all $t\geq 0$ as long as $\mathbf{x}(t)\in\mathcal{M}$.

From Property \ref{property:distance1}, $d_{\mathcal{U}}(\mathbf{x})>0, \;\forall \mathbf{x}\in\mathcal{M}$ and $d_{\mathcal{U}}(\mathbf{x}) = 0, \;\forall \mathbf{x}\in\partial\mathcal{U}$.
Hence, by \eqref{beta_function_definition}, $\beta(d_{\mathcal{U}}(\mathbf{x}))$ is positive on $\mathcal{M}$ and tends to $+\infty$ as $\mathbf{x}$ approaches $\partial\mathcal{U}$.
As a result, the magnitude of the damping term in \eqref{proposed_feedback_control_input} increases near the boundary, which, loosely speaking, enforces faster alignment of $\mathbf{v}$ with $\boldsymbol{\nu}_d(\mathbf{x})$ in the region close to $\partial\mathcal{U}$.

Moreover, by Property \ref{condition:Assump2:moveaway}, the vector $\boldsymbol{\nu}_d(\mathbf{x})$ points in a direction of increase of $d_{\mathcal{U}}(\mathbf{x})$ whenever $d_{\mathcal{U}}(\mathbf{x}) \in (0, \delta_d)$ for some $\delta_d > 0$. 
Therefore, under the control input \eqref{proposed_feedback_control_input}, there exists $T\geq 0$ such that for all $t\geq T$,
the velocity $\mathbf{v}(t)$ points in a direction along which $d_{\mathcal{U}}(\mathbf{x}(t))$ increases whenever $d_{\mathcal{U}}(\mathbf{x}(t))\in(0, \delta_d)$.
This allows us to show that if $\boldsymbol{\xi}(0)\in\mathcal{M}\times\mathbb{R}^{n+1}$, then $\boldsymbol{\xi}(t)\in\mathcal{M}\times\mathbb{R}^{n+1}$ for all $t\geq 0$, as stated in the next lemma.
\begin{lemma}\label{lemma:safety_lemma}
    Consider the closed-loop system \eqref{dynamics_motion_model_on_sphere}-\eqref{proposed_feedback_control_input} under Assumption \ref{assumption:twice_differentiability}.
    If $d_{\mathcal{U}}(\mathbf{x}(0)) > 0$, then the following statements hold:
    \begin{enumerate}
        \item \label{claim1:lemmaVTF}$d_{\mathcal{U}}(\mathbf{x}(t)) > 0$ for all $t \geq 0$.
        \item\label{claim2:lemmaVTF} There exists $t_d(\boldsymbol{\xi}(0)) \geq 0$ such that \[d_{\mathcal{U}}(\mathbf{x}(t)) \geq \delta_d \text{ for all } t\geq t_d(\boldsymbol{\xi}(0)),\] where the existence of $\delta_d > 0$ is assumed in Property \ref{condition:Assump2:moveaway}.
        \item \label{claim3:lemmaVTF} There exists a constant $D_{\mathbf{u}}(\boldsymbol{\xi}(0))> 0$ such that the control input satisfies\[\|\mathbf{u}(\boldsymbol{\xi}(t))\|\leq D_{\mathbf{u}}(\boldsymbol{\xi}(0))\;\text{for all}\;t\geq 0.\]
    \end{enumerate}
\end{lemma}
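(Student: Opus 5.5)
We take $V(\boldsymbol{\xi})=\frac12\|\mathbf{e}\|^2$ in \eqref{potential_function_used} as the central object, with $\mathbf{e}:=\mathbf{v}-\boldsymbol{\nu}_d(\mathbf{x})$. Along \eqref{dynamics_motion_model_on_sphere}-\eqref{proposed_feedback_control_input}, as long as $\mathbf{x}(t)\in\mathcal{M}$, we have $\dot{\boldsymbol{\nu}}_d=\mathbf{J}_d(\mathbf{x})\mathbf{P}(\mathbf{x})\mathbf{v}$. The feedforward term cancels exactly, which gives
\[
\dot{\mathbf{e}}=-k_d\beta(d_{\mathcal{U}}(\mathbf{x}))\mathbf{e}.
\]
Writing $\mathbf{e}(t)=\exp\big(-k_d\int_0^t\beta(d_{\mathcal{U}}(\mathbf{x}(s)))ds\big)\mathbf{e}(0)$, we get $\|\mathbf{e}(t)\|\le\|\mathbf{e}(0)\|$ on the maximal interval $[0,T^*)$ where $\mathbf{x}\in\mathcal{M}$. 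Together with \ref{property:bounded_vector_field}, this gives $\|\mathbf{v}(t)\|\le \|\mathbf{e}(0)\|+D_1$. The solution therefore cannot blow up in finite time, and the only way the interval can end is $d_{\mathcal{U}}(\mathbf{x}(t))\to 0$ as $t\to T^*<\infty$.

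\textbf{Claim 1.} Assume $T^*<\infty$. Take the last time $t_0<T^*$ with $d_{\mathcal{U}}(\mathbf{x}(t_0))=\min\{\epsilon_1,\delta_d\}$, so that on $(t_0,T^*)$ we have $d:=d_{\mathcal{U}}(\mathbf{x})\le \epsilon_1$ and $\beta=d^{-1}$. By \ref{property:distance2}, $d$ is $\mathcal{C}^1$ there and
\[
\dot d=\nabla d^\top\mathbf{P}\mathbf{v}=\nabla d^\top\boldsymbol{\nu}_d+(\mathbf{P}\nabla d)^\top\mathbf{e}\ge \mu-D_d\|\mathbf{e}\|,
\]
using \ref{condition:Assump2:moveaway} and $\mathbf{P}\boldsymbol{\nu}_d=\boldsymbol{\nu}_d$. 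The barrier must now be used. Since $\dot d\ge-D_d\|\mathbf{e}(0)\|=:-c$, we get $d(t)\le c(T^*-t)$, so $\int_{t_0}^{T^*}d^{-1}\,ds=\infty$. Hence $\|\mathbf{e}(t)\|\to0$ as $t\to T^*$, so eventually $\dot d\ge\mu/2>0$. This contradicts $d\to0$, so $T^*=\infty$ and Claim 1 holds.

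\textbf{Claim 2.} This is the main obstacle: we need to exclude infinitely many re-entries into $\mathcal{N}^{\delta_d}$ and show that $\mathbf{e}$ becomes small. Since $\beta\ge \min\{1,\min_{[\epsilon_1,\epsilon_2]}\phi\}=:\underline\beta>0$ on $\mathcal{M}$ (as $\phi>0$ on a compact interval), we have $\|\mathbf{e}(t)\|\le e^{-k_d\underline\beta t}\|\mathbf{e}(0)\|$. Pick $T_1$ with $D_d\|\mathbf{e}(t)\|\le\mu/2$ for $t\ge T_1$. Then for $t\ge T_1$, whenever $d(t)\in(0,\delta_d]$ we have $\dot d\ge\mu/2$. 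Consequently $d$ cannot decrease through the level $\delta_d$ after $T_1$, and if $d(T_1)<\delta_d$ it reaches $\delta_d$ within time $2\delta_d/\mu$. Setting $t_d=T_1+2\delta_d/\mu$ gives Claim 2. Some care is needed at the level $\delta_d$ itself, since \ref{condition:Assump2:moveaway} holds on the closed set $\mathcal{N}^{\delta_d}$, which includes $d=\delta_d$.

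\textbf{Claim 3.} We have $\|\mathbf{u}\|\le k_d\beta\|\mathbf{e}\|+\|\mathbf{J}_d\mathbf{P}\mathbf{v}\|$. For $t\ge t_d$, $\mathbf{x}\in\mathcal{M}\setminus\mathcal{N}^{\delta_d}$, so $\beta\le\max\{\epsilon_1^{-1},\max\phi\}$, $\|\mathbf{J}_d\|_F\le D_2$ by \ref{property:bounded_jacobian}, and $\|\mathbf{v}\|$ is bounded. On the compact interval $[0,t_d]$, the trajectory stays in a compact subset of $\mathcal{M}$ by Claim 1, since $\min_{[0,t_d]}d>0$ by continuity. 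There $\beta$ is bounded, and $\mathbf{J}_d$ is continuous by \ref{condition:Assump2:differentiability}, hence bounded. So $\mathbf{u}$ is bounded on $[0,t_d]$. Taking the maximum of the two bounds gives $D_{\mathbf{u}}(\boldsymbol{\xi}(0))$.
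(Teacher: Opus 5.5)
Your proposal is correct and follows essentially the paper's proof. Claim 1 goes by contradiction, using $d_{\mathcal{U}}(\mathbf{x}(t))\le c\,(T^*-t)$ to force $\int d_{\mathcal{U}}^{-1}=\infty$ and hence $\mathbf{e}\to\mathbf{0}$; Claim 2 follows from $\mathbf{e}\to\mathbf{0}$ together with V1; Claim 3 splits time into $[0,t_d]$ and $[t_d,\infty)$. Your additions are the self-contained exponential decay of $\mathbf{e}$ from the uniform lower bound on $\beta$ (the paper cites its Lemma 2 instead), and the cleaner maximal-interval framing of the hitting time.

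There are two small slips in Claim 3. First, $\delta_d<\epsilon_1$ is allowed, so on $\{d_{\mathcal{U}}\ge\delta_d\}$ the bound should be $\beta\le\max\{1,\epsilon_1^{-1},\delta_d^{-1}\}$, not $\max\{\epsilon_1^{-1},\max\phi\}$. Second, points with $d_{\mathcal{U}}=\delta_d$ still lie in $\mathcal{N}^{\delta_d}$, so V6 does not cover them; bound $\mathbf{J}_d$ there by continuity (V3) on the compact set $\{d_{\mathcal{U}}\ge\delta_d\}\subset\mathcal{M}$ (the paper makes the same gloss).
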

\proof{See Appendix \ref{proof:lemma:safety_lemma}.}

Although the damping gain grows unbounded as $\mathbf{x}$ tends to $\partial\mathcal{U}$, using Claims \ref{claim1:lemmaVTF} and \ref{claim2:lemmaVTF} of Lemma \ref{lemma:safety_lemma} one can show that for every initial condition $\boldsymbol{\xi}(0)\in\mathcal{M}\times\mathbb{R}^{n+1}$, there exists $\underline{\delta}(\boldsymbol{\xi}(0)) > 0$ such that the separation function satisfies $d_{\mathcal{U}}(\mathbf{x}(t))\geq \underline{\delta}(\boldsymbol{\xi}(0))$ for all $t\geq 0$.

Moreover, the scalar function $\beta(d_{\mathcal{U}}(\cdot))$ given in \eqref{beta_function_definition} is undefined if and only if $\mathbf{x}\in\partial\mathcal{U}$ and satisfies $\beta(d_{\mathcal{U}}(\mathbf{x})) > 0$ for all $\mathbf{x}\in\mathcal{M}$.
Therefore, there exists $\bar{\beta}(\boldsymbol{\xi}(0)) > 0$ such that 
\[\beta(d_{\mathcal{U}}(\mathbf{x}(t))) \leq \bar{\beta}(\boldsymbol{\xi}(0)), \;\forall t\geq 0,\]
which establishes an upper bound on the magnitude of the damping gain in \eqref{proposed_feedback_control_input}.
Combined with the boundedness of the remaining terms in \eqref{proposed_feedback_control_input}, this implies that the control input $\mathbf{u}(\boldsymbol{\xi}(t))$ is bounded for all $t\geq 0$, as stated in Claim \ref{claim3:lemmaVTF} of Lemma \ref{lemma:safety_lemma}.

%{\color{red}Need to review this paragraph.}
By virtue of Lemma \ref{lemma:safety_lemma}, for any initial condition $\boldsymbol{\xi}(0)\in\mathcal{M}\times\mathbb{R}^{n+1}$, the scalar function $V(\cdot)$ defined in \eqref{potential_function_used} satisfies $\dot{V}(\boldsymbol{\xi}(t)) \leq 0$ for all $t\geq 0$.
Moreover, for the system $\dot{\mathbf{x}} = \boldsymbol{\nu}_d(\mathbf{x})$, $\mathbf{x}_d$ is almost globally asymptotically stable over $\overline{\mathcal{M}}$, where the desired vector field $\boldsymbol{\nu}_d(\cdot)$ satisfies Properties \ref{condition:Assump2:moveaway}-\ref{property:bounded_jacobian}.
This indicates that the closed-loop system \eqref{dynamics_motion_model_on_sphere}-\eqref{proposed_feedback_control_input} inherits the safety and convergence properties of the kinematic system $\dot{\mathbf{x}} = \boldsymbol{\nu}_d(\mathbf{x})$ and ensures that the desired point $(\mathbf{x}_d, \mathbf{0}_{n+1})$ is almost globally asymptotically stable over $\mathcal{M}\times\mathbb{R}^{n+1}$, as established in the next theorem.

\begin{theorem}\label{theorem:VTF}
    For the closed-loop system \eqref{dynamics_motion_model_on_sphere}-\eqref{proposed_feedback_control_input} under Assumption \ref{assumption:twice_differentiability}, the following statements hold:
    \begin{enumerate}
        \item \label{claim1:theorem} The state space $\mathcal{M}\times\mathbb{R}^{n+1}$ is forward invariant.
        \item \label{claim2:theorem} $\|\mathbf{v}(t) - \boldsymbol{\nu}_d(\mathbf{x}(t))\|$ is monotonically decreasing for all $t\geq 0$.
        \item \label{claim3:theorem} The set of equilibrium points is given by $\mathcal{S}\cup\{(\mathbf{x}_d, \mathbf{0}_{n+1})\}$, where the set $\mathcal{S}$ is defined as
        \begin{equation}\label{equilibrium_set_theorem}
            \mathcal{S} = \{(\mathbf{x}, \mathbf{0}_{n+1})\in\mathcal{M}\times\{\mathbf{0}_{n+1}\}\mid\mathbf{x}\in\mathcal{E}\},
        \end{equation}
        and the set $\mathcal{E}$ is defined in Property \ref{condition:set_of_equilibria}.
        \item \label{claim4:theorem} The equilibrium point $(\mathbf{x}_d, \mathbf{0}_{n+1})$ is almost globally asymptotically stable over $\mathcal{M}\times\mathbb{R}^{n+1}$.
    \end{enumerate}
\end{theorem}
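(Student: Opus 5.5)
Claim \ref{claim1:theorem} is immediate from Claim \ref{claim1:lemmaVTF} of Lemma \ref{lemma:safety_lemma}: every trajectory starting in $\mathcal{M}\times\mathbb{R}^{n+1}$ keeps $d_{\mathcal{U}}(\mathbf{x}(t))>0$, so $\mathbf{x}(t)\in\mathcal{M}$. Since $\mathbf{v}$ is unconstrained, the state space is forward invariant.

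For Claim \ref{claim2:theorem}, differentiate $V$ in \eqref{potential_function_used} along the closed loop. Using $\dot{\boldsymbol{\nu}}_d=\mathbf{J}_d(\mathbf{x})\dot{\mathbf{x}}=\mathbf{J}_d(\mathbf{x})\mathbf{P}(\mathbf{x})\mathbf{v}$, the feedforward term in \eqref{proposed_feedback_control_input} cancels exactly. This gives
\[
\dot V=-k_d\beta(d_{\mathcal{U}}(\mathbf{x}))\|\mathbf{v}-\boldsymbol{\nu}_d(\mathbf{x})\|^2=-2k_d\beta V .
\]
Hence $\|\mathbf{v}-\boldsymbol{\nu}_d\|(t)=\|\mathbf{v}(0)-\boldsymbol{\nu}_d(\mathbf{x}(0))\|\exp\!\big(-k_d\int_0^t\beta\big)$. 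Since $\beta\ge\min(1,\min\phi)>0$, this quantity decreases strictly unless it is zero, and then it stays zero. Because $\beta\geq 1$ when $d_{\mathcal{U}}\ge\epsilon_2$ and $\beta$ is bounded below everywhere, the error $\mathbf{e}:=\mathbf{v}-\boldsymbol{\nu}_d$ decays exponentially to zero.

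For Claim \ref{claim3:theorem}, set $\dot{\mathbf{x}}=0$ and $\dot{\mathbf{v}}=0$. The first gives $\mathbf{P}(\mathbf{x})\mathbf{v}=0$. The second then gives $\mathbf{v}=\boldsymbol{\nu}_d(\mathbf{x})$, since $\beta>0$. Applying $\mathbf{P}$ yields $\boldsymbol{\nu}_d(\mathbf{x})=\mathbf{P}\mathbf{v}=0$, hence $\mathbf{v}=\mathbf{0}$ and $\mathbf{x}\in\mathcal{E}\cup\{\mathbf{x}_d\}$. Conversely, every such point is an equilibrium.

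For Claim \ref{claim4:theorem}, view the $\mathbf{x}$-dynamics as $\dot{\mathbf{x}}=\boldsymbol{\nu}_d(\mathbf{x})+\mathbf{P}(\mathbf{x})\mathbf{e}(t)$, i.e. a cascade driven by an exponentially vanishing input. By Claim \ref{claim2:lemmaVTF} of Lemma \ref{lemma:safety_lemma}, after $t_d$ the trajectory stays in the compact set $\{d_{\mathcal{U}}\ge\delta_d\}$, where $\boldsymbol{\nu}_d$ is $C^1$ with bounded Jacobian (Properties \ref{property:bounded_vector_field}-\ref{property:bounded_jacobian}). The $\boldsymbol{\xi}$-trajectory is therefore precompact and $\mathbf{e}\to0$, so the $\omega$-limit set is nonempty, compact, invariant and contained in $\{\mathbf{e}=0\}$. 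On that set the dynamics reduce to \eqref{ideal_kinematics}.

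The convergence step is as follows. I would use the theory of asymptotically autonomous systems (Thieme's/Markus' results), or a direct argument, to show that the $\omega$-limit set is a chain-recurrent set of \eqref{ideal_kinematics} restricted to $\mathcal{M}$. Property \ref{condition:set_of_equilibria} excludes non-equilibrium limit sets and the equilibria are isolated, so the $\omega$-limit set is a single point. Hence $\boldsymbol{\xi}(t)\to\mathcal{S}\cup\{(\mathbf{x}_d,\mathbf{0})\}$.

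Local asymptotic stability of $(\mathbf{x}_d,\mathbf{0})$ follows from linearization, which is valid by Property \ref{condition:Assump2:differentiability}. In coordinates $(\mathbf{x},\mathbf{e})$ the Jacobian is block triangular, with blocks $\mathbf{J}_d$ restricted to the tangent space and $-k_d\beta\mathbf{I}$. By Property \ref{condition:xd} both blocks are Hurwitz on the relevant tangent directions; the normal direction of $\mathbf{v}$ is handled by $\mathbf{e}$. The same triangular structure at $\mathbf{x}^*\in\mathcal{E}$ inherits the eigenvalue with positive real part from Property \ref{condition:undesired}. The stable manifold theorem then shows that each point of $\mathcal{S}$ has a stable manifold of dimension lower than $2n+1$ (the dimension of $\mathbb{S}^n\times\mathbb{R}^{n+1}$), hence of zero measure. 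Since $\mathcal{E}$ is countable (isolated points), the union of these global stable manifolds, obtained by flowing the local ones backward, has measure zero. This yields almost global asymptotic stability.

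The main obstacle is the convergence step for the perturbed cascade. Converting ``$\mathbf{e}\to0$ exponentially'' plus Property \ref{condition:set_of_equilibria} into convergence to an equilibrium requires the asymptotically autonomous limit-set argument and careful use of precompactness away from $\partial\mathcal{U}$. I would try first to invoke Thieme's theorem on limit sets of asymptotically autonomous flows.
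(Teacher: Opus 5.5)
Your overall route is the paper's. Forward invariance comes from Claim 1 of Lemma 1. Claim 2 uses $\dot{\mathbf z}=-k_d\beta(d_{\mathcal U}(\mathbf x))\mathbf z$; the paper defers to Lemma 2, taken from earlier work, whereas your formula $\mathbf z(t)=\mathbf z(0)\exp(-k_d\int_0^t\beta)$ is self-contained and even gives exponential decay. The equilibria follow from $\beta>0$ and $\mathbf P(\mathbf x)\boldsymbol\nu_d=\boldsymbol\nu_d$. Attractivity follows from $\mathbf z\to\mathbf 0$ plus Property V2, and stability from linearization with Property V4. Your block-triangular Jacobian in $(\mathbf x,\mathbf e)$ coordinates is a cleaner equivalent of the paper's eigenvector computation in $(\mathbf x,\mathbf v)$. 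Your precompactness argument (via Claim 2 of Lemma 1) and the countability of $\mathcal E$ are details the paper leaves implicit.

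The gap is the convergence step, and the fix you propose does not close it as stated. Thieme's theorem tells you that the $\mathbf x$-projection of the $\omega$-limit set is a compact, connected, invariant, internally chain-transitive set of $\dot{\mathbf x}=\boldsymbol\nu_d(\mathbf x)$. (You get the same from the fact that $\omega$-limit sets of the autonomous closed loop in $(\mathbf x,\mathbf e)$ are internally chain transitive and lie in $\{\mathbf e=\mathbf 0\}$.) Property V2 only says that every \emph{orbit} of the kinematic flow converges to an equilibrium. By itself it does not exclude a homoclinic loop at a saddle $\mathbf x^*\in\mathcal E$, or a heteroclinic cycle through several points of $\mathcal E$. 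Such a set is chain transitive, consists entirely of orbits converging to equilibria, and is compatible with V2 and V4(b). So the inference ``V2 plus isolated equilibria implies the $\omega$-limit set is a single point'' does not follow. Thieme's convergence result needs an extra \emph{acyclicity} hypothesis: no cyclic chain of connecting orbits among the equilibria.

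To finish the argument you have two options. You can add such a hypothesis on $\boldsymbol\nu_d$, for example a strict Lyapunov or gradient-like structure on $\mathcal M$. Alternatively, you can exploit the cascade quantitatively. For instance, suppose $k_d\inf\beta$ exceeds a Lipschitz bound of $\boldsymbol\nu_d$ on a compact neighbourhood of $\{d_{\mathcal U}\ge\delta_d\}$ in $\mathcal M$. Then a contraction argument gives an asymptotic phase: a solution $\mathbf y$ of $\dot{\mathbf y}=\boldsymbol\nu_d(\mathbf y)$ with $\mathbf x(t)-\mathbf y(t)\to\mathbf 0$, and V2 applied to $\mathbf y$ finishes the proof. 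The paper's Part 1 simply asserts that $\mathbf v-\boldsymbol\nu_d(\mathbf x)\to\mathbf 0$ together with V2 yields $\lim\mathbf x(t)\in\mathcal E\cup\{\mathbf x_d\}$, so it does not supply this ingredient either. You were right to single this step out.

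A smaller point concerns the undesired equilibria. V4(b) allows eigenvalues with zero real part at $\mathbf x^*\in\mathcal E$. So the set of initial conditions converging to $(\mathbf x^*,\mathbf 0)$ should be placed in a local center-stable manifold of dimension at most $2n$, as the paper does by invoking the center manifold theorem. The hyperbolic stable manifold theorem alone does not cover this case.
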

\proof{See Appendix \ref{proof:theorem:VTF}.}

\section{Explicit feedback control design}\label{section:explicit_feedback_control}
The design of the vector field $\boldsymbol{\nu}_d(\cdot)$ is motivated by \cite[Section V]{sawant2025constrained}.
Furthermore, for any $\mathbf{x}\in\overline{\mathcal{M}}\subset\mathbb{S}^n$, we choose the spherical distance as the separation function $d_{\mathcal{U}}(\cdot)$, defined by
\begin{equation}\label{spherical_distance_definition}
    d_{\mathcal{U}}(\mathbf{x}) = \underset{\mathbf{a}\in\mathcal{U}}{\inf}\;\arccos(\mathbf{x}^\top\mathbf{a}),
\end{equation}
where for any $p\in[-1, 1]$, the inverse cosine function satisfies $\arccos(p)\in[0, \pi]$. 
See Appendix \ref{remark:separation_function_examples} for more details.

Before designing the desired vector field $\boldsymbol{\nu}_d(\cdot)$, we first elaborate on the restrictions \cite[Assumptions 1 and 2]{sawant2025constrained} imposed on the unsafe set characterization.
For each $i\in\mathbb{I}$, the constraint set $\mathcal{U}_i$ is assumed to be a star-shaped set on $\mathbb{S}^n$, where a star-shaped set on $\mathbb{S}^n$ is defined in Section \ref{section:notations}, and a few examples of such sets are provided in Fig. \ref{fig:star_set_examples}.
We further require constraint sets to be sufficiently separated from each other to avoid the presence of closed trajectories away from the desired point $\mathbf{x}_d$.
To expand further on this requirement, we define the following sets:

\begin{definition}\label{definition:delta_dilation}Given $\delta > 0$, the $\delta$-dilation $\mathcal{D}_i^{\delta}$ of the constraint set $\mathcal{U}_i$ on $\mathbb{S}^n$ is defined as
\begin{equation}\label{definition:dilation}
\mathcal{D}_i^{\delta} = \mathcal{U}_i\cup\mathcal{N}_i^{\delta},
\end{equation}
where the set $\mathcal{N}_i^{\delta}$ is defined analogously to \eqref{neighborhood_definition} as follows:
\begin{equation}\label{individual_neighborhood}
    \mathcal{N}_i^{\delta} = \left\{\mathbf{x}\in\mathcal{M}\mid d_{\mathcal{U}_i}(\mathbf{x})\in(0, \delta]\right\},
\end{equation}
where $d_{\mathcal{U}_i}(\mathbf{x})$ is the spherical distance between $\mathbf{x}$ and the constraint set $\mathcal{U}_i$, and is obtained by replacing $\mathcal{U}$ with $\mathcal{U}_i$ in \eqref{spherical_distance_definition}.
\end{definition}

\begin{definition}\label{definition:S_set}Given $\delta > 0$ and a constraint set $\mathcal{U}_i, i\in\mathbb{I}$, the spherical cone $\mathcal{S}_{\mathcal{D}_i^{\delta}}(\mathbf{x}_d)$ over the set $\mathcal{D}_i^{\delta}$ with respect to $\mathbf{x}_d$ is the collection of all geodesics $\mathcal{G}(\mathbf{x}, \mathbf{x}_d)$ connecting any $\mathbf{x}\in\mathcal{D}_i^{\delta}$ to $\mathbf{x}_d$, and is defined as
\begin{equation}\label{spherical_cone_over_a_set}
\mathcal{S}_{\mathcal{D}_i^{\delta}}(\mathbf{x}_d) = \bigcup_{\mathbf{x}\in\mathcal{D}_i^{\delta}}\mathcal{G}(\mathbf{x}, \mathbf{x}_d).
\end{equation}\end{definition}

\begin{figure}[t]
    \centering
    \includegraphics[width=0.7\linewidth]{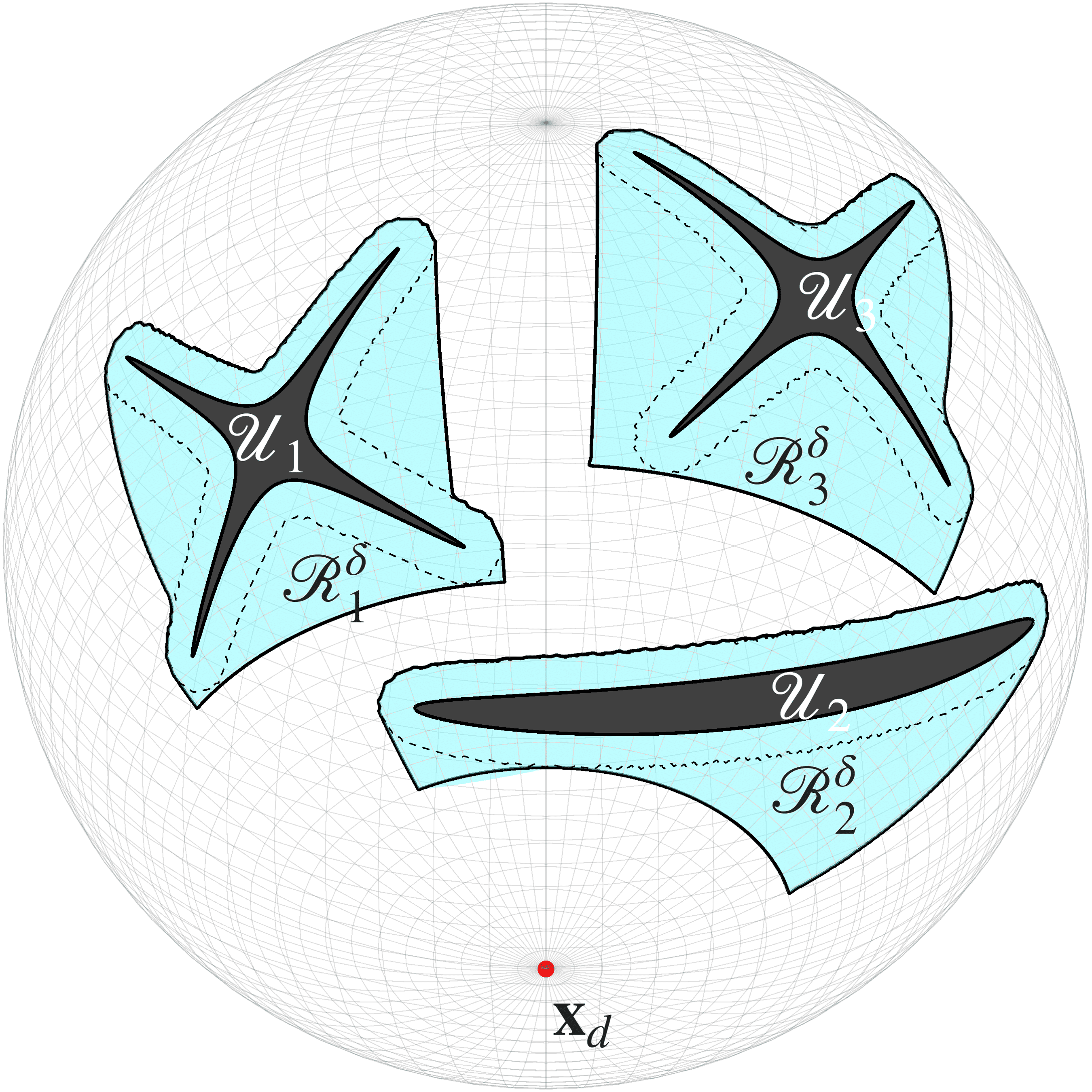}
    \caption{Example of three constraint sets satisfying Assumption \ref{assumption:sufficient_separation}. Note that the regions $\mathcal{R}_i^\delta$ do not represent unsafe regions; they remain part of the feasible state space that the system trajectories may freely enter. 
    %Their mutual separation provides a sufficient geometric condition to guarantee the absence of closed trajectories away from the target point $\mathbf{x}_d$.
    }\label{fig:sufficient_separation_representation}
\end{figure}

We assume the knowledge of $\delta > 0$ such that the target point $\mathbf{x}_d$ and its antipode $-\mathbf{x}_d$ do not belong to  $\mathcal{N}^{\delta}$,
where $\mathcal{N}^{\delta}$ is $\delta$-neighborhood of $\mathcal{U}$, and is obtained by replacing $\delta_u$ with $\delta$ in \eqref{neighborhood_definition}.
Furthermore, for each $i\in\mathbb{I}$, we characterize the regions $\mathcal{R}_i^{\delta}$ based on the position $-\mathbf{x}_d$ as follows:
\begin{itemize}
    \item {\bf Case 1 $(-\mathbf{x}_d\notin\mathcal{U}_i)$:}
    \begin{equation*}
    \mathcal{R}_i^{\delta} = \left\{\mathbf{x}\in\mathcal{S}_{\mathcal{D}_i^{\delta}}(\mathbf{x}_d)\setminus\mathcal{U}_i\mid d_{\{\mathbf{x}_d\}}(\mathbf{x})\geq d_{\mathcal{D}_i^{\delta}}(\mathbf{x}_d)\right\},
\end{equation*}
where $d_{\{\mathbf{x}_d\}}(\mathbf{x})$ is the spherical distance between $\mathbf{x}$ and $\mathbf{x}_d$, and is obtained by replacing $\mathcal{U}$ with $\{\mathbf{x}_d\}$ in \eqref{spherical_distance_definition}.
Similarly, $d_{\mathcal{D}_i^{\delta}}(\mathbf{x}_d)$ is the spherical distance between $\mathbf{x}_d$ and $\mathcal{D}_i^{\delta}$, and is obtained by replacing $\mathcal{U}$ with $\mathcal{D}_i^{\delta}$ in \eqref{spherical_distance_definition}.

\item {\bf Case 2 $(-\mathbf{x}_d\in\mathcal{U}_i)$:}
\begin{equation*}
    \mathcal{R}_i^{\delta} = \mathcal{S}_{\mathcal{D}_i^{\delta}}(-\mathbf{x}_d)\setminus\mathcal{U}_i,
\end{equation*}
where $\mathcal{S}_{\mathcal{D}_i^{\delta}}(-\mathbf{x}_d)$ is obtained by replacing $\mathbf{x}_d$ with $-\mathbf{x}_d$ in \eqref{spherical_cone_over_a_set}.
\end{itemize}

We assume that the sets $\mathcal{R}_i^{\delta}$ and $\mathcal{R}_j^{\delta}$ are mutually exclusive for all $i, j\in\mathbb{I}$ with $i\ne j$, as illustrated in Fig. \ref{fig:sufficient_separation_representation}.
The above-mentioned requirements are formally stated in the next assumption.
\begin{assumption}\label{assumption:sufficient_separation}
    There exists a known scalar $\delta > 0$ such that $\{\mathbf{x}_d, -\mathbf{x}_d\}\notin\mathcal{N}^{\delta}$ and $\mathcal{R}_i^{\delta}\cap\mathcal{R}_j^{\delta} = \emptyset$ for all $i, j\in\mathbb{I}$ with $i\ne j$.
\end{assumption}

\begin{remark}\label{remark:assumption_satisfaction}Since $\mathcal{D}_i^{\delta} \subset \mathcal{R}_i^{\delta}$ for each $i \in \mathbb{I}$, Assumption \ref{assumption:sufficient_separation} implies that $\mathcal{D}_i^{\delta} \cap \mathcal{D}_j^{\delta} = \emptyset$ for all $i, j \in \mathbb{I}$ with $i \ne j$. 
This disjointness directly ensures the satisfaction of \cite[Assumption 1]{sawant2025constrained}. 
Furthermore, because the construction of the regions $\mathcal{R}_i^{\delta}$ is analogous to the sets $\mathcal{R}_i$ defined in \cite{sawant2025constrained}, Assumption \ref{assumption:sufficient_separation} inherently guarantees the satisfaction of the separation condition in \cite[Assumption 2]{sawant2025constrained}.\end{remark}

Since $\mathcal{D}_i^{\delta} \cap \mathcal{D}_j^{\delta} = \emptyset,\;\forall i, j \in \mathbb{I}, i \ne j$, using Assumption \ref{assumption:twice_differentiability}, one can show that there exists $\delta_u \in (0, \delta)$ such that, for each $i\in\mathbb{I}$ and for all $\mathbf{x} \in \mathcal{N}_i^{\delta_u}$, the minimizer \begin{equation}\label{unique_minimizer}\Pi_{\mathcal{U}_i}(\mathbf{x}) = \underset{\mathbf{a}\in\mathcal{U}_i}{\arg\min}\;\arccos(\mathbf{x}^\top\mathbf{a})\end{equation} is unique, where the set $\mathcal{N}_i^{\delta_u}$ is obtained by replacing $\delta$ with $\delta_u$ in \eqref{individual_neighborhood}.
Consequently, by virtue of Danskin's theorem \cite[Proposition B.25]{bertsekas1997nonlinear}, the spherical distance function $d_{\mathcal{U}}(\cdot)$ defined in \eqref{spherical_distance_definition} is continuously differentiable over $\mathcal{N}^{\delta_u} = \bigcup_{i\in\mathbb{I}}\mathcal{N}_i^{\delta_u}$ and satisfies Property \ref{property:distance2}.

For each $i\in\mathbb{I}$ and for every $\mathbf{x}\in\mathcal{N}_i^{\delta_u}$, define 
\begin{equation}\label{unit_normal_vector_definition}
    \mathbf{n}_i(\mathbf{x}) = \frac{\mathbf{P}(\mathbf{x})(\mathbf{x} - \Pi_{\mathcal{U}_i}(\mathbf{x}))}{\|\mathbf{P}(\mathbf{x})(\mathbf{x} - \Pi_{\mathcal{U}_i}(\mathbf{x}))\|}
\end{equation}
as the unit vector at $\mathbf{x}$ pointing away from $\mathcal{U}_i$.
We assume that for each $i\in\mathbb{I}$, there exist $\mathbf{g}_i\in\mathcal{U}_i$ and $\delta_1\in(0, \delta_u]$ such that for all $\mathbf{x}\in\mathcal{N}_i^{\delta_1}$, the minimum non-negative angle between the unit vector $\mathbf{n}_{i}(\mathbf{x})$ and the vector $\mathbf{P}(\mathbf{x})(\mathbf{x}-\mathbf{g}_i)$ is bounded from above by an angle strictly less than $\frac{\pi}{2}$ rad.
This condition is formally stated in the following assumption:
\begin{assumption}\label{assumption:unit_normal}
    For each $i\in\mathbb{I}$, there exist $\delta_1\in(0, \delta_u]$, $\mu_i > 0$ and $\mathbf{g}_i\in\sigma(\mathcal{U}_i)\cap\mathcal{U}_i^{\circ}$ such that
    \[
    \mathbf{n}_i(\mathbf{x})^\top\mathbf{P}(\mathbf{x})(\mathbf{x} - \mathbf{g}_i) \geq \mu_i > 0, \;\forall\mathbf{x}\in\mathcal{N}_i^{\delta_1},
    \]
    where the set $\sigma(\mathcal{U}_i)$ is defined in \eqref{sigma_set} and the vector $\mathbf{n}_i(\mathbf{x})$ is defined in \eqref{unit_normal_vector_definition}.
\end{assumption}
Geometrically, Assumption \ref{assumption:unit_normal} implies that for any $i\in\mathbb{I}$ and $\mathbf{x}\in\partial\mathcal{U}_i$, the geodesic $\mathcal{G}(\mathbf{x}, \mathbf{g}_i)$ connecting $\mathbf{x}$ to $\mathbf{g}_i$ intersects the boundary $\partial\mathcal{U}_i$ only at $\mathbf{x}$.
In other words, for any $i\in\mathbb{I}$ and $\mathbf{x}\in\partial\mathcal{U}_i$, $\mathcal{G}(\mathbf{x}, \mathbf{g}_i)\cap\partial\mathcal{U}_i = \{\mathbf{x}\}$.
Assumption \ref{assumption:unit_normal} allows us to conclude that the desired vector field $\boldsymbol{\nu}_d(\cdot)$, defined next, satisfies Property \ref{condition:Assump2:moveaway}.

Inspired by \cite[Section V]{sawant2025constrained}, we construct the desired vector field
$\boldsymbol{\nu}_d(\cdot)$ as
\begin{equation}\label{nu_d}
\boldsymbol{\nu}_d(\mathbf{x}) = \mathbf{P}(\mathbf{x}) \mathbf{v}_d(\mathbf{x}), \quad \forall \mathbf{x}\in\overline{\mathcal{M}},
\end{equation}
where the vector field $\mathbf{v}_d(\cdot)$ is defined as
\begin{equation}\label{ideal_kinematic_planner_example}
    \mathbf{v}_d(\mathbf{x}) = \begin{cases}
        k_1\mathbf{v}_{d, i}(\mathbf{x}), & \mathbf{x}\in\overline{\mathcal{N}_i^{\epsilon}}, \\
        k_1\mathbf{x}_d, & \mathbf{x}\in\mathcal{M}\setminus\overline{\mathcal{N}^{\epsilon}},
    \end{cases}
\end{equation}
where $k_1 > 0$ and $\epsilon \in (0, \delta_u)$.
The set $\mathcal{N}_i^{\epsilon}$ is obtained by replacing $\delta$ with $\epsilon$ in \eqref{individual_neighborhood}. 
Similarly, $\mathcal{N}^{\epsilon}$ is obtained by replacing $\delta_u$ with $\epsilon$ in \eqref{neighborhood_definition}.
For each $i\in\mathbb{I}$, $\mathbf{v}_{d, i}(\mathbf{x})$ is defined as
\begin{equation}\label{ideal_kinematic_planner_individual}
    \mathbf{v}_{d, i}(\mathbf{x}) = \alpha(d_{\mathcal{U}_i}(\mathbf{x}))\mathbf{x}_d - \frac{1}{\kappa}\left(1 - \alpha(d_{\mathcal{U}_i}(\mathbf{x}))\right)\mathbf{g}_i,
\end{equation}
where $\kappa > 0$, and the spherical distance function $d_{\mathcal{U}_i}(\cdot)$ is obtained by replacing $\mathcal{U}$ with $\mathcal{U}_i$ in \eqref{spherical_distance_definition}.
The scalar mapping $\alpha:[0, \epsilon]\to[0, 1]$ is twice continuously differentiable, strictly increasing and satisfies $\alpha(0) = \alpha'(0) = \alpha''(0) = 0, \alpha(\epsilon) = 1, \alpha'(\epsilon) = \alpha''(\epsilon) =0$.\footnote{An example of such a function is $\alpha(p) = 6s(p)^5-15s(p)^4+10s(p)^3$, where $s(p) = \frac{p}{\epsilon}$. Since $\alpha'(p) = \frac{30}{\epsilon}s(p)^2(s(p)-1)^2 > 0$ for all $p\in(0, \epsilon)$, $\alpha(p)$ is strictly increasing over $[0, \epsilon]$.}

Since $\epsilon \in (0, \delta_u]$ and $\delta_u\in (0, \delta)$, it follows from \cite[Assumption 1]{sawant2025constrained} that $\overline{\mathcal{N}_i^{\epsilon}}\cap\overline{\mathcal{N}_j^{\epsilon}} = \emptyset$ for all $i, j\in\mathbb{I}, i\ne j$, where the set $\mathcal{N}_i^{\epsilon}$ is obtained by replacing $\delta$ with $\epsilon$ in \eqref{individual_neighborhood}.
Consequently, the choice of $\epsilon$ ensures that when $\mathbf{x}\in\overline{\mathcal{N}^{\epsilon}}$, there exists a unique $i\in\mathbb{I}$ such that $\mathbf{x}\in\overline{\mathcal{N}_i^{\epsilon}}$.

For the closed-loop system $\dot{\mathbf{x}} = \boldsymbol{\nu}_d(\mathbf{x})$, when $\mathbf{x}\in\mathcal{M}\setminus\overline{\mathcal{N}^{\epsilon}}$, the desired vector $\boldsymbol{\nu}_d(\mathbf{x})$ equals the attractive vector $k_1\mathbf{P}(\mathbf{x})\mathbf{x}_d$ and steers $\mathbf{x}$ along the geodesic $\mathcal{G}(\mathbf{x}, \mathbf{x}_d)$ towards the target point $\mathbf{x}_d$.
In contrast, when $\mathbf{x}\in\partial\mathcal{U}_i$ for some $i\in\mathbb{I}$, the desired vector $\boldsymbol{\nu}_d(\mathbf{x})$ equals the repulsive vector $-\frac{k_1}{\kappa}\mathbf{P}(\mathbf{x})\mathbf{g}_i$ and guides $\mathbf{x}$ along the geodesic $\mathcal{G}(\mathbf{x}, -\mathbf{g}_i)$ towards $-\mathbf{g}_i$, away from the constraint set $\mathcal{U}_i$.
Additionally, when $\mathbf{x}\in\mathcal{N}_i^{\epsilon}$ for some $i\in\mathbb{I}$, the desired vector $\boldsymbol{\nu}_d(\mathbf{x})$ is a convex combination of the attractive vector and the repulsive vector.

\begin{remark}\label{remark:continuously_differentiable_vector_field_planner}
Since $\boldsymbol{\nu}_d(\cdot)$ is constant for all $\mathbf{x}\in\mathcal{M}\setminus\overline{\mathcal{N}^{\epsilon}}$, it is twice continuously differentiable over $\mathcal{M}\setminus\overline{\mathcal{N}^{\epsilon}}$.
Since $\epsilon \in (0, \delta_u]$ and $\delta_u \in (0, \delta)$, by Assumption \ref{assumption:unit_normal}, $\mathcal{N}_i^{\epsilon}\cap\mathcal{N}_j^{\epsilon} = \emptyset$ for all $i, j\in\mathbb{I}, i\ne j$.
Therefore, using Assumption \ref{assumption:twice_differentiability}, one can verify that $d_{\mathcal{U}}(\cdot)$ defined in \eqref{spherical_distance_definition} is twice continuously differentiable over $\mathcal{N}^{\epsilon}$.
Consequently, $\boldsymbol{\nu}_d(\cdot)$ is twice continuously differentiable for all $\mathbf{x}\in\left(\mathcal{N}^{\epsilon}\right)^{\circ}$.
Additionally, the scalar function $\alpha(d_{\mathcal{U}_i}(\cdot))$ used in \eqref{ideal_kinematic_planner_individual} is constructed to be twice continuously differentiable at points where $d_{\mathcal{U}_i}(\mathbf{x}) = \epsilon$. 
Therefore, the desired vector field $\boldsymbol{\nu}_d(\cdot)$ defined in \eqref{ideal_kinematic_planner_example} is twice continuously differentiable on $\mathcal{M}$.

The matrix $\mathbf{J}_d(\mathbf{x}) = \frac{\partial\boldsymbol{\nu}_d(\mathbf{x})}{\partial\mathbf{x}}$, which is used in \eqref{proposed_feedback_control_input}, is given by
\begin{equation*}
    \mathbf{J}_d(\mathbf{x}) = \mathbf{P}(\mathbf{x})\frac{\partial\mathbf{v}_d(\mathbf{x})}{\partial\mathbf{x}} -\mathbf{x}\mathbf{v}_d(\mathbf{x})^\top  - \mathbf{x}^\top\mathbf{v}_d(\mathbf{x})\mathbf{I}_{n+1},
\end{equation*}
with
\begin{equation*}
    \frac{\partial\mathbf{v}_d(\mathbf{x})}{\partial\mathbf{x}} = \begin{cases}
        \frac{-k_1\alpha'(d_{\mathcal{U}_i})}{\sin(d_{\mathcal{U}_i})}\left(\mathbf{x}_d + \frac{\mathbf{g}_i}{\kappa}\right)\Pi_{\mathcal{U}_i}(\mathbf{x})^\top, & \mathbf{x}\in\mathcal{N}_i^{\epsilon},\\
        \mathbf{O}_{n+1}, & \mathbf{x}\in\mathcal{M}\setminus\mathcal{N}^{\epsilon},
    \end{cases}
\end{equation*}
where $\Pi_{\mathcal{U}_i}(\mathbf{x})$ is the unique closest point to $\mathbf{x}$ on $\mathcal{U}_i$ defined in \eqref{unique_minimizer}.
\end{remark}

Using arguments similar to the proof of \cite[Theorem 2]{sawant2025constrained}, one can show that for the closed-loop system \eqref{ideal_kinematics}-\eqref{ideal_kinematic_planner_example} under Assumption \ref{assumption:sufficient_separation}, $\overline{\mathcal{M}}$ is forward invariant.
Moreover, there exists $\bar{\kappa} > 0$ such that for any $\kappa > \bar{\kappa}$, the desired point $\mathbf{x}_d\in\mathcal{M}$ is almost globally asymptotically stable over $\overline{\mathcal{M}}$.
Furthermore, under Assumptions \ref{assumption:twice_differentiability}, \ref{assumption:sufficient_separation}, and \ref{assumption:unit_normal}, one can verify that the vector field $\boldsymbol{\nu}_d(\cdot)$ satisfies Properties \ref{condition:Assump2:moveaway}-\ref{property:bounded_jacobian}.

\section{Application to constrained attitude stabilization}\label{section:application}
\label{section:full-attitude}

\subsection{Reduced attitude control using the rotation matrix}\label{section:reduced-attitude}
Let $\mathbf{x} = \mathbf{R}^\top\mathbf{e}_3\in\mathbb{S}^2$ be the pointing direction of a rigid body, corresponding to the inertial direction $\mathbf{e}_3 = [0, 0, 1]^\top$ in the body frame.
The matrix $\mathbf{R}\in\mathrm{SO}(3)$ represents the orientation of the rigid body attached frame with respect to the inertial frame.
The control objective is to align the pointing direction $\mathbf{x}$ with the desired direction $\mathbf{x}_d\in\mathbb{S}^2$ while avoiding unsafe directions represented as constraint sets on $\mathbb{S}^2$.
The rigid body dynamics are given by 
\begin{equation}\label{rigid_body_dynamics_SO3}
    \begin{aligned}
        \dot{\mathbf{R}} &= \mathbf{R}\boldsymbol{\omega}^{\times},\\
        \mathbf{J}_{m}\dot{\boldsymbol{\omega}}&=-\boldsymbol{\omega}\times\mathbf{J}_{m}\boldsymbol{\omega} + \boldsymbol{\tau},
    \end{aligned}
\end{equation}
where $\boldsymbol{\omega}\in\mathbb{R}^3$ is the angular velocity of the rigid body in the body frame and $\boldsymbol{\omega}^{\times}\in\mathbb{R}^{3\times 3}$ is a skew symmetric matrix associated with $\boldsymbol{\omega}$ such that $\boldsymbol{\omega}^{\times}\mathbf{y} = \boldsymbol{\omega}\times\mathbf{y}$ for any $\boldsymbol{\omega}, \mathbf{y}\in\mathbb{R}^3$ with $\times$ being the vector cross product.
The matrix $\mathbf{J}_m\in\mathbb{R}^{3\times3}$ is a constant positive definite inertia matrix, and $\boldsymbol{\tau}\in\mathbb{R}^3$ is the torque control input.

Taking the time derivative of $\mathbf{x} = \mathbf{R}^\top\mathbf{e}_3$ and using \eqref{rigid_body_dynamics_SO3}, one obtains the reduced attitude dynamics as follows:
\begin{equation}\label{reduced_rigid_body_dynamics_SO3}
    \begin{aligned}
        \dot{\mathbf{x}} &= \mathbf{x}^{\times}\boldsymbol{\omega},\\
        \mathbf{J}_{m}\dot{\boldsymbol{\omega}}&=-\boldsymbol{\omega}\times\mathbf{J}_{m}\boldsymbol{\omega} + \boldsymbol{\tau}.
    \end{aligned}
\end{equation}

Defining $\mathbf{v}:= \mathbf{x}^{\times}\boldsymbol{\omega}$ and using the fact that $\mathbf{P}(\mathbf{x})\mathbf{x}^{\times}\boldsymbol{\omega}=\mathbf{x}^{\times}\boldsymbol{\omega}$, one has $\dot{\mathbf{x}}=\mathbf{P}(\mathbf{x})\mathbf{v}$. Differentiating $\mathbf{v}$ with respect to time, one obtains
\begin{equation}\label{vdot_expression}
\begin{array}{rcl}
    \dot{\mathbf{v}}&=&  - \|\boldsymbol{\omega}\|^2\mathbf{x} + \mathbf{x}^\top\boldsymbol{\omega}\boldsymbol{\omega}+ \mathbf{x}^{\times}\dot{\boldsymbol{\omega}}\\
    %~&=&(\boldsymbol{\omega}^{\times})^2 \mathbf{x}+\mathbf{x}^{\times}\dot{\boldsymbol{\omega}}\\
     ~&=&(\boldsymbol{\omega}^{\times})^2 \mathbf{x}+\mathbf{x}^{\times}(-\mathbf{J}_m^{-1}(\boldsymbol{\omega}\times\mathbf{J}_{m}\boldsymbol{\omega}) + \mathbf{J}_m^{-1}\boldsymbol{\tau}).\\
    \end{array}
\end{equation}
Taking $\boldsymbol{\tau}=\boldsymbol{\omega} \times \mathbf{J}_m \boldsymbol{\omega}+ \mathbf{J}_m \bar{\boldsymbol{\tau}}$, one gets
\begin{equation}\label{vdot_expression2}
    \dot{\mathbf{v}}=(\boldsymbol{\omega}^{\times})^2 \mathbf{x}+\mathbf{x}^{\times}\bar{\boldsymbol{\tau}}.\\
\end{equation}
Consequently, system \eqref{reduced_rigid_body_dynamics_SO3} can be rewritten as \eqref{dynamics_motion_model_on_sphere} by defining  $\mathbf{v} := \mathbf{x}^{\times}\boldsymbol{\omega}$ and
\begin{equation}
\mathbf{u}:=(\boldsymbol{\omega}^{\times})^2 \mathbf{x}+\mathbf{x}^{\times}\bar{\boldsymbol{\tau}}.
\end{equation}
From the last equation, one has
\begin{equation}\label{expression1}
\mathbf{x}^\times \bar{\boldsymbol{\tau}}= \mathbf{u}-(\boldsymbol{\omega}^{\times})^2 \mathbf{x},
\end{equation}
where $\mathbf{u}$ is given in \eqref{proposed_feedback_control_input}. Now let us rewrite $\bar{\boldsymbol{\tau}}$ as
\begin{equation}
\bar{\boldsymbol{\tau}}=\bar{\boldsymbol{\tau}}^\bot+\bar{\boldsymbol{\tau}}^\parallel,
\end{equation}
where $\bar{\boldsymbol{\tau}}^\bot=\mathbf{P}(\mathbf{x})\bar{\boldsymbol{\tau}}$ and $\bar{\boldsymbol{\tau}}^\parallel= \mathbf{x}\mathbf{x}^\top \bar{\boldsymbol{\tau}}$.
Multiplying \eqref{expression1} by $-\mathbf{x}^{\times}$ on both sides and using the fact $-(\mathbf{x}^{\times})^2 = \mathbf{P}(\mathbf{x})$, one gets
\begin{equation}\label{compare1}
 \begin{array}{rcl}
    \bar{\boldsymbol{\tau}}^\bot &=& \mathbf{x}^{\times}(\boldsymbol{\omega}^{\times})^2\mathbf{x} + \mathbf{u}_r\\
    ~&=&\mathbf{x}^{\times}(\boldsymbol{\omega} \boldsymbol{\omega}^\top)\mathbf{x} + \mathbf{u}_r
 \end{array}   
\end{equation}
where $\mathbf{u}_r = -\mathbf{x}\times\mathbf{u}$, which can be rewritten using  \eqref{proposed_feedback_control_input} as
\begin{equation*}%\label{control_u_RSO3}
    \mathbf{u}_r = -k_d\beta(d_{\mathcal{U}}(\mathbf{x}))(\mathbf{P}(\mathbf{x})\boldsymbol{\omega} +\mathbf{x}^{\times}\boldsymbol{\nu}_d(\mathbf{x})) - \mathbf{x}^{\times}\mathbf{J}_d(\mathbf{x})\mathbf{x}^{\times}\boldsymbol{\omega},
\end{equation*}
where the scalar function $\beta(\cdot)$ is defined in \eqref{beta_function_definition}, $\boldsymbol{\nu}_d(\mathbf{x})$ is defined in \eqref{nu_d} and $\mathbf{J}_d(\mathbf{x})=\frac{\partial\boldsymbol{\nu}_d}{\partial\mathbf{x}}$.\\
Now, let us proceed with the design of $\bar{\boldsymbol{\tau}}^\parallel$. Let $\boldsymbol{\omega} = \boldsymbol{\omega}^{\perp} + \boldsymbol{\omega}^{\parallel}$, with $\boldsymbol{\omega}^{\perp}=\mathbf{P}(\mathbf{x})\boldsymbol{\omega}$ and $\boldsymbol{\omega}^{\parallel}=\mathbf{x}\mathbf{x}^\top \boldsymbol{\omega}$. Under the control $\bar{\boldsymbol{\tau}}^\bot$, the orthogonal component $\boldsymbol{\omega}^{\perp}$ tends to zero as $t$ tends to infinity. For the sake of simplicity\footnote{ Note that $\bar{\boldsymbol{\tau}}^\parallel$ can also be designed to guarantee that $\boldsymbol{\omega}^\parallel$ tends to some desired value $\boldsymbol{\omega}_d^\parallel$ other than zero.}, we will design $\bar{\boldsymbol{\tau}}^\parallel$ to guarantee that $\boldsymbol{\omega}^\parallel$, whose dynamics is given by
\begin{equation}\label{omega_parallel_dynamics}
\begin{array}{rcl}
\dot{\boldsymbol{\omega}}^\parallel&=&-\boldsymbol{\omega}^\times \mathbf{x} \mathbf{x}^\top \boldsymbol{\omega} +\mathbf{x}\mathbf{x}^\top \bar{\boldsymbol{\tau}}^\parallel\\
~&=& (\boldsymbol{\omega}^\parallel)^\times\boldsymbol{\omega}^\bot+\mathbf{x}\mathbf{x}^\top \bar{\boldsymbol{\tau}}^\parallel
\end{array}
\end{equation}
tends to zero as $t$ tends to infinity. 
Consider $V_{\omega} = \frac{1}{2} \|\boldsymbol{\omega}^{\parallel}\|^2$.
Differentiating $V_{\omega}$ with respect to time, along the dynamics of \eqref{omega_parallel_dynamics}, one obtains
\begin{equation*}
    \dot{V}_{\omega} = (\boldsymbol{\omega}^{\parallel})^\top\mathbf{x}\mathbf{x}^\top\bar{\boldsymbol{\tau}}^{\parallel}.
\end{equation*}
Taking $\bar{\boldsymbol{\tau}}^{\parallel} = -\gamma\boldsymbol{\omega}^{\parallel}$ with $\gamma > 0$ yields 
\begin{equation*}
\dot{V}_{\omega} = -\gamma(\mathbf{x}^\top\boldsymbol{\omega}^{\parallel})^2 = -\gamma\|\boldsymbol{\omega}^{\parallel}\|^2 \leq 0,
\end{equation*}
which implies that the equilibrium $\boldsymbol{\omega}^{\parallel}=\mathbf{0}_3$ is stable for the closed-loop system \eqref{omega_parallel_dynamics} with $\bar{\boldsymbol{\tau}}^{\parallel} = -\gamma\boldsymbol{\omega}^{\parallel}$. Since $V_{\omega}$ is lower bounded and $\dot{V}_{\omega} \leq 0$, it follows that $\Lim_{t\to\infty}V_{\omega}$ exists.
Furthermore, it can be verified that $\ddot{V}_{\omega}$ is bounded, which implies that $\dot{V}_{\omega}$ is uniformly continuous.
Therefore, by virtue of Barbalat's lemma, $\Lim_{t\to\infty}\dot{V}_{\omega} = 0$, which ensures that $\Lim_{t\to\infty}\boldsymbol{\omega}^{\parallel} = \mathbf{0}_3$.

Finally, the control torque is given by
\begin{equation}\label{torque_control_RSO3}
\boldsymbol{\tau}=\boldsymbol{\omega} \times \mathbf{J}_m \boldsymbol{\omega}+ \mathbf{J}_m \left( \mathbf{x}^{\times}(\boldsymbol{\omega} \boldsymbol{\omega}^\top)\mathbf{x} + \mathbf{u}_r - \gamma\mathbf{x}\mathbf{x}^\top \boldsymbol{\omega}\right).
\end{equation}

%{\color{magenta}Since we are referring to the desired vector field $\boldsymbol{\nu}_d$ defined in \eqref{nu_d}, I included Assumptions \ref{assumption:unit_normal} and \cite[Assumptions 1 and 2]{sawant2025constrained} in the proposition statements}

\begin{proposition}\label{prop:SO3_system}
Consider system \eqref{reduced_rigid_body_dynamics_SO3} with the control input \eqref{torque_control_RSO3}. 
Under Assumptions \ref{assumption:twice_differentiability}, \ref{assumption:sufficient_separation} and \ref{assumption:unit_normal}, the following statements hold:
%{\color{red} So Assumption 2 is not needed?????.....The reader needs to go check this  \cite[Assumptions 1 and 2]{sawant2025constrained}?????? {\color{blue}Since we are using the desired vector field $\boldsymbol{\nu}_d$ defined in \eqref{nu_d}, we require \cite[Assumptions 1 and 2]{sawant2025constrained}, as under these restrictions I was able to prove AGAS of $\mathbf{x}_d$ for $\dot{\mathbf{x}} = \boldsymbol{\nu}_d(\mathbf{x})$. However, as you correctly pointed out, the inefficiency in asking readers to go check the other paper, I tried to summarize these assumptions in Assumption \ref{assumption:sufficient_separation}. So instead of referring to \cite[Assumptions 1 and 2]{sawant2025constrained}, I am referring to Assumption \ref{assumption:sufficient_separation}.
%Please let me know if you find any inconsistencies in this approach. Thank you.}}
    \begin{enumerate}
        \item \label{claim1:propSO3} The set $\mathcal{M} \times \mathbb{R}^3=\{(\mathbf{x}, \boldsymbol{\omega})~|~\mathbf{x}\in \mathcal{M}\subset\mathbb{S}^2, \boldsymbol{\omega}\in\mathbb{R}^3\}$ is forward invariant.
        %\item \label{claim2:propSO3} $\|\mathbf{P}(\mathbf{x}(t))(\boldsymbol{\omega}(t) - \boldsymbol{\omega}_r(\mathbf{x}(t)))\|$ is monotonically decreasing for all $t\geq 0$, where $\boldsymbol{\omega}_r$ is defined in \eqref{reduced_desired_angular_velocity}.
        %\item \label{claim3:propSO3} The equilibrium set is given by $(\{\mathbf{x}_d\}\cup\mathcal{E})\times\mathcal{S}_w$, where $\mathcal{E}$ is defined in Property \ref{condition:set_of_equilibria} and $\mathcal{S}_w$ is given in \eqref{set_S_omega}
        \item \label{claim3:propSO3} The set of equilibrium points is given by $\mathcal{S}_r = \{(\mathbf{x}, \boldsymbol{\omega})\in\mathcal{M}\times\mathbb{R}^3\mid\boldsymbol{\nu}_d(\mathbf{x}) = \mathbf{0}_3, \boldsymbol{\omega} = \mathbf{0}_3\}$, where $\boldsymbol{\nu}_d$ is defined in \eqref{nu_d}.
        \item \label{claim4:propSO3} The equilibrium point $(\mathbf{x}, \boldsymbol{\omega})=(\mathbf{x}_d, \mathbf{0}_3)$ is almost globally asymptotically stable over $\mathcal{M} \times \mathbb{R}^3$.
        %\item If $\gamma$ in \eqref{reduced_torque_control_inputs} is designed as 
        %\begin{equation*}
        %    \gamma(\mathbf{x}, \boldsymbol{\omega}) = - k_{\gamma}(\mathbf{x}^\top\boldsymbol{\omega} - \omega_{\mathrm{ref}}),
        %\end{equation*}
        %where $k_{\gamma} > 0$ and $\omega_{\mathrm{ref}}\in\mathbb{R}$, then the equilibrium point $(\mathbf{x}_d, \omega_{\mathrm{ref}}\mathbf{x}_d)$ is %almost globally asymptotically stable over $\mathcal{M}\times\mathbb{R}^3$.
    \end{enumerate}
\end{proposition}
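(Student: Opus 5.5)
The plan is to reduce Proposition \ref{prop:SO3_system} to Theorem \ref{theorem:VTF}, treating the $\boldsymbol{\omega}^\parallel$ subsystem as a decoupled, exponentially decaying component. The first step is to check that, under the torque \eqref{torque_control_RSO3}, the pair $(\mathbf{x},\mathbf{v})$ with $\mathbf{v}=\mathbf{x}^\times\boldsymbol{\omega}$ obeys exactly \eqref{dynamics_motion_model_on_sphere} with $\mathbf{u}$ given by \eqref{proposed_feedback_control_input}.

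For this, substitute $\bar{\boldsymbol{\tau}}=\mathbf{x}^\times(\boldsymbol{\omega}\boldsymbol{\omega}^\top)\mathbf{x}+\mathbf{u}_r-\gamma\mathbf{x}\mathbf{x}^\top\boldsymbol{\omega}$ into \eqref{vdot_expression2}. The parallel part is annihilated by $\mathbf{x}^\times$. Using $\mathbf{x}^\times\mathbf{x}^\times=-\mathbf{P}(\mathbf{x})$ one obtains $\mathbf{x}^\times\bar{\boldsymbol{\tau}}=\mathbf{P}(\mathbf{x})\mathbf{u}-\mathbf{P}(\mathbf{x})(\boldsymbol{\omega}^\times)^2\mathbf{x}$. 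Hence $\dot{\mathbf{v}}=\mathbf{P}(\mathbf{x})\mathbf{u}+\mathbf{x}\mathbf{x}^\top(\boldsymbol{\omega}^\times)^2\mathbf{x}$.

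This does not match $\mathbf{u}$ exactly. The radial part of $\mathbf{u}$ is $-\mathbf{x}\mathbf{x}^\top\mathbf{J}_d\mathbf{P}\mathbf{v}$, since $\mathbf{x}^\top\mathbf{v}=\mathbf{x}^\top\boldsymbol{\nu}_d=0$. By contrast, $\mathbf{x}^\top(\boldsymbol{\omega}^\times)^2\mathbf{x}=-\|\mathbf{v}\|^2$. So the radial components must be reconciled. I would work with $V=\tfrac12\|\mathbf{v}-\boldsymbol{\nu}_d\|^2$. Since $\mathbf{v}-\boldsymbol{\nu}_d\in\mathsf{T}_{\mathbf{x}}\mathbb{S}^2$, only the tangential part of $\dot{\mathbf{v}}-\dot{\boldsymbol{\nu}}_d$ enters $\dot V$. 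The tangential part is $\mathbf{P}\mathbf{u}-\mathbf{P}\mathbf{J}_d\mathbf{P}\mathbf{v}=-k_d\beta(\mathbf{v}-\boldsymbol{\nu}_d)$, which gives $\dot V=-2k_d\beta V\le0$.

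Because $\mathbf{v}$ is tangent to the sphere by construction, this radial mismatch is irrelevant. The appendix arguments behind Lemma \ref{lemma:safety_lemma} and Theorem \ref{theorem:VTF} use only $\dot{\mathbf{x}}=\mathbf{P}\mathbf{v}$ and the decay of $V$. Claim~\ref{claim1:propSO3} then follows from Lemma \ref{lemma:safety_lemma}, since $\boldsymbol{\omega}$ does not affect $d_{\mathcal{U}}$. I will also invoke Section~\ref{section:explicit_feedback_control}, which shows that $\boldsymbol{\nu}_d$ in \eqref{nu_d} satisfies Properties \ref{condition:Assump2:moveaway}–\ref{property:bounded_jacobian} under Assumptions \ref{assumption:twice_differentiability}–\ref{assumption:unit_normal}.

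For the equilibria (Claim~\ref{claim3:propSO3}), I would use the decomposition $\boldsymbol{\omega}=\boldsymbol{\omega}^\perp+\boldsymbol{\omega}^\parallel$ together with $\mathbf{v}=\mathbf{x}^\times\boldsymbol{\omega}^\perp$, so that $\boldsymbol{\omega}^\perp=-\mathbf{x}^\times\mathbf{v}$.
\begin{itemize}
\item At an equilibrium, $\dot{\mathbf{x}}=0$ forces $\mathbf{v}=0$, hence $\boldsymbol{\omega}^\perp=0$.
\item $\dot{\mathbf{v}}=0$ then forces $\mathbf{P}\mathbf{u}=k_d\beta\boldsymbol{\nu}_d=0$, so $\boldsymbol{\nu}_d(\mathbf{x})=0$.
\item The dynamics \eqref{omega_parallel_dynamics} with $\boldsymbol{\omega}^\perp=0$ reduce to $\dot{\boldsymbol{\omega}}^\parallel=-\gamma\boldsymbol{\omega}^\parallel$, so $\boldsymbol{\omega}^\parallel=0$.
\end{itemize}
The converse is immediate.

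For Claim~\ref{claim4:propSO3}, I would proceed in three steps.
\begin{itemize}
\item From the $V_\omega$ computation, $\|\boldsymbol{\omega}^\parallel\|$ decays exponentially at rate $\gamma$, independently of $\boldsymbol{\omega}^\perp$, because $(\boldsymbol{\omega}^\parallel)^\times\boldsymbol{\omega}^\perp$ is orthogonal to $\boldsymbol{\omega}^\parallel$.
\item The $(\mathbf{x},\mathbf{v})$ subsystem is autonomous and does not depend on $\boldsymbol{\omega}^\parallel$. Theorem \ref{theorem:VTF} therefore gives almost global asymptotic stability of $(\mathbf{x}_d,0)$ in $(\mathbf{x},\mathbf{v})$. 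Since $\boldsymbol{\omega}^\perp=-\mathbf{x}^\times\mathbf{v}$, we get $\boldsymbol{\omega}\to0$ whenever $\mathbf{v}\to0$.
\item Lyapunov stability follows by combining stability of the $(\mathbf{x},\mathbf{v})$ cascade with $\|\boldsymbol{\omega}^\parallel(t)\|\le\|\boldsymbol{\omega}^\parallel(0)\|$.
\end{itemize}

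The main obstacle is the measure-zero argument. The map $(\mathbf{x},\boldsymbol{\omega})\mapsto(\mathbf{x},\mathbf{v},\mathbf{x}^\top\boldsymbol{\omega})$ is a diffeomorphism onto $\{(\mathbf{x},\mathbf{v}):\mathbf{v}\in\mathsf{T}_\mathbf{x}\mathbb{S}^2\}\times\mathbb{R}$. The non-converging set is then $\mathcal{B}\times\mathbb{R}$, where $\mathcal{B}$ is the bad set of the $(\mathbf{x},\mathbf{v})$ system restricted to the tangent bundle $\mathsf{T}\mathbb{S}^2$.

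Theorem \ref{theorem:VTF} states that the bad set has measure zero in $\mathcal{M}\times\mathbb{R}^{3}$, not in the tangent bundle. Transferring this is not automatic. My first attempt would be to redo the stable-manifold argument used in Theorem \ref{theorem:VTF} directly on $\mathsf{T}\mathcal{M}$. At each $(\mathbf{x}^*,0)\in\mathcal{S}$, I would linearize the restricted dynamics. Property \ref{condition:undesired} should supply an unstable tangent eigendirection, which yields a lower-dimensional stable manifold. Taking the product with $\mathbb{R}$, together with Fubini, then gives measure zero in $\mathcal{M}\times\mathbb{R}^3$.
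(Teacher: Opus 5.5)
Your proposal is correct, and it is more careful than the paper's proof.

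\textbf{How the two routes differ.} The paper argues in one line. It asserts that under $\mathbf{v}=\mathbf{x}^{\times}\boldsymbol{\omega}$ the attitude closed loop \emph{is} the closed loop \eqref{dynamics_motion_model_on_sphere}--\eqref{proposed_feedback_control_input}. It then cites Theorem~\ref{theorem:VTF} together with the Barbalat argument for $\boldsymbol{\omega}^{\parallel}$ given before the proposition.

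As you noticed, that identification is not literal. The attitude loop gives $\dot{\mathbf{v}}=\mathbf{P}(\mathbf{x})\mathbf{u}-\|\mathbf{v}\|^2\mathbf{x}$ with $\mathbf{x}^\top\mathbf{v}\equiv 0$. Along \eqref{dynamics_motion_model_on_sphere}--\eqref{proposed_feedback_control_input}, by contrast, $\frac{d}{dt}(\mathbf{x}^\top\mathbf{v})=\mathbf{v}^\top(\mathbf{v}-\boldsymbol{\nu}_d(\mathbf{x}))$ at points where $\mathbf{x}^\top\mathbf{v}=0$. So that system does not keep the tangent bundle invariant. Likewise, Part~2 of the proof of Theorem~\ref{theorem:VTF} linearizes on $\mathsf{T}_{\mathbf{x}^*}\mathbb{S}^n\times\mathbb{R}^{n+1}$, not on $\mathsf{T}_{\mathbf{x}^*}\mathbb{S}^n\times\mathsf{T}_{\mathbf{x}^*}\mathbb{S}^n$. (Minor slip: the radial part of $\mathbf{u}$ is $+\mathbf{x}\mathbf{x}^\top\mathbf{J}_d\mathbf{P}\mathbf{v}=-(\boldsymbol{\nu}_d^\top\mathbf{v})\mathbf{x}$. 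The mismatch you point out remains.)

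The paper's route buys brevity; yours buys validity at modest cost:
\begin{itemize}
\item $\dot V=-2k_d\beta V$ is unchanged, so Lemma~\ref{lemma:safety_lemma}, Lemma~\ref{lemma:monotonic_decrease_velocity_difference} and Part~1 of the proof of Theorem~\ref{theorem:VTF} carry over verbatim.
\item The $(\mathbf{x},\mathbf{v})$ subsystem is autonomous on $\mathsf{T}\mathbb{S}^2$, because the feedforward $\mathbf{x}^{\times}(\boldsymbol{\omega}\boldsymbol{\omega}^\top)\mathbf{x}$ cancels the $(\mathbf{x}^\top\boldsymbol{\omega})\boldsymbol{\omega}^{\perp}$ part of $(\boldsymbol{\omega}^{\times})^2\mathbf{x}$.
\item $\frac{d}{dt}(\mathbf{x}^\top\boldsymbol{\omega})=-\gamma\,\mathbf{x}^\top\boldsymbol{\omega}$ holds exactly, which replaces Barbalat by explicit exponential decay.
\end{itemize}
Both arguments need $\gamma>0$. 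For $\gamma=0$, every $(\mathbf{x}^*,c\,\mathbf{x}^*)$ with $\boldsymbol{\nu}_d(\mathbf{x}^*)=\mathbf{0}_3$ is an equilibrium.

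\textbf{Two points to tighten.} First, your second bullet for Claim~\ref{claim4:propSO3} still takes stability of $(\mathbf{x}_d,\mathbf{0}_3)$ for the $(\mathbf{x},\mathbf{v})$ system from Theorem~\ref{theorem:VTF}. By your own observation, that must also come from the tangent-bundle linearization.

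Second, that linearization does go through, so the ``should'' can be removed. Take $(\mathbf{x}^*,\mathbf{0}_3)$ with $\boldsymbol{\nu}_d(\mathbf{x}^*)=\mathbf{0}_3$, and set $\beta^*:=\beta(d_{\mathcal{U}}(\mathbf{x}^*))$.
\begin{itemize}
\item Differentiating $\mathbf{x}^\top\boldsymbol{\nu}_d(\mathbf{x})\equiv 0$ along $\mathsf{T}_{\mathbf{x}^*}\mathbb{S}^2$ shows that $\mathbf{J}_d(\mathbf{x}^*)$ maps $\mathsf{T}_{\mathbf{x}^*}\mathbb{S}^2$ into itself.
\item The term $-\|\mathbf{v}\|^2\mathbf{x}$ is quadratic, so it drops out.
\item The derivative of $\mathbf{P}(\mathbf{x})$ multiplies $\mathbf{u}(\mathbf{x}^*,\mathbf{0}_3)=k_d\beta^*\boldsymbol{\nu}_d(\mathbf{x}^*)=\mathbf{0}_3$, so it also drops out.
\end{itemize}
Hence on $\mathsf{T}_{\mathbf{x}^*}\mathbb{S}^2\times\mathsf{T}_{\mathbf{x}^*}\mathbb{S}^2$ the linearization is $\delta\dot{\mathbf{x}}=\delta\mathbf{v}$, $\delta\dot{\mathbf{v}}=k_d\beta^*\mathbf{A}\,\delta\mathbf{x}+(\mathbf{A}-k_d\beta^*\mathbf{I})\,\delta\mathbf{v}$. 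Here $\mathbf{A}$ is the restriction of $\mathbf{J}_d(\mathbf{x}^*)$ to the tangent plane. Its characteristic polynomial is $(\lambda+k_d\beta^*)^2\det(\lambda\mathbf{I}-\mathbf{A})$.

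Property~\ref{condition:xd} makes this Hurwitz at $\mathbf{x}_d$. Property~\ref{condition:undesired} gives an eigenvalue with positive real part at each $\mathbf{x}^*\in\mathcal{E}$. Your diffeomorphism $(\mathbf{x},\boldsymbol{\omega})\mapsto(\mathbf{x},\mathbf{v},\mathbf{x}^\top\boldsymbol{\omega})$ together with Fubini then gives the measure-zero conclusion.
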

\proof{Under the transformation $\mathbf{v} = \mathbf{x}^{\times}\boldsymbol{\omega}$, the closed-loop system \eqref{reduced_rigid_body_dynamics_SO3}-\eqref{torque_control_RSO3} can be transformed into the closed-loop system \eqref{dynamics_motion_model_on_sphere}-\eqref{proposed_feedback_control_input}.
Therefore, the proof of Proposition \ref{prop:SO3_system} follows from Theorem \ref{theorem:VTF} and the developments preceding this proposition.}

\subsection{Full attitude control using the unit-quaternion}

The attitude of a rigid body with respect to the inertial frame can be described by a four-parameter representation, namely unit-quaternion.
To denote the unit-quaternion, we use $\mathbf{x} = [\eta, \mathbf{q}^\top]^\top\in\mathbb{S}^3$, where $\eta\in\mathbb{R}$ and $\mathbf{q}\in\mathbb{R}^3$.
The unit-quaternion-based dynamic attitude model is given by 
\begin{equation}\label{quaternion_dynamics}
    \begin{aligned}
    \dot{\mathbf{x}} &= \frac{1}{2}\mathbf{A}(\mathbf{x})\boldsymbol{\omega},\\
    \mathbf{J}_m \dot{\boldsymbol{\omega}}&=-\boldsymbol{\omega} \times \mathbf{J}_m \boldsymbol{\omega} + \boldsymbol{\tau},
    \end{aligned}
\end{equation}
where $\boldsymbol{\omega}\in\mathbb{R}^3$ is the angular velocity and $\mathbf{J}_m\in\mathbb{R}^{3\times 3}$ is a constant positive definite inertia matrix.
The matrix $\mathbf{A}(\mathbf{x})$ is defined as
\begin{equation}
    \mathbf{A}(\mathbf{x}) = \begin{bmatrix}-\mathbf{q}^\top\\\eta\mathbf{I}_3 + \mathbf{q}^{\times}\end{bmatrix},
\end{equation}
where $\mathbf{q}^{\times}\in\mathbb{R}^{3\times 3}$ is a skew symmetric matrix such that $\mathbf{q}^{\times}\mathbf{p} = \mathbf{q}\times\mathbf{p}$ for any $\mathbf{p}\in\mathbb{R}^3$ with $\times$ being the vector cross product.
%Given any initial condition $(\mathbf{x}(0), \boldsymbol{\omega}(0))\in\mathcal{M}\times\mathbb{R}^3$, the objective is to design the torque control input $\boldsymbol{\tau}\in\mathbb{R}^3$ in \eqref{quaternion_dynamics} that ensures $\mathbf{x}(t)\in\mathcal{M}, \;\forall t\geq 0$, and renders $(\mathbf{x}_d, \mathbf{0}_3)$ almost globally asymptotically stable over $\mathcal{M}\times\mathbb{R}^3$.
 Defining  $\mathbf{v} := \frac{1}{2}\mathbf{A}(\mathbf{x})\boldsymbol{\omega}$, one has 
\begin{equation}
\begin{array}{rcl}
\dot{\mathbf{v}}&=& \frac{1}{2}\dot{\mathbf{A}}(\mathbf{x})\boldsymbol{\omega}+\frac{1}{2}\mathbf{A}(\mathbf{x})\dot{\boldsymbol{\omega}}\\
~&=&-\frac{1}{4}\|\boldsymbol{\omega}\|^2\mathbf{x}+\frac{1}{2}\mathbf{A}(\mathbf{x})\dot{\boldsymbol{\omega}}\\
~&=&-\frac{1}{4}\|\boldsymbol{\omega}\|^2\mathbf{x}+\frac{1}{2}\mathbf{A}(\mathbf{x})(-\mathbf{J}_m^{-1}(\boldsymbol{\omega}\times\mathbf{J}_{m}\boldsymbol{\omega}) + \mathbf{J}_m^{-1}\boldsymbol{\tau}).
\end{array}
\end{equation}
Taking $\boldsymbol{\tau}=\boldsymbol{\omega} \times \mathbf{J}_m \boldsymbol{\omega}+ \mathbf{J}_m \bar{\boldsymbol{\tau}}$, one gets
\begin{equation}
\begin{array}{rcl}
\dot{\mathbf{v}}&=& -\frac{1}{4}\|\boldsymbol{\omega}\|^2\mathbf{x}+\frac{1}{2}\mathbf{A}(\mathbf{x})\bar{\boldsymbol{\tau}}.
\end{array}
\end{equation}
Defining $\mathbf{u}:=-\frac{1}{4}\|\boldsymbol{\omega}\|^2\mathbf{x}+\frac{1}{2}\mathbf{A}(\mathbf{x})\bar{\boldsymbol{\tau}}$, one gets $\dot{\mathbf{v}}=\mathbf{u}$. Using the facts $\mathbf{A}(\mathbf{x})^\top\mathbf{A}(\mathbf{x}) = \mathbf{I}_3$ and $\mathbf{P}(\mathbf{x}) = \mathbf{A}(\mathbf{x})\mathbf{A}(\mathbf{x})^\top$, it is clear that $\dot{\mathbf{x}}=\mathbf{P}(\mathbf{x})\mathbf{v}$. Therefore system \eqref{quaternion_dynamics} can be rewritten as \eqref{dynamics_motion_model_on_sphere} with $\mathbf{x}$, $\mathbf{v}$ and $\mathbf{u}$ as defined above.\\
Now, one can design $\mathbf{A}(\mathbf{x}) \bar{\boldsymbol{\tau}}$ as follows:
\[
\mathbf{A}(\mathbf{x}) \bar{\boldsymbol{\tau}}=\frac{1}{2}\|\boldsymbol{\omega}\|^2\mathbf{x}+2 \mathbf{u}.
\]
Multiplying the previous equality by $\mathbf{A}(\mathbf{x})^\top$ and using the facts $\mathbf{A}(\mathbf{x})^\top\mathbf{A}(\mathbf{x}) = \mathbf{I}_3$ and $\mathbf{A}(\mathbf{x})^\top \mathbf{x}=\mathbf{0_3}$, one has $\bar{\boldsymbol{\tau}}= \mathbf{u}_f$, where $\mathbf{u}_f:=2\mathbf{A}(\mathbf{x})^\top \mathbf{u}$, which can be obtained, using \eqref{proposed_feedback_control_input}, as follows:
{\small\begin{equation*}
    \mathbf{u}_{f} =  -k_d\beta(d_{\mathcal{U}}(\mathbf{x}))(\boldsymbol{\omega} - 2\mathbf{A}(\mathbf{x})^\top\boldsymbol{\nu}_d(\mathbf{x})) + \mathbf{A}(\mathbf{x})^\top\mathbf{J}_d(\mathbf{x})\mathbf{A}(\mathbf{x})\boldsymbol{\omega},
\end{equation*}}
where the scalar function $\beta(\cdot)$ is defined in \eqref{beta_function_definition}, $\boldsymbol{\nu}_d(\mathbf{x})$ is defined in \eqref{nu_d} and $\mathbf{J}_d(\mathbf{x}) = \frac{\partial\boldsymbol{\nu}_d}{\partial\mathbf{x}}$.
%and the desired angular velocity $\boldsymbol{\omega}_f(\mathbf{x})$ is given by
%\begin{equation}\label{desired_angular_velocity}
%\boldsymbol{\omega}_{f}(\mathbf{x}) = 2\mathbf{A}(\mathbf{x})^\top\boldsymbol{\nu}_d(\mathbf{x}),
%\end{equation}
%and $\boldsymbol{\nu}_d(\mathbf{x})$ is defined in \eqref{nu_d}.
Finally, the control torque is given by:
\begin{equation}\label{torque_control_input_quaternion}
    \boldsymbol{\tau} = \boldsymbol{\omega}\times\mathbf{J}_m\boldsymbol{\omega} + \mathbf{J}_{m}\mathbf{u}_{f}.
\end{equation}

\begin{proposition}\label{prop:quaternion_system}
For the closed-loop system \eqref{quaternion_dynamics}-\eqref{torque_control_input_quaternion} under Assumptions \ref{assumption:twice_differentiability}, \ref{assumption:sufficient_separation} and \ref{assumption:unit_normal}, the following statements hold:
    \begin{enumerate}
        \item \label{claim1:prop} The set $\mathcal{M} \times \mathbb{R}^3=\{(\mathbf{x}, \boldsymbol{\omega})~|~\mathbf{x}\in \mathcal{M}\subset\mathbb{S}^3, \boldsymbol{\omega}\in\mathbb{R}^3\}$ is forward invariant.
        %\item \label{claim2:prop} $\|\boldsymbol{\omega}(t) - \boldsymbol{\omega}_f(\mathbf{x}(t))\|$ is monotonically decreasing for all $t\geq 0$, where $\boldsymbol{\omega}_f$ is defined in \eqref{desired_angular_velocity}.
        \item \label{claim3:prop} The set of equilibrium points is given by $\mathcal{S}_f = \{(\mathbf{x}, \boldsymbol{\omega})\in\mathcal{M}\times\mathbb{R}^3\mid\boldsymbol{\nu}_d(\mathbf{x}) = \mathbf{0}_4, \boldsymbol{\omega} = \mathbf{0}_3\}$, where $\boldsymbol{\nu}_d$ is defined in \eqref{nu_d}.
        \item \label{claim4:prop} The equilibrium point $(\mathbf{x}_d, \mathbf{0}_3)$ is almost globally asymptotically stable over $\mathcal{M} \times \mathbb{R}^3$.
    \end{enumerate}
\end{proposition}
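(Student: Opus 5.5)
The plan is to reduce Proposition \ref{prop:quaternion_system} to Theorem \ref{theorem:VTF}, in the same way as Proposition \ref{prop:SO3_system}. The key structural fact is that the map $\boldsymbol{\omega}\mapsto\mathbf{v}=\frac{1}{2}\mathbf{A}(\mathbf{x})\boldsymbol{\omega}$ is, for each fixed $\mathbf{x}\in\mathbb{S}^3$, a linear isometry (up to the factor $\frac{1}{2}$) from $\mathbb{R}^3$ onto $\mathsf{T}_{\mathbf{x}}\mathbb{S}^3$. This follows from $\mathbf{A}(\mathbf{x})^\top\mathbf{A}(\mathbf{x})=\mathbf{I}_3$, $\mathbf{A}(\mathbf{x})^\top\mathbf{x}=\mathbf{0}_3$ and $\mathbf{P}(\mathbf{x})=\mathbf{A}(\mathbf{x})\mathbf{A}(\mathbf{x})^\top$. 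Its inverse on the tangent space is $\boldsymbol{\omega}=2\mathbf{A}(\mathbf{x})^\top\mathbf{v}$.

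\textbf{Step 1: the closed loop is an instance of the generic one.} Substituting \eqref{torque_control_input_quaternion} into \eqref{quaternion_dynamics} gives $\dot{\boldsymbol{\omega}}=\mathbf{u}_f$. I will then verify that $\mathbf{v}=\frac{1}{2}\mathbf{A}(\mathbf{x})\boldsymbol{\omega}$ satisfies $\dot{\mathbf{v}}=\mathbf{u}$ with $\mathbf{u}$ given by \eqref{proposed_feedback_control_input}.

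\emph{Complication.} The design $\mathbf{A}\bar{\boldsymbol{\tau}}=\frac{1}{2}\|\boldsymbol{\omega}\|^2\mathbf{x}+2\mathbf{u}$ is only solvable if $\mathbf{u}$ lies in the tangent space, since $\mathbf{A}\mathbf{A}^\top=\mathbf{P}$. However, \eqref{proposed_feedback_control_input} generally has a normal component: the term $\mathbf{J}_d\mathbf{P}\mathbf{v}$ contains $-\mathbf{x}\mathbf{v}_d^\top\mathbf{P}\mathbf{v}$. So I expect the closed loop to actually realize
\[
\dot{\mathbf{v}}=-\tfrac{1}{4}\|\boldsymbol{\omega}\|^2\mathbf{x}+\mathbf{P}(\mathbf{x})\mathbf{u}.
\]
The right comparison is therefore not with the full $\mathbf{v}$-trajectory but with the tangential quantities that matter.

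\emph{Resolution.} Because $\dot{\mathbf{x}}=\mathbf{P}(\mathbf{x})\mathbf{v}$, the $\mathbf{x}$-dynamics depend on $\mathbf{v}$ only through $\mathbf{P}\mathbf{v}=\mathbf{v}$ (here $\mathbf{v}$ is tangent by construction). Moreover, $\boldsymbol{\nu}_d$ is tangent, so $V=\frac{1}{2}\|\mathbf{v}-\boldsymbol{\nu}_d\|^2=\frac{1}{8}\|\boldsymbol{\omega}-2\mathbf{A}^\top\boldsymbol{\nu}_d\|^2$. Computing $\dot V$ along the quaternion closed loop, the normal components drop out, because $\mathbf{v}-\boldsymbol{\nu}_d\in\mathsf{T}_{\mathbf{x}}\mathbb{S}^3$. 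Using $\frac{d}{dt}\boldsymbol{\nu}_d=\mathbf{J}_d\mathbf{P}\mathbf{v}$, this gives
\[
\dot V=(\mathbf{v}-\boldsymbol{\nu}_d)^\top\mathbf{P}(\mathbf{u}-\mathbf{J}_d\mathbf{P}\mathbf{v})=-k_d\beta\|\mathbf{v}-\boldsymbol{\nu}_d\|^2,
\]
which is identical to the generic case.

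\textbf{Step 2: apply Theorem \ref{theorem:VTF}.} Under Assumptions \ref{assumption:twice_differentiability}, \ref{assumption:sufficient_separation}, \ref{assumption:unit_normal}, the discussion after Remark \ref{remark:continuously_differentiable_vector_field_planner} guarantees that $\boldsymbol{\nu}_d$ in \eqref{nu_d} satisfies Properties \ref{condition:Assump2:moveaway}--\ref{property:bounded_jacobian}, and that $d_{\mathcal{U}}$ satisfies Properties \ref{property:distance1}--\ref{property:distance2}. I will argue that the proofs of Lemma \ref{lemma:safety_lemma} and Theorem \ref{theorem:VTF} use only two ingredients: the $\mathbf{x}$-equation and the identity for $\dot V$. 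The safety argument, the bound on $d_{\mathcal{U}}$, and the convergence of $\mathbf{v}-\boldsymbol{\nu}_d\to0$ then carry over verbatim. This yields Claim \ref{claim1:prop} directly.

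\textbf{Step 3: equilibria and almost global stability.} Equilibria require $\boldsymbol{\omega}=\mathbf{0}_3$, since $\dot{\mathbf{x}}=\frac{1}{2}\mathbf{A}\boldsymbol{\omega}=0$ and $\mathbf{A}$ is injective. With $\boldsymbol{\omega}=0$, $\dot{\boldsymbol{\omega}}=2k_d\beta\mathbf{A}^\top\boldsymbol{\nu}_d=0$ forces $\boldsymbol{\nu}_d=0$, because $\boldsymbol{\nu}_d$ is tangent; this gives $\mathcal{S}_f$ and Claim \ref{claim3:prop}.

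For Claim \ref{claim4:prop}, I will transport the conclusion of Claim \ref{claim4:theorem} of Theorem \ref{theorem:VTF} through the diffeomorphism $(\mathbf{x},\boldsymbol{\omega})\mapsto(\mathbf{x},\frac{1}{2}\mathbf{A}(\mathbf{x})\boldsymbol{\omega})$ onto the tangent bundle. Two properties need to be preserved:
\begin{itemize}
\item Lyapunov stability, which is preserved because the map is continuous with continuous inverse.
\item Zero-measure exceptional sets, which are preserved under this smooth map.
\end{itemize}

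\textbf{Main obstacle.} The hardest step is the measure argument. Theorem \ref{theorem:VTF} is stated on $\mathcal{M}\times\mathbb{R}^{4}$, while the tangent bundle is a measure-zero submanifold of that space. So I cannot simply restrict the theorem's conclusion; I must redo the stable-manifold step on the $6$-dimensional tangent bundle. The plan is to linearize at each $(\mathbf{x}^*,\mathbf{0})$ with $\mathbf{x}^*\in\mathcal{E}$ in $(\mathbf{x},\boldsymbol{\omega})$ coordinates. The unstable tangent eigenvalue of $\mathbf{J}_d(\mathbf{x}^*)$ from Property \ref{condition:undesired} should produce a positive eigenvalue of the block system, and the stable manifold theorem then gives measure zero.
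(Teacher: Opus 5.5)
Your proposal is correct, and it is more careful than the paper's own argument. The paper's proof is a one-line reduction. It asserts that under $\mathbf{v}=\frac{1}{2}\mathbf{A}(\mathbf{x})\boldsymbol{\omega}$ the quaternion closed loop \emph{is} the generic closed loop \eqref{dynamics_motion_model_on_sphere}--\eqref{proposed_feedback_control_input}, and then invokes Theorem~\ref{theorem:VTF} wholesale.

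The complication you flag is genuine. For tangent $\mathbf{v}$ one has $\mathbf{x}^\top\mathbf{u}=-\boldsymbol{\nu}_d(\mathbf{x})^\top\mathbf{v}$, because $\mathbf{x}^\top\mathbf{J}_d(\mathbf{x})\mathbf{w}=-\boldsymbol{\nu}_d(\mathbf{x})^\top\mathbf{w}$ for $\mathbf{w}\in\mathsf{T}_{\mathbf{x}}\mathbb{S}^3$. Tangency of $\mathbf{v}$, however, forces $\mathbf{x}^\top\dot{\mathbf{v}}=-\|\mathbf{v}\|^2$. Three consequences follow:
\begin{itemize}
\item the equation $\mathbf{A}(\mathbf{x})\bar{\boldsymbol{\tau}}=\frac{1}{2}\|\boldsymbol{\omega}\|^2\mathbf{x}+2\mathbf{u}$ is only solved in the least-squares sense;
\item the realized dynamics are $\dot{\mathbf{v}}=-\|\mathbf{v}\|^2\mathbf{x}+\mathbf{P}(\mathbf{x})\mathbf{u}$;
\item the tangent bundle is not even invariant under the generic flow, since there $\frac{d}{dt}(\mathbf{x}^\top\mathbf{v})=\mathbf{v}^\top(\mathbf{v}-\boldsymbol{\nu}_d)$ whenever $\mathbf{x}^\top\mathbf{v}=0$.
\end{itemize}

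Your route is what actually makes the reduction sound. You check that Lemma~\ref{lemma:safety_lemma}, Lemma~\ref{lemma:monotonic_decrease_velocity_difference} and Part~1 of the proof of Theorem~\ref{theorem:VTF} use only $\dot{\mathbf{x}}=\mathbf{P}(\mathbf{x})\mathbf{v}$ and $\dot V=-k_d\beta\|\mathbf{v}-\boldsymbol{\nu}_d\|^2$, verify the latter for the quaternion loop, and redo the local analysis on the $6$-dimensional state space. The paper's route is shorter, but it rests on an identification that is not literally true.

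One internal inconsistency needs fixing. For exactly the same reason, you cannot transport Lyapunov stability of $(\mathbf{x}_d,\mathbf{0}_3)$ from Theorem~\ref{theorem:VTF} through your diffeomorphism. The two vector fields differ, so the obstacle is not only the measure argument. Your linearization handles both equilibria at once. Use the chart $\mathbf{y}=\mathbf{A}(\mathbf{x}^*)^\top\mathbf{x}$, and set $\mathbf{B}=\mathbf{A}(\mathbf{x}^*)^\top\mathbf{J}_d(\mathbf{x}^*)\mathbf{A}(\mathbf{x}^*)$ and $\beta^*=\beta(d_{\mathcal{U}}(\mathbf{x}^*))$. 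The Jacobian at $(\mathbf{x}^*,\mathbf{0}_3)$ is
\[
\begin{bmatrix}\mathbf{O}_3 & \frac{1}{2}\mathbf{I}_3\\ 2k_d\beta^*\mathbf{B} & \mathbf{B}-k_d\beta^*\mathbf{I}_3\end{bmatrix},
\]
whose characteristic polynomial is $(\lambda+k_d\beta^*)^3\det(\lambda\mathbf{I}_3-\mathbf{B})$.

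Since $\boldsymbol{\nu}_d(\mathbf{x}^*)=\mathbf{0}_4$, the tangent space $\mathsf{T}_{\mathbf{x}^*}\mathbb{S}^3$ is invariant under $\mathbf{J}_d(\mathbf{x}^*)$. Hence the eigenvalues of $\mathbf{B}$ are exactly the tangential eigenvalues appearing in Property~\ref{condition:eigenvalues_of_equilibria}. Property~\ref{condition:xd} then gives local exponential stability at $\mathbf{x}_d$. Property~\ref{condition:undesired} gives an unstable direction at each $\mathbf{x}^*\in\mathcal{E}$, and the center-stable manifold argument yields the zero-measure conclusion. This is also cleaner than the paper's linearization on $\mathbb{S}^n\times\mathbb{R}^{n+1}$, since no spurious normal directions have to be discarded.
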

\proof{Under the transformation $\mathbf{v} = \frac{1}{2}\mathbf{A}(\mathbf{x})\boldsymbol{\omega}$, the closed-loop system \eqref{quaternion_dynamics}-\eqref{torque_control_input_quaternion} can be transformed into the closed-loop system \eqref{dynamics_motion_model_on_sphere}-\eqref{proposed_feedback_control_input}.
%Furthermore, using this transformation one gets $\|\mathbf{v} - \boldsymbol{\nu}_d(\mathbf{x})\| = \frac{1}{2}\|\boldsymbol{\omega} - \boldsymbol{\omega}_f(\mathbf{x})\|$.
Therefore, the proof of  Proposition \ref{prop:quaternion_system} follows from Theorem \ref{theorem:VTF}.}

\section{Simulation results}

\subsection{Constrained stabilization on \texorpdfstring{$2$}{}-sphere}
We consider the $2$-sphere with six star-shaped obstacles, as shown in Fig.~\ref{fig:x-trajectories}.
The desired vector field $\boldsymbol{\nu}_d(\cdot)$ is given in \eqref{nu_d}, with parameters $k_1 = 1$, $\kappa = 1$, and $\epsilon = 0.13$ rad.
The separation function $d_{\mathcal{U}}(\cdot)$ is defined in \eqref{spherical_distance_definition}, and the parameters $\epsilon_1$ and $\epsilon_2$ in \eqref{beta_function_definition} are set to $0.087$ rad and $0.13$ rad, respectively.
The gain $k_d$ in \eqref{proposed_feedback_control_input} is set to $1$.
The location of the constant unit vectors $\mathbf{g}_i$ used in \eqref{ideal_kinematic_planner_individual} is denoted using magenta dots.

The $\mathbf{x}$-trajectories are initialized at ten distinct points in the free space $\mathcal{M}$, indicated by diamond markers.
For each initial condition $\mathbf{x}(0)$, the initial velocity $\mathbf{v}(0)$ is chosen as a unit vector directed toward the closest point on $\mathcal{U}$, denoted by $\Pi_{\mathcal{U}}(\mathbf{x}(0))$, where $\Pi_{\mathcal{U}}(\cdot)$ is defined analogously to \eqref{unique_minimizer} with $\mathcal{U}_i$ replaced by $\mathcal{U}$.

The $\mathbf{x}$-trajectories converge to the desired point $\mathbf{x}_d$ while remaining in the free space for all time $t\geq 0$, as shown in Fig. \ref{fig:x-trajectories}.
Additionally, Fig. \ref{fig:z_norm} shows that the norm $\|\mathbf{v} - \boldsymbol{\nu}_d(\mathbf{x})\|$ decreases monotonically along system trajectories, as stated in Claim \ref{claim2:theorem} of Theorem \ref{theorem:VTF}.
Furthermore, Fig. \ref{fig:u_norm} illustrates that the control input remains bounded for all time $t\geq 0$, as established in Claim \ref{claim3:lemmaVTF} of Lemma \ref{lemma:safety_lemma}.

\begin{comment}
\begin{figure*}
    \centering
    \includegraphics[width=1\linewidth]{Images/S2-composite.png}
    \caption{Implementation of the closed-loop system \eqref{dynamics_motion_model_on_sphere}-\eqref{proposed_feedback_control_input} with $\mathbf{v}_d$ defined in \eqref{ideal_kinematic_planner_example}. (a) $\mathbf{x}$-trajectories safely converging to $\mathbf{x}_d$, (b) $\|\mathbf{v} - \boldsymbol{\nu}_d(\mathbf{x})\|$ versus time, (c) $\|\mathbf{u}\|$ versus time, (d) $d_{\mathcal{U}}(\mathbf{x})$ versus time. {\color{red} the figures are too close to each other}} {\color{blue}Its because it is one composite image. I was thinking of keeping this image big, as it is the only simulation result. However, I will create 4 separate images, so we can adjust them individually.}
    \label{main_result}
\end{figure*}
\end{comment}

\begin{figure}[ht]
    \centering

    \begin{subfigure}{\linewidth}
        \centering
        \includegraphics[width=0.698\linewidth]{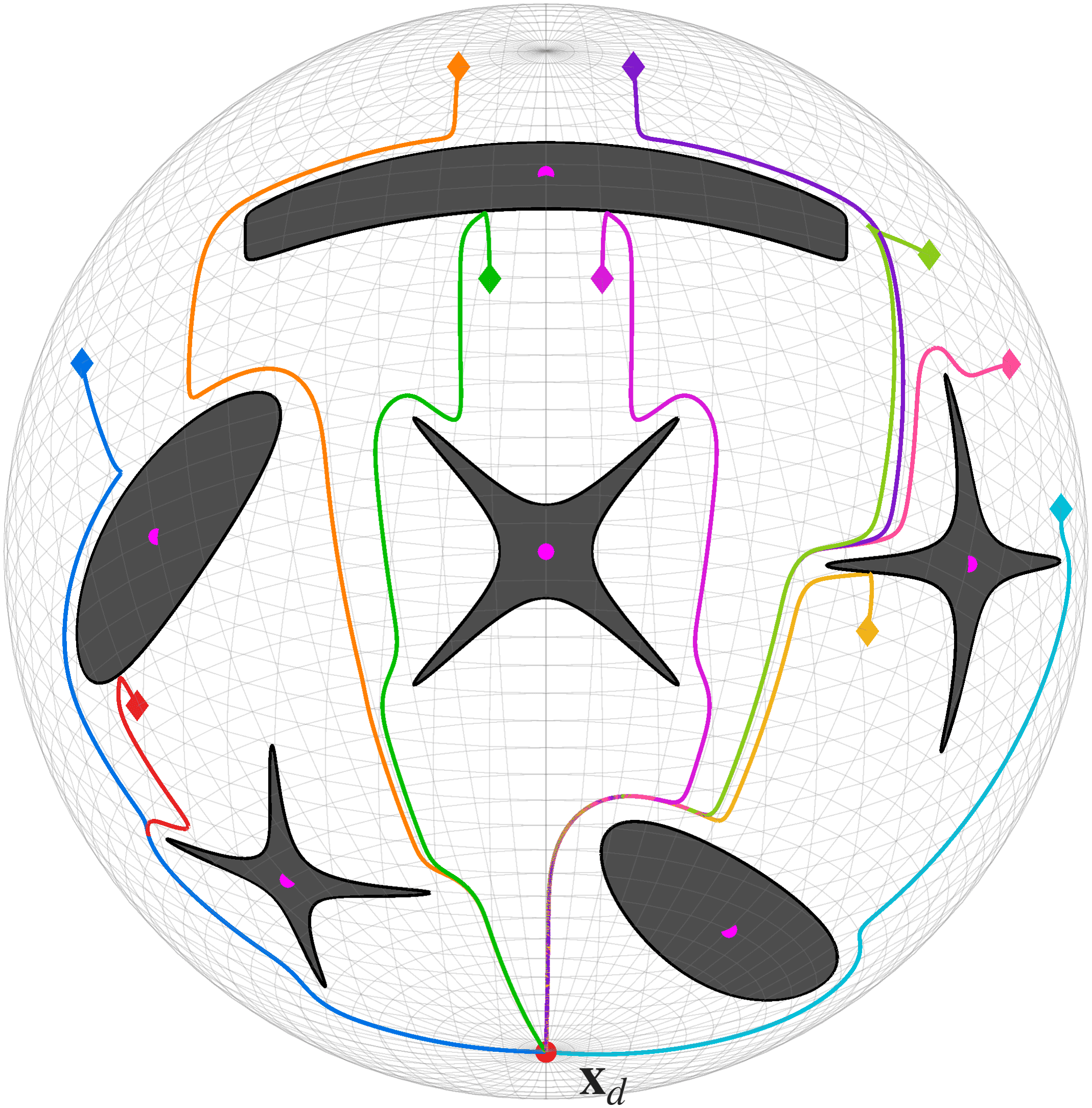}
        \caption{}
        \label{fig:x-trajectories}
    \end{subfigure}

    \vspace{0em}

    \begin{subfigure}{\linewidth}
        \centering
        \includegraphics[width=\linewidth]{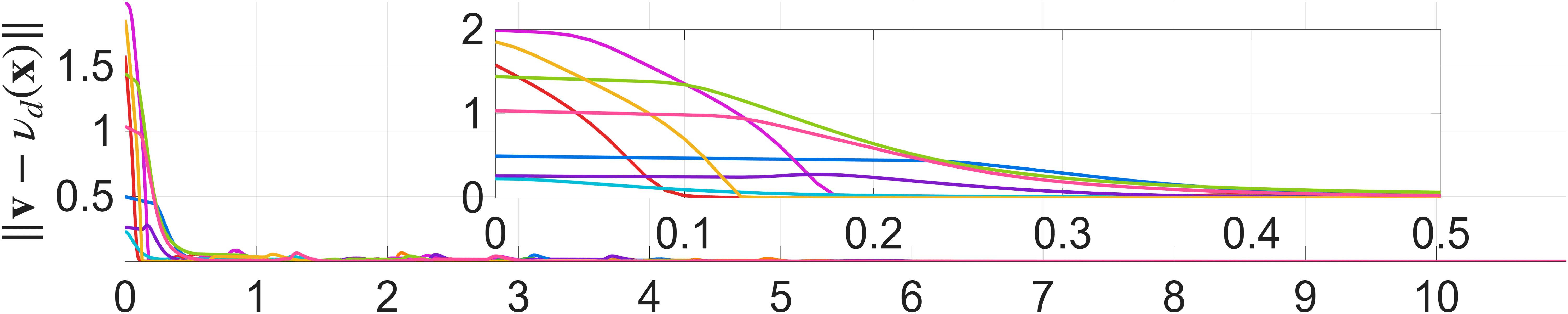}
        \caption{}
        \label{fig:z_norm}
    \end{subfigure}

    \vspace{0em}

    \begin{subfigure}{\linewidth}
        \centering
        \includegraphics[width=\linewidth]{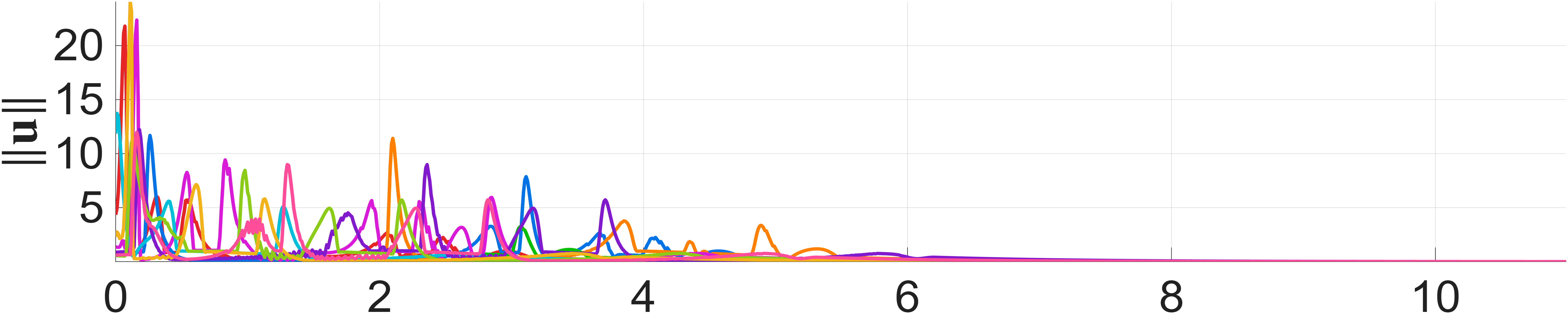}
        \caption{}
        \label{fig:u_norm}
    \end{subfigure}

    \vspace{0em}

    \begin{subfigure}{\linewidth}
        \centering
        \includegraphics[width=\linewidth]{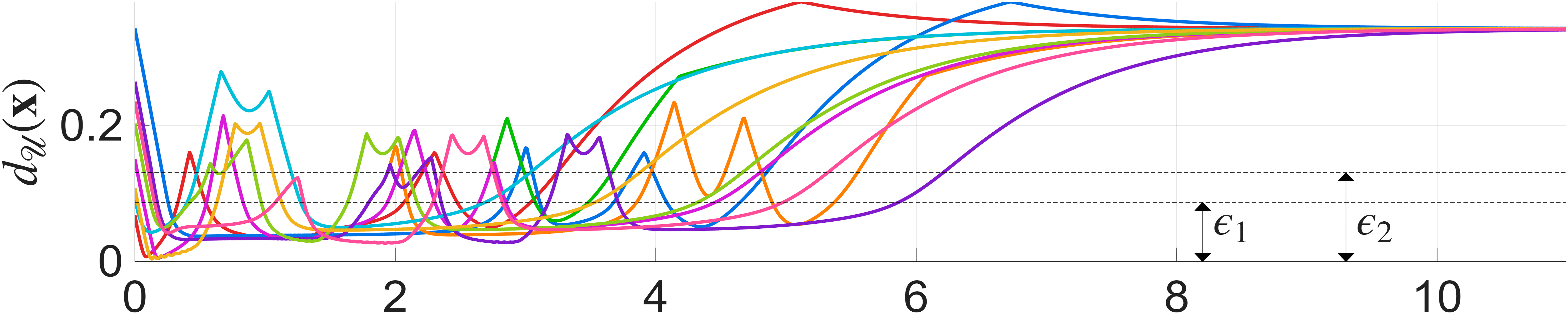}
        \caption{}
        \label{fig:d_profile}
    \end{subfigure}

    \caption{Simulation of the closed-loop system \eqref{dynamics_motion_model_on_sphere}-\eqref{proposed_feedback_control_input} with $\boldsymbol{\nu}_d$ defined in \eqref{nu_d}. (a) $\mathbf{x}$-trajectories safely converging to $\mathbf{x}_d$, (b) $\|\mathbf{v} - \boldsymbol{\nu}_d(\mathbf{x})\|$ versus time, (c) $\|\mathbf{u}\|$ versus time, (d) $d_{\mathcal{U}}(\mathbf{x})$ versus time.}
    \label{main_result}
\end{figure}

\subsection{Reduced attitude constrained stabilization}
The closed-loop system \eqref{reduced_rigid_body_dynamics_SO3}-\eqref{torque_control_RSO3} is simulated with the desired vector field $\boldsymbol{\nu}_d(\cdot)$ defined in \eqref{nu_d}.
We consider the $2$-sphere with a single star-shaped obstacle, as shown in Fig. \ref{RSO3:x-trajectories}.
The controller gains $k_1$, $\kappa$ used in \eqref{ideal_kinematic_planner_example} and $k_d$ used in \eqref{proposed_feedback_control_input} are all set to $1$.
The parameters in \eqref{beta_function_definition} are chosen as $\epsilon_1 = 0.2$ rad and $\epsilon_2=0.4$ rad.
The scalar $\epsilon$ in \eqref{ideal_kinematic_planner_example} is set to $0.4$ rad.
The inertia matrix is set to $\mathbf{J}_m = \mathrm{diag}(0.01, 0.01, 0.002)$ kg-m$^2$.
%{\color{red} No need to refer to a paper...just use $\mathbf{J}_m = \mathrm{diag}(0.01, 0.01, 0.005)$ for instance or whatever you used in your previous simulations....since we don't have experimental results} 
%{\color{blue}Hello Professor, I got these values for $\mathbf{J}_m$ from \cite{geiger2025online} where it is stated that this represents the principle moments of inertia of Qdrone2. Do I need to add any reference or is it fine if I keep it as it is?}

The system is simulated for two cases: $\gamma = 0$ (green trajectories) and $\gamma = 1$ (blue trajectories).
The initial angular velocity $\boldsymbol{\omega}(0)$ is set to $\mathbf{x}(0)$ rad/s. 
Therefore, in both cases, $\|\boldsymbol{\omega}^{\perp}(0)\| = 0$ rad/s and $\|\boldsymbol{\omega}^{\parallel}(0)\| = 1$ rad/s, where $\boldsymbol{\omega}^{\perp}$ and $\boldsymbol{\omega}^{\parallel}$ are defined in Section \ref{section:reduced-attitude}.

Since $\gamma$ only affects the torque control input \eqref{torque_control_RSO3} that controls the rotation of the rigid body about $\mathbf{x}$, the pointing direction $\mathbf{x}$ is safely steered to $\mathbf{x}_d$ irrespective of different values of $\gamma$, as illustrated in Fig. \ref{RSO3:x-trajectories}.
When $\gamma = 0$, the rigid body continues to rotate about $\mathbf{x}$ with its initial angular velocity $\boldsymbol{\omega}(0)$.
In contrast, when $\gamma=1$, the magnitude of the angular velocity component $\boldsymbol{\omega}^{\parallel}$ parallel to $\mathbf{x}$ asymptotically converges to $0$, as shown in Fig. \ref{RSO3:w_parallel}.

\begin{figure}[ht]
    \centering

    \begin{subfigure}{\linewidth}
        \centering
        \includegraphics[width=0.7\linewidth]{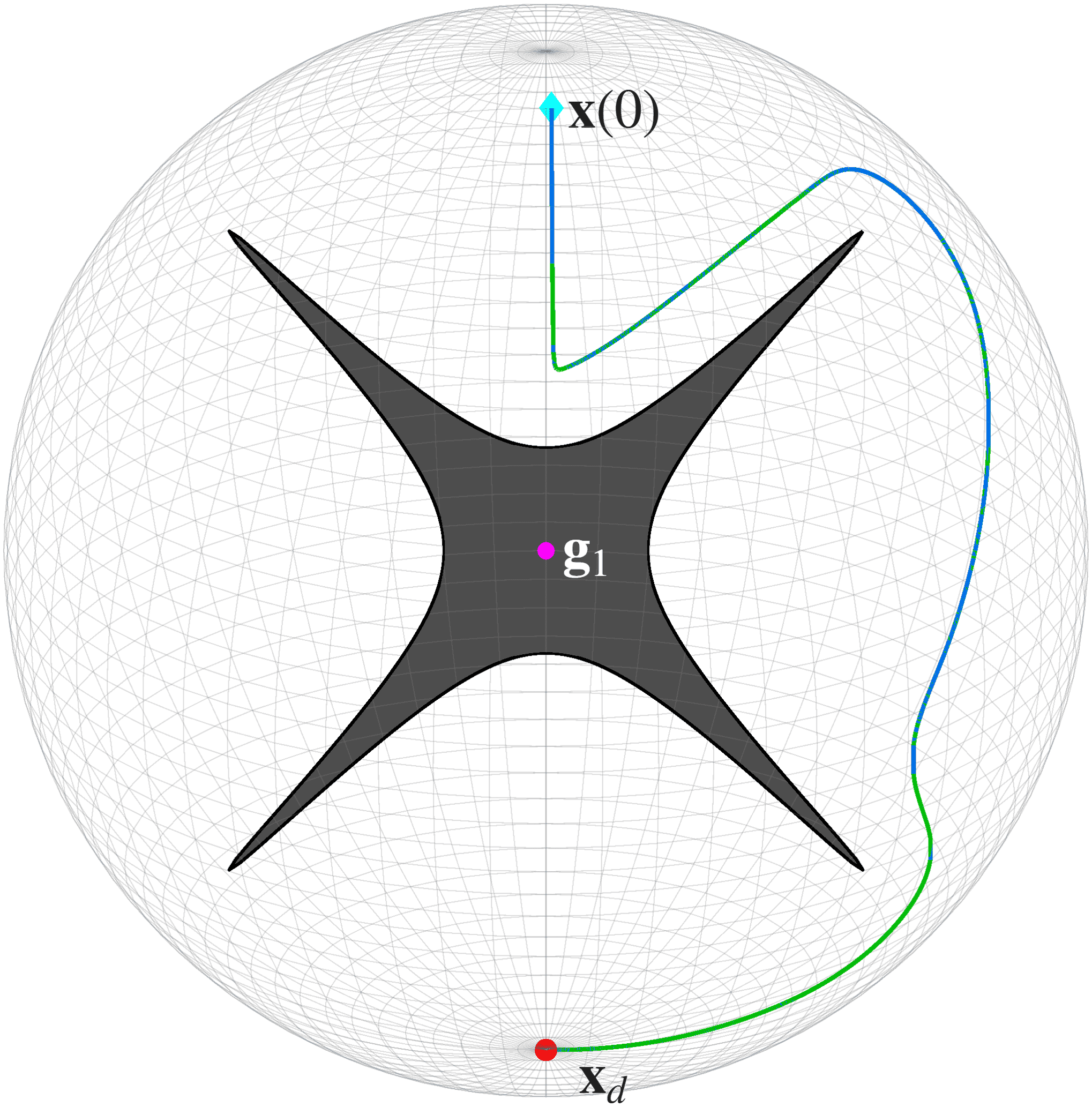}
        \caption{}
        \label{RSO3:x-trajectories}
    \end{subfigure}

    % \vspace{0em}

    % \begin{subfigure}{\linewidth}
    %     \centering
    %     \includegraphics[width=\linewidth]{Images/RSO3_images/monotonic_decrease_RSO3.png}
    %     \caption{}
    %     \label{RSO3:z_norm}
    % \end{subfigure}

    % \vspace{0em}

    % \begin{subfigure}{\linewidth}
    %     \centering
    %     \includegraphics[width=\linewidth]{Images/RSO3_images/distance_profile_RSO3.png}
    %     \caption{}
    %     \label{RSO3:d_profile}
    % \end{subfigure}

    \vspace{0em}

    \begin{subfigure}{\linewidth}
        \centering
        \includegraphics[width=\linewidth]{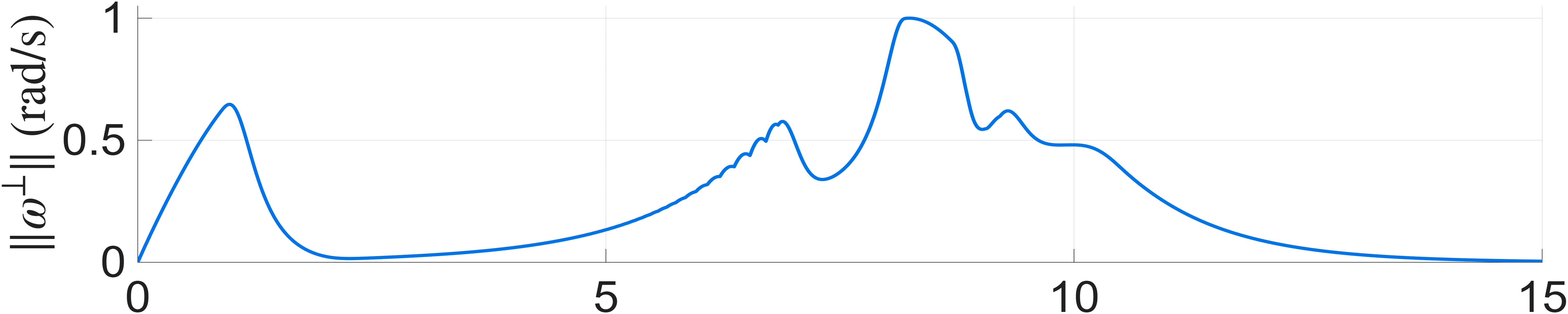}
        \caption{}
        \label{RSO3:w_perp}
    \end{subfigure}
    
    \vspace{0em}

    \begin{subfigure}{\linewidth}
        \centering
        \includegraphics[width=\linewidth]{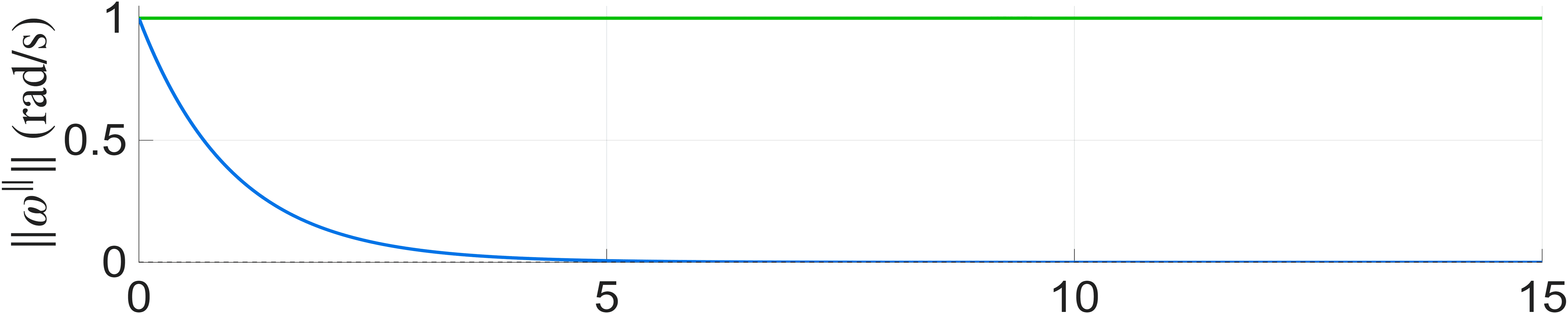}
        \caption{}
        \label{RSO3:w_parallel}
    \end{subfigure}

    \vspace{0em}

    \begin{subfigure}{\linewidth}
        \centering
        \includegraphics[width=\linewidth]{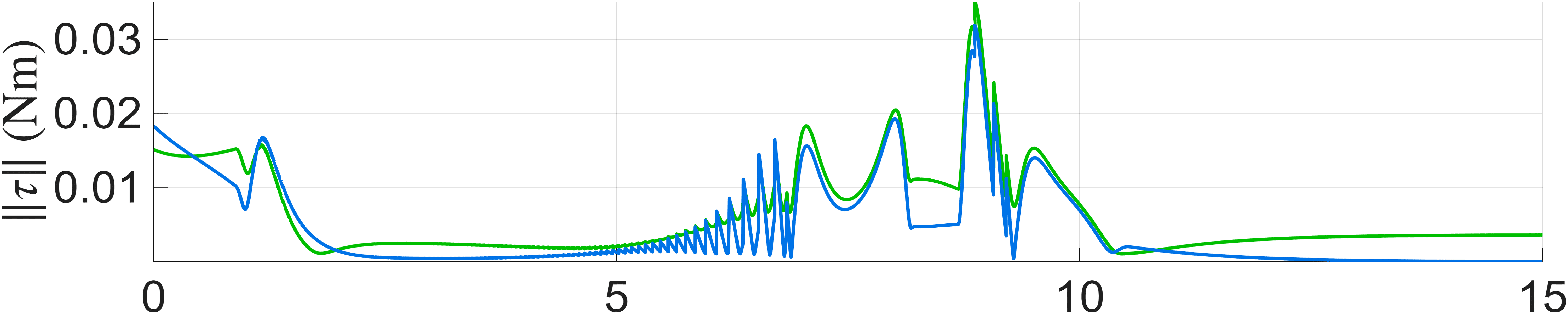}
        \caption{}
        \label{RSO3:u_norm}
    \end{subfigure}

    \caption{Simulation of the closed-loop system \eqref{reduced_rigid_body_dynamics_SO3}-\eqref{torque_control_RSO3} with $\boldsymbol{\nu}_d$ defined in \eqref{nu_d} for two different values of $\gamma$. Green trajectories are obtained with $\gamma = 0$, and the blue trajectories are obtained with $\gamma = 1$. (a) $\mathbf{x}$-trajectories safely converging to $\mathbf{x}_d$, %(b) $\|\mathbf{P}(\mathbf{x})(\boldsymbol{\omega} - \boldsymbol{\omega}_r)\|$ versus time, 
    (b) $\boldsymbol{\omega}^{\perp}$ versus time, (c) $\boldsymbol{\omega}^{\parallel}$ versus time, (d) $\|\boldsymbol{\tau}\|$ versus time.} 
    \label{RSO3_main_result}
\end{figure}

\subsection{Full attitude constrained stabilization}

The closed-loop system \eqref{quaternion_dynamics}-\eqref{torque_control_input_quaternion} is simulated with the desired vector field $\boldsymbol{\nu}_d(\cdot)$ defined in \eqref{nu_d}.
The $3$-sphere consists of a single star-shaped constraint $\mathcal{U}_1$.
The set $\mathcal{U}_1$ is constructed using $\mathcal{O}_1$ shown in Fig. \ref{Qua:3D-star-obstacle} as follows:
\begin{equation*}
    \mathcal{U}_1 = \left\{\mathbf{x}\in\mathbb{S}^3\;\Bigg|\;\mathbf{x} = \frac{\mathbf{p} + \alpha\mathbf{g}_1}{\|\mathbf{p} + \alpha\mathbf{g}_1\|}, \mathbf{p}\in\mathcal{O}_1\right\}, 
\end{equation*}
where $\alpha = 2.5$ and $\mathbf{g}_1 = [0.5, 0.5, 0.5, 0.5]^{\top}$.
The boundary of the set $\mathcal{O}_1$ is a collection of all points $\mathbf{p} = [p_1, p_2, p_3, p_4]^\top\in\mathbb{R}^4$ such that 
\begin{equation*}
    (p_1^2 + p_2^2 + p_3^2)^3 - 3000(p_1^2p_2^2 + p_2^2p_3^2 + p_3^2p_1^2) - 100 = 0,
\end{equation*}
with $p_4 = 0$.

The controller gains $k_1$, $\kappa$ used in \eqref{ideal_kinematic_planner_example} are all set to $1$.
The parameter $k_d$ used in \eqref{proposed_feedback_control_input} is set to $0.5$.
The parameters in \eqref{beta_function_definition} are chosen as $\epsilon_1 = 0.25$ rad and $\epsilon_2=0.5$ rad.
The scalar $\epsilon$ in \eqref{ideal_kinematic_planner_example} is set to $0.5$ rad.
The inertia matrix is set to $\mathbf{J}_m = \mathrm{diag}(0.01, 0.01, 0.002)$ kg$\cdot$m$^2$. 
The $\mathbf{x}$-trajectories are initialized at 10 different initial condition in $\mathcal{M}$, and the initial angular velocities $\boldsymbol{\omega}(0)$ are set to $\mathbf{0}_3$ rad/s.
The desired point is chosen as $\mathbf{x}_d = [1,0,0,0]^\top$.

The torque control input \eqref{torque_control_input_quaternion} steers all $\mathbf{x}$-trajectories to $\mathbf{x}_d$, as shown in Fig. \ref{Qua:eta}-\ref{Qua:q3}.
Additionally, $\mathbf{x}(t)$ evolves in the free space $\mathcal{M}$ for all $t\geq 0$ \textit{i.e.}, $d_{\mathcal{U}}(\mathbf{x}(t)) > 0, \;\forall t\geq 0$ as illustrated in Fig. \ref{Qua:du}.
Moreover, the torque control input guarantees that $\Lim_{t\to\infty}\|\boldsymbol{\omega}(t)\| = 0$, as shown in Fig. \ref{Qua:w}.

\begin{figure*}[ht]
    \centering
    \begin{minipage}[t]{0.32\textwidth}
        \vspace{0pt} 
        \begin{subfigure}{\linewidth}
            \centering
            \includegraphics[width=0.76\linewidth]{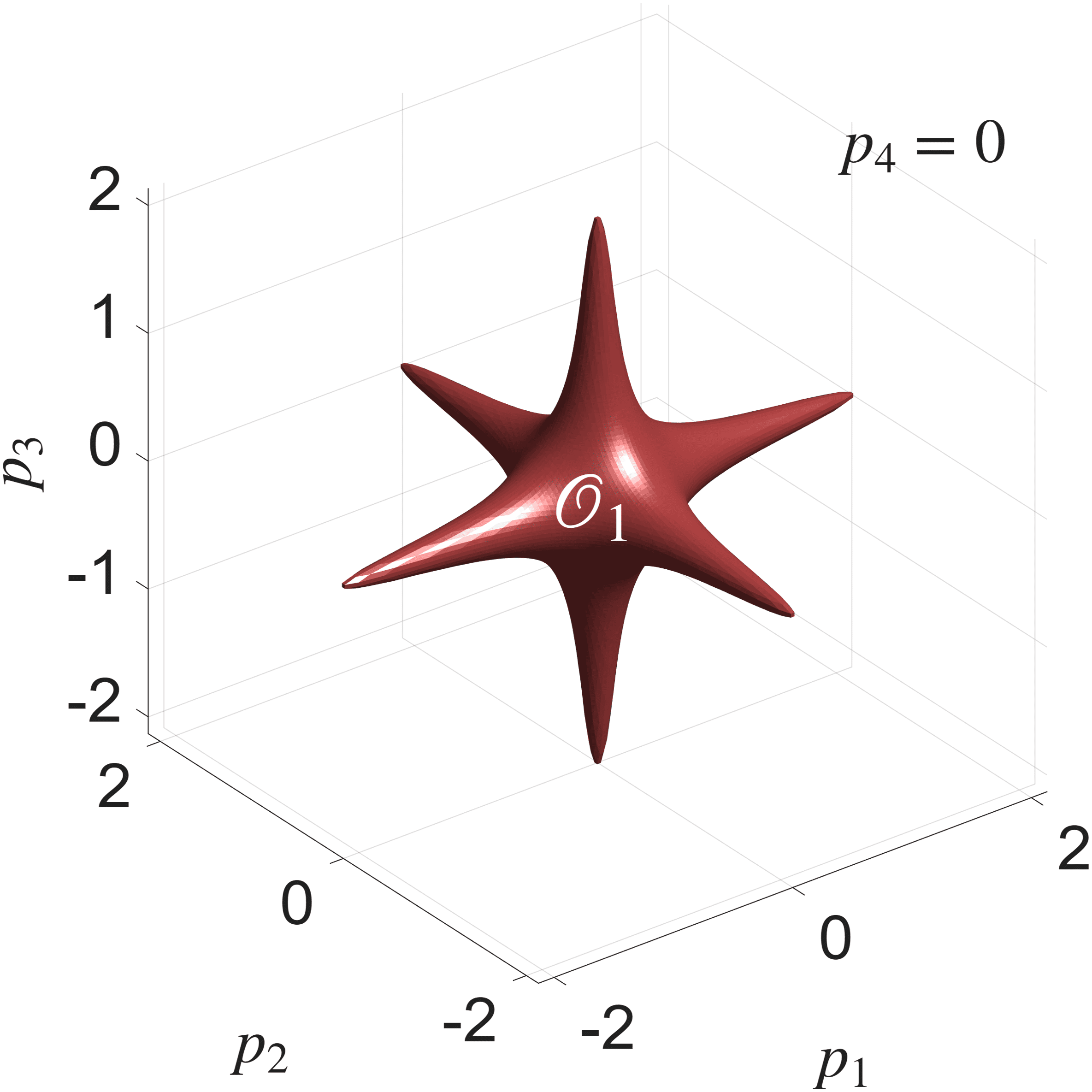}
            \caption{}
            \label{Qua:3D-star-obstacle}
        \end{subfigure}
        
        %\vspace{1em} 

        \begin{subfigure}{\linewidth}
            \centering
            \includegraphics[width=\linewidth]{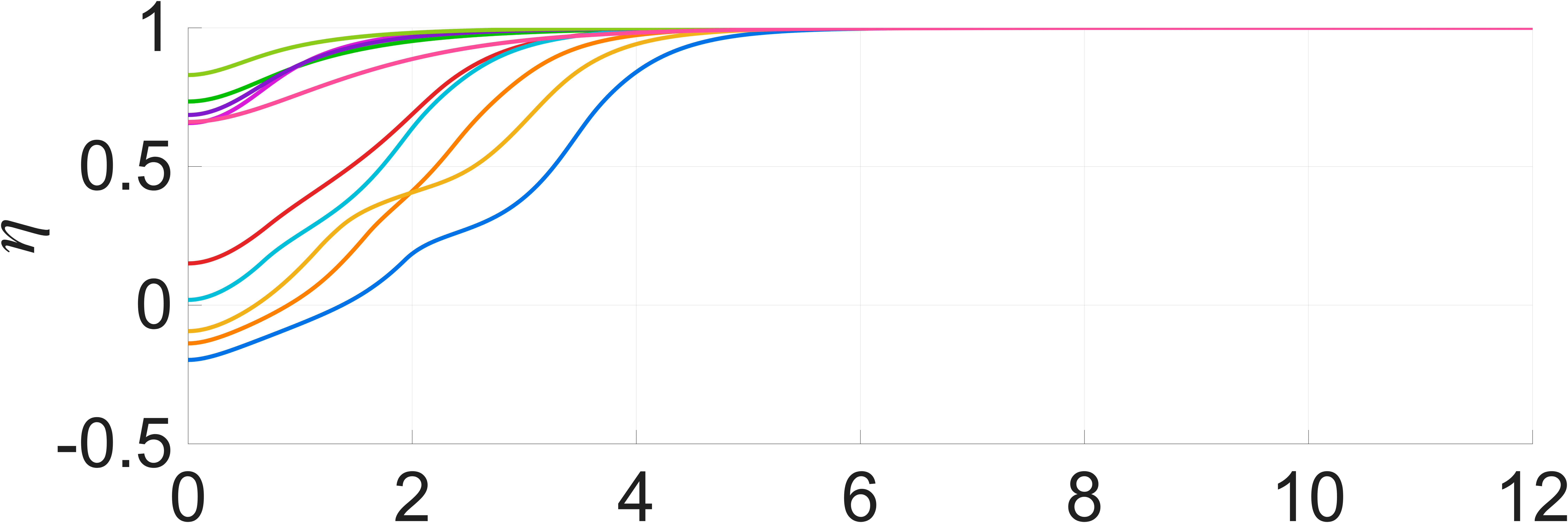}
            \caption{}
            \label{Qua:eta}
        \end{subfigure}
    \end{minipage}\hfill
    \begin{minipage}[t]{0.32\textwidth}
        \vspace{0pt} 
        \begin{subfigure}{\linewidth}
            \centering
            \includegraphics[width=\linewidth]{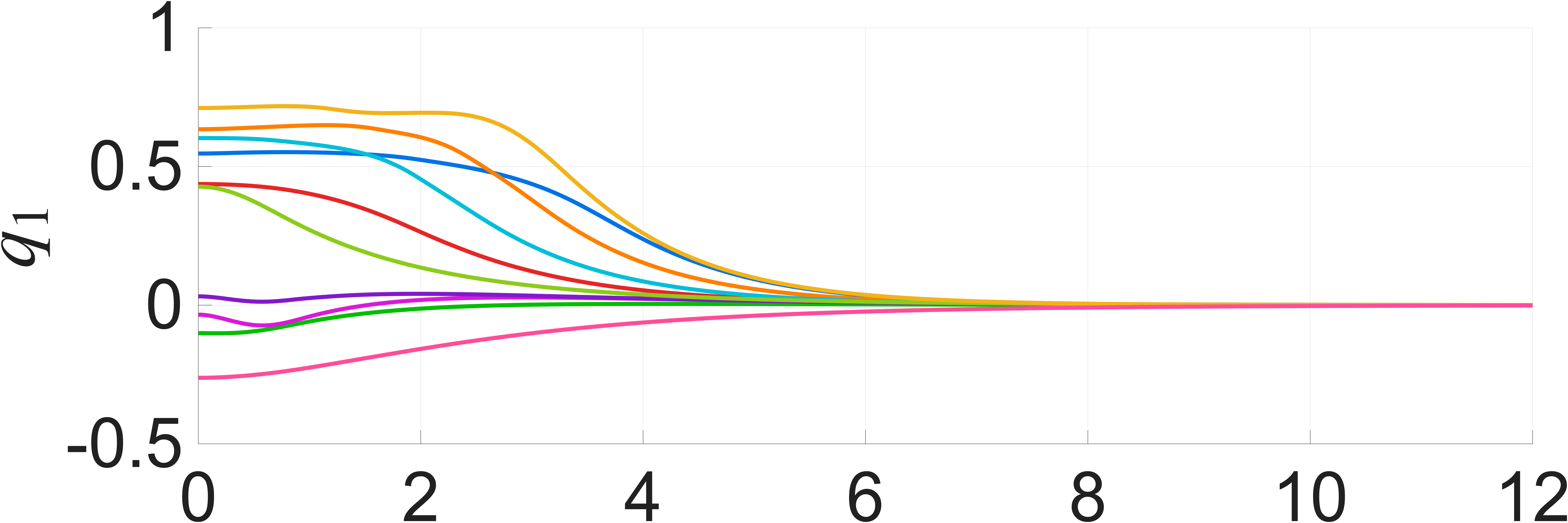}
            \caption{}
            \label{Qua:q1}
        \end{subfigure}
        
        %\vspace{1em}

        \begin{subfigure}{\linewidth}
            \centering
            \includegraphics[width=\linewidth]{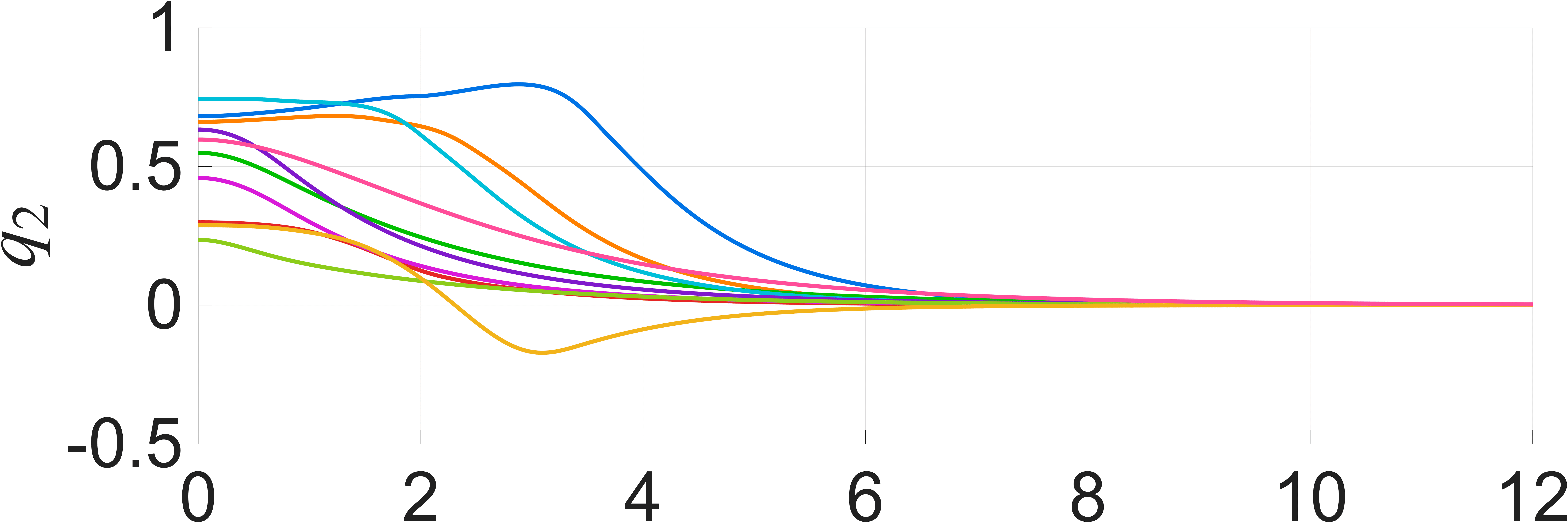}
            \caption{}
            \label{Qua:q2}
        \end{subfigure}
        
        %\vspace{1em}

        \begin{subfigure}{\linewidth}
            \centering
            \includegraphics[width=\linewidth]{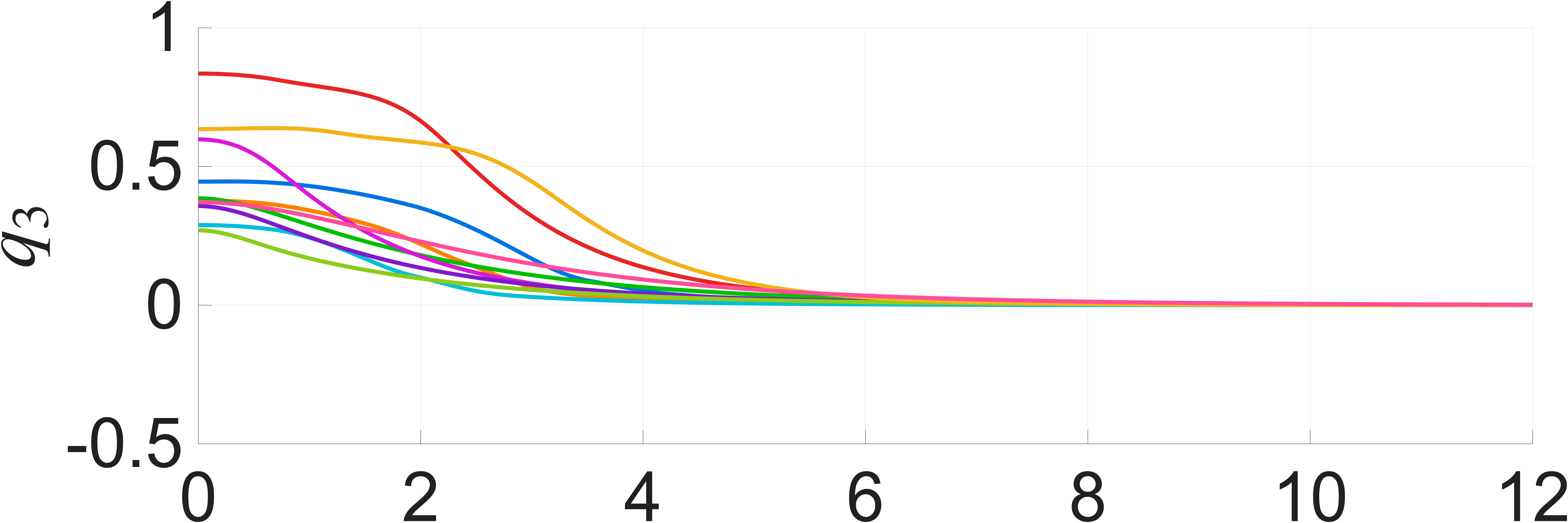}
            \caption{}
            \label{Qua:q3}
        \end{subfigure}
    \end{minipage}\hfill
    \begin{minipage}[t]{0.32\textwidth}
        \vspace{0pt} 
        \begin{subfigure}{\linewidth}
            \centering
            \includegraphics[width=\linewidth]{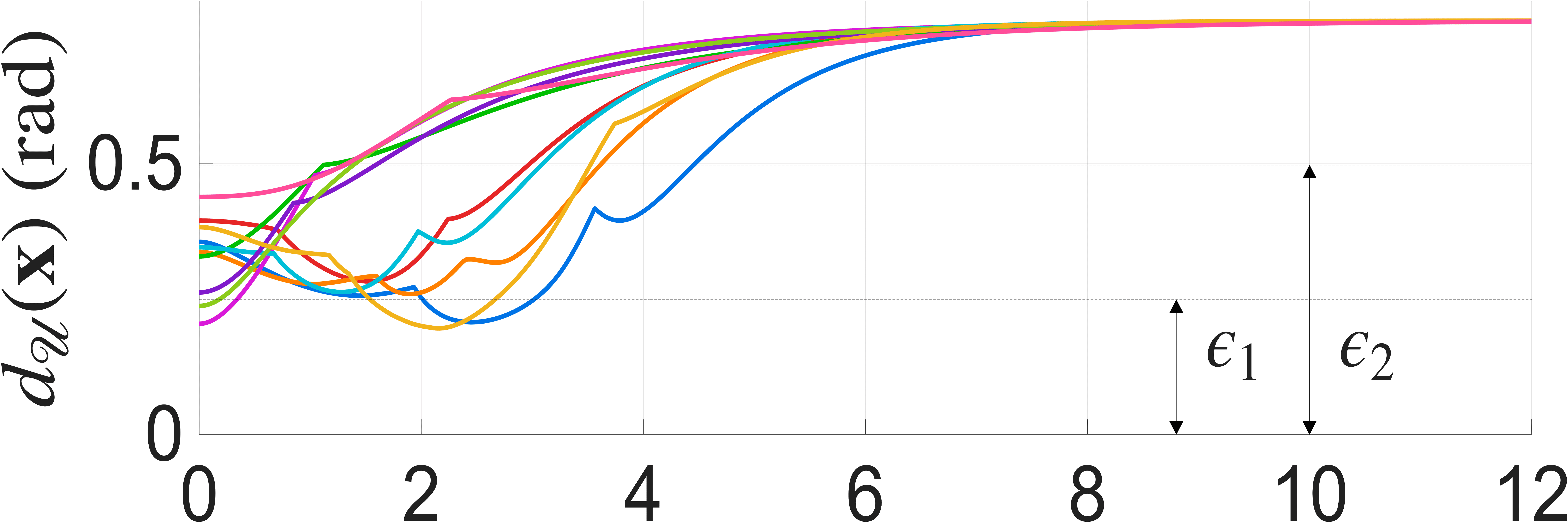}
            \caption{}
            \label{Qua:du}
        \end{subfigure}
               
        %\vspace{1em}

        \begin{subfigure}{\linewidth}
            \centering
            \includegraphics[width=\linewidth]{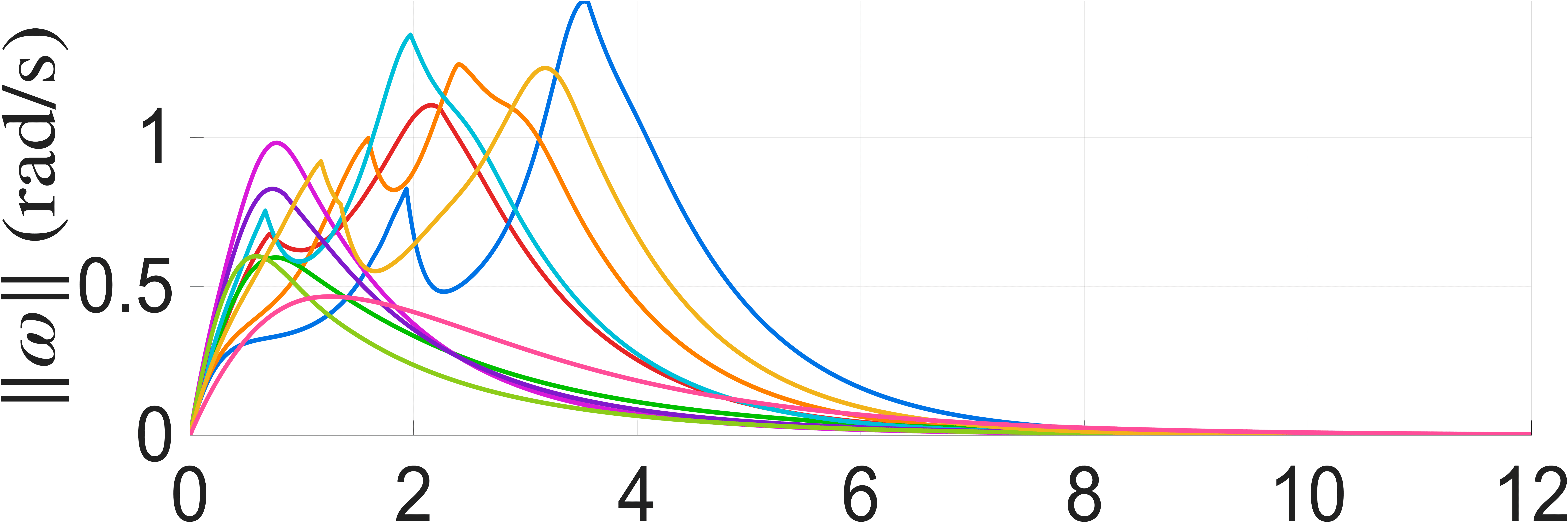}
            \caption{}
            \label{Qua:w}
        \end{subfigure}
        
        %\vspace{1em}

        \begin{subfigure}{\linewidth}
            \centering
            \includegraphics[width=\linewidth]{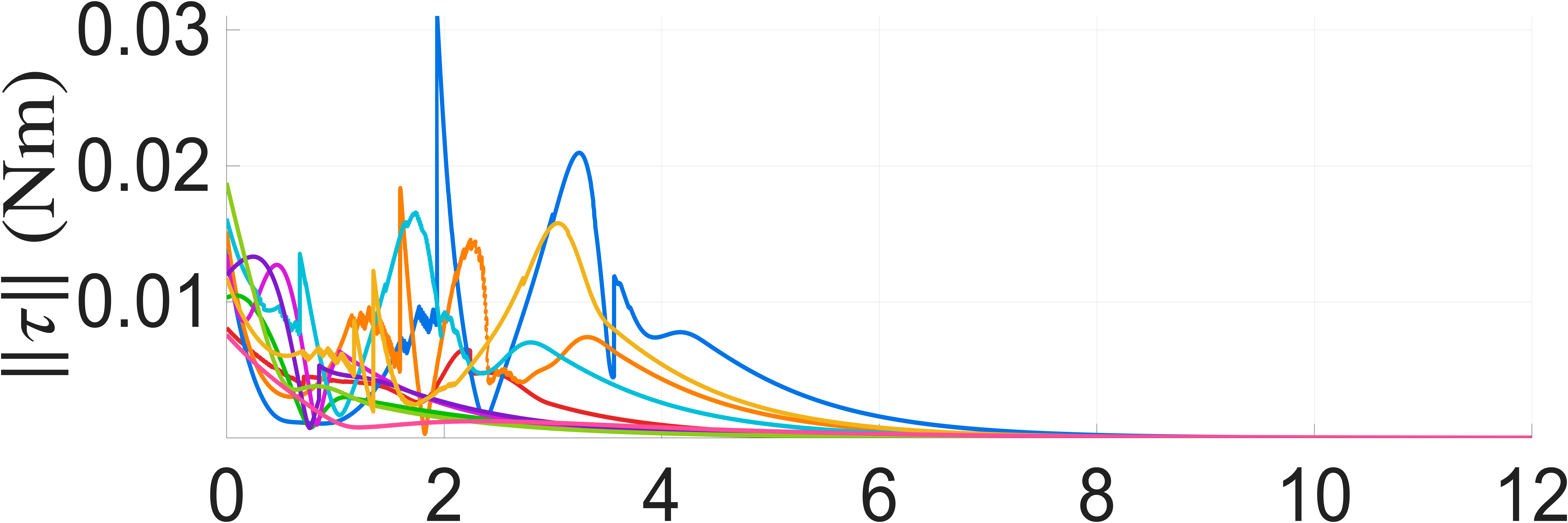}
            \caption{}
            \label{Qua:torque}
        \end{subfigure}
 
    \end{minipage}

    \caption{Simulation of the closed-loop system \eqref{quaternion_dynamics}-\eqref{torque_control_input_quaternion} with $\boldsymbol{\nu}_d$ defined in \eqref{nu_d}. (a) A star-shaped set $\mathcal{O}_1$, (b)-(e) $\mathbf{x}$-trajectories converging to $\mathbf{x}_d = [1, 0, 0, 0]^\top$, (f) $d_{\mathcal{U}}(\mathbf{x})$ versus time, (g) $\|\boldsymbol{\tau}\|$ versus time. 
    (h) $\|\boldsymbol{\omega}\|$ versus time.
    }
    \label{Qua_main_result}
\end{figure*}

\section{Conclusion}
In this work, we propose a feedback control design for the constrained stabilization problem of second-order systems evolving on the $n$-sphere.
Unlike the majority of the existing literature, where the unsafe regions are typically represented by conic sets, our approach is able to handle a more general class of obstacles represented star-shaped sets on the $n$-sphere which offers a more flexible characterization of the unsafe regions, potentially enabling a larger safe region for stabilization purposes.
The proposed feedback control input guarantees safety and almost global asymptotic stability of the equilibrium $(\mathbf{x}_d, \mathbf{0}_{n+1})$ over the state space $\mathcal{M}\times\mathbb{R}^{n+1}$.

\begin{appendix}
    \subsection{Examples of separation function \texorpdfstring{$d_{\mathcal{U}}(\cdot)$}{}}\label{remark:separation_function_examples}
    
    For $\mathbf{x}\in\overline{\mathcal{M}}$, we provide three valid constructions for  $d_{\mathcal{U}}(\mathbf{x})$:
    \begin{itemize}
        \item {\bf Spherical distance:} Define $d_{\mathcal{U}} : \overline{\mathcal{M}}\to[0, \pi]$ by
        \begin{equation}\label{spherical_distance_appendix}
            d_{\mathcal{U}}(\mathbf{x}) = \underset{\mathbf{a}\in\mathcal{U}}{\inf}\;\arccos(\mathbf{x}^\top\mathbf{a}).
        \end{equation}
 %       where for any $p\in[-1, 1]$, the inverse cosine function satisfies $\arccos(p)\in[0, \pi]$.
        This function is used as a separation function in Section \ref{section:explicit_feedback_control}.
        It represents the smallest non-negative angle between $\mathbf{x}\in\overline{\mathcal{M}}$ and elements of $\mathcal{U}$, and satisfies Property \ref{property:distance1}. 

        %Since $\mathcal{U}$ is the union of $m$ disjoint closed subsets $\mathcal{U}_i$ of $\mathbb{S}^n$, where $i\in\mathbb{I}$, and %their boundaries $\partial\mathcal{U}_i, i\in\mathbb{I}$ are assumed to be an $n-1$ dimensional, twice continuously differentiable embedded submanifold of $\mathbb{S}^n$, 
        Using Assumption \ref{assumption:twice_differentiability}, one can show that
        there exists $\delta_u > 0$ such that for every $\mathbf{x}\in\mathcal{N}^{\delta_u}$, the minimizer \begin{equation}\label{unique_minimizer_union}\Pi_{\mathcal{U}}(\mathbf{x}) = \underset{\mathbf{a}\in\mathcal{U}}{\arg\min}\;\arccos(\mathbf{x}^\top\mathbf{a})\end{equation} is unique, where the set $\mathcal{N}^{\delta_u}$ is defined in \eqref{neighborhood_definition}.
        Consequently, $d_{\mathcal{U}}(\cdot)$ defined in \eqref{spherical_distance_appendix} is continuously differentiable on $\mathcal{N}^{\delta_u}$. 
        For all $\mathbf{x}\in\mathcal{N}^{\delta_u}$, the projection of the gradient of $d_{\mathcal{U}}(\mathbf{x})$ onto the tangent space $\mathsf{T}_{\mathbf{x}}\mathbb{S}^n$ is given by
         \begin{equation}
        \mathbf{P}(\mathbf{x})\nabla_{\mathbf{x}}d_{\mathcal{U}}(\mathbf{x}) = \frac{\mathbf{P}(\mathbf{x})\Pi_{\mathcal{U}}(\mathbf{x})}{\|\mathbf{P}(\mathbf{x})\Pi_{\mathcal{U}}(\mathbf{x})\|}.
        \end{equation}
        Therefore, it is straightforward to verify that the spherical distance function satisfies Property \ref{property:distance2}.
        
        \item {\bf Scaled squared chordal distance:} Suppose $d_{\mathcal{U}}(\mathbf{x})$ is defined as follows:
        \begin{equation*}
            d_{\mathcal{U}}(\mathbf{x}) = \frac{1}{2}\underset{\mathbf{a}\in\mathcal{U}}{\inf}\;\|\mathbf{x} - \mathbf{a}\|^2 = \underset{\mathbf{a}\in\mathcal{U}}{\inf}\;1 - \mathbf{x}^\top\mathbf{a}.
        \end{equation*}
        This separation function satisfies Property \ref{property:distance1}. 
        Furthermore, using Assumption \ref{assumption:twice_differentiability}, one can show that this $d_{\mathcal{U}}(\mathbf{x})$ satisfies Property \ref{property:distance2}.
        
        \item {\bf Product of spherical distances:} Another example of a separation function is
        \[
        d_{\mathcal{U}}(\mathbf{x}) = \prod_{i\in\mathbb{I}}\underset{\mathbf{a}\in\mathcal{U}_i}{\inf}\;\arccos(\mathbf{x}^\top\mathbf{a}).
        \]
        This function is the product, over $i\in\mathbb{I}$, of the spherical distance between $\mathbf{x}$ and the set $\mathcal{U}_i$.
        Since it is strictly positive when $\mathbf{x} \notin \mathcal{U}$ and zero when $\mathbf{x} \in \partial\mathcal{U}$, it satisfies Property \ref{property:distance1}.
        
        Furthermore, if constraint sets $\mathcal{U}_i,i\in\mathbb{I}$ are such that $\Pi_{\mathcal{U}_i}(\mathbf{x})$ is unique for all $\mathbf{x}\in\mathbb{S}^n\setminus\mathcal{U}_i$, then one can ensure that this separation function satisfies Property \ref{property:distance2} for all $\mathbf{x}\in\mathcal{M}$, where $\Pi_{\mathcal{U}_i}(\mathbf{x})$ is obtained by replacing $\mathcal{U}$ with $\mathcal{U}_i$ in \eqref{unique_minimizer_union}.\end{itemize}

    \subsection{Proof of Lemma \ref{lemma:safety_lemma}}\label{proof:lemma:safety_lemma}
    \subsubsection{Proof of Claim \ref{claim1:lemmaVTF}}
    The proof is by contradiction. 
    Assume that there exists a finite time $T > 0$ such that $d_{\mathcal{U}}(\mathbf{x}(T)) = 0$ and $d_{\mathcal{U}}(\mathbf{x}(t)) > 0$ for all $t\in [0, T)$.
    Since $\mathbf{x}(t)$ is continuous for all $t\in[0, T]$, by construction in Section \ref{section:problem_statement}, $d_{\mathcal{U}}(\mathbf{x}(t))$ is continuous for all $t\in[0, T]$.
    Consequently, there exists $t_1\in[0, T)$ such that
    \begin{equation}\label{belongs_to_rho_neighborhood}
    d_{\mathcal{U}}(\mathbf{x}(t))\in(0, \rho),\;\forall t\in[t_1,T),
    \end{equation}
    where $\rho=\min\{\delta_d, \epsilon_1\}$, the existence of $\delta_d > 0$ is guaranteed by Property \ref{condition:Assump2:moveaway}, and $\epsilon_1$ is defined in \eqref{beta_function_definition}.

    By Property \ref{condition:Assump2:moveaway}, $\delta_d \leq \delta_u$, where $\delta_u > 0$ is a known parameter, defined as in Property \ref{property:distance2}.
    Therefore, $\rho = \min\{\delta_d, \epsilon_1\}$ implies that $0 < \rho \leq \delta_u$.
    Consequently, since $d_{\mathcal{U}}(\mathbf{x}(t))\in(0, \rho)$ for all $t\in[t_1, T)$, by Property \ref{property:distance2}, $d_{\mathcal{U}}(\mathbf{x}(t))$ is continuously differentiable over the time interval $[t_1, T)$, and one has\begin{equation}\label{derivative_of_spherical_distance}
        \dot{d}_{\mathcal{U}}(\mathbf{x}(t)) = \nabla_{\mathbf{x}}d_{\mathcal{U}}(\mathbf{x}(t))^\top\mathbf{P}(\mathbf{x}(t))\mathbf{v}(t),
    \end{equation}
    for all $t\in[t_1, T)$.

Define $\mathbf{z} = \mathbf{v} - \boldsymbol{\nu}_d(\mathbf{x})$.
Therefore, substituting $\mathbf{v} = \boldsymbol{\nu}_d(\mathbf{x}) + \mathbf{z}$ in \eqref{derivative_of_spherical_distance}, one gets
\begin{equation}
\begin{aligned}
        \dot{d}_{\mathcal{U}}(\mathbf{x}(t)) &= \nabla_{\mathbf{x}}d_{\mathcal{U}}(\mathbf{x}(t))^\top\mathbf{P}(\mathbf{x}(t))\boldsymbol{\nu}_d(\mathbf{x}(t))\\ 
        &+\nabla_{\mathbf{x}}d_{\mathcal{U}}(\mathbf{x}(t))^\top\mathbf{P}(\mathbf{x}(t))\mathbf{z}(t).
        \end{aligned}
    \end{equation}
Since $\rho=\min\{\delta_d, \epsilon_1\}$, using Property \ref{condition:Assump2:moveaway} and \eqref{belongs_to_rho_neighborhood}, one obtains
\begin{equation*}
\begin{aligned}
        \dot{d}_{\mathcal{U}}(\mathbf{x}(t)) \geq \mu +\nabla_{\mathbf{x}}d_{\mathcal{U}}(\mathbf{x}(t))^\top\mathbf{P}(\mathbf{x}(t))\mathbf{z}(t),\;t\in[t_1, T),
        \end{aligned}
    \end{equation*}
    where $\mu > 0$.
Furthermore, since $\delta_d \in (0, \delta_u]$, by Property \ref{property:distance2} and \eqref{belongs_to_rho_neighborhood}, one has $\|\mathbf{P}(\mathbf{x}(t))\nabla_{\mathbf{x}}d_{\mathcal{U}}(\mathbf{x}(t))\| \leq D_d$, $\forall t\in[t_1, T)$, where $D_d > 0$.
Consequently, by applying Cauchy-Schwarz inequality, it follows that
\begin{equation}\label{equation:inequality_zt}
\begin{aligned}
        \dot{d}_{\mathcal{U}}(\mathbf{x}(t)) \geq \mu - D_d\|\mathbf{z}(t)\|, \; \forall t\in[t_1, T).
        \end{aligned}
    \end{equation}

Suppose $\Lim_{t\to T}\|\mathbf{z}(t)\| = 0$.
Therefore, there exists a time $t_s\in[t_1, T)$ such that for all $t\in[t_s, T)$, $\|\mathbf{z}(t)\| < \frac{\mu}{D_d}$.
Consequently,  $\dot{d}_{\mathcal{U}}(\mathbf{x}(t)) > 0,$ $\forall t\in[t_s, T)$.
In other words, there exists a time $t_s$ before $T$ such that $d_{\mathcal{U}}(\mathbf{x}(t_s)) > 0$ and between the time interval $[t_s, T)$, the value of $d_{\mathcal{U}}(\mathbf{x}(t))$ strictly increases.
As a result, since $d_{\mathcal{U}}(\mathbf{x}(t))$ is continuous over the time interval $[t_s, T]$, $d_{\mathcal{U}}(\mathbf{x}(T))\ne 0$, thereby leading to a contradiction.

We proceed to show that $\Lim_{t\to T}\|\mathbf{z}(t)\| = 0$.
Define 
\[V = \frac{1}{2}\|\mathbf{z}\|^2.\] 
Taking the time derivative and using \eqref{proposed_feedback_control_input}, one obtains
\begin{equation}\label{derivative_of_V}
    \dot{V} = -k_d\beta(d_{\mathcal{U}}(\mathbf{x}))\|\mathbf{z}\|^2.
\end{equation}
Since $d_{\mathcal{U}}(\mathbf{x}(t))\in(0, \rho)$ for all $t\in[t_1, T)$ and $\rho=\min\{\delta_d, \epsilon_1\}$, by \eqref{beta_function_definition}, one has $\beta(d_{\mathcal{U}}(\mathbf{x}(t))) = d_{\mathcal{U}}(\mathbf{x}(t))^{-1}, \forall t\in[t_1, T)$, and it follows from \eqref{derivative_of_V} that 
\begin{equation}\label{equation_of_V_to_integrate}
    \dot{V}(t) = \frac{-2k_d}{d_{\mathcal{U}}(\mathbf{x}(t))}V(t), \;\forall t\in[t_1, T).
\end{equation}

To show $\Lim_{t\to T}\|\mathbf{z}(t)\| = 0$, we proceed in the following manner.
First, we show that $\mathbf{v}(t)$ is bounded for all $t\in[t_1, T)$. 
This will imply that there exists $D_v > 0$ such that $\|\mathbf{v}(t)\|\leq D_v, \;\forall t\in[t_1, T)$.
Then, using \eqref{derivative_of_spherical_distance}, we will show that $d_{\mathcal{U}}(\mathbf{x}(t)) \leq D_dD_v(T - t)$ for all $t\in[t_1, T)$, where $D_d$ is defined in Property \ref{property:distance2}.
After substituting this bound into \eqref{equation_of_V_to_integrate} and integrating with respect to time from $t_1$ to $t\in[t_1, T)$, we will be able to conclude that $\Lim_{t\to T}V(t) = 0$, and the result will follow.

We proceed to show that there exists $D_v > 0$ such that $\|\mathbf{v}(t)\|\leq D_v, \forall t\in[t_1, T)$.
Since $d_{\mathcal{U}}(\mathbf{x}(t))>0, \;\forall t\in[t_1, T)$, $\dot{V}(t)$ in \eqref{equation_of_V_to_integrate} is well-defined $\forall t\in[t_1, T)$, and $V(t)$ is non-increasing on $[t_1, T)$.
Therefore, $\mathbf{z}(t)$ is bounded for all $t\in[t_1, T)$ and satisfies $\|\mathbf{z}(t)\|\leq \|\mathbf{z}(t_1)\|, \;\forall t\in[t_1, T)$.
Additionally, by Property \ref{property:bounded_vector_field}, $\|\boldsymbol{\nu}_d(\mathbf{x})\|\leq D_1$ for all $\mathbf{x}\in\mathcal{M}$.
Therefore, since $\mathbf{v}(t) = \mathbf{z}(t) + \boldsymbol{\nu}_d(\mathbf{x}(t))$, one has
\begin{equation}\label{bound_on_velocity}
\|\mathbf{v}(t)\|\leq D_v,\;\forall t\in[t_1, T),
\end{equation}
where $D_v = \|\mathbf{z}(t_1)\| + D_1$.

Now, we show that $d_{\mathcal{U}}(\mathbf{x}(t)) \leq D_dD_v(T-t), \;\forall t\in[t_1, T)$.
By Property \ref{property:distance2}, $\|\mathbf{P}(\mathbf{x})\nabla_{\mathbf{x}}d_{\mathcal{U}}(\mathbf{x})\|\leq D_d$ for all $\mathbf{x}\in\mathcal{N}^{\delta_u}$, where $D_d > 0$ and $\mathcal{N}^{\delta_u}$ is defined in \eqref{neighborhood_definition}.
Consequently, since $d_{\mathcal{U}}(\mathbf{x}(t))\in(0, \rho)$, $\rho = \min\{\delta_d, \epsilon_1\}$ and $\delta_d\leq \delta_u$, one has
\begin{equation*}
    \dot{d}_{\mathcal{U}}(\mathbf{x}(t)) \geq -D_dD_v, \;\forall t\in[t_1, T).
\end{equation*}
Integrating with respect to time from $t\in[t_1, T)$ to $T$, one obtains
\begin{equation*}
    d_{\mathcal{U}}(\mathbf{x}(T)) - d_{\mathcal{U}}(\mathbf{x}(t)) \geq -D_dD_v(T - t).
\end{equation*}
Substituting $d_{\mathcal{U}}(\mathbf{x}(T)) = 0$, and rearranging the terms, one gets
\begin{equation}\label{bound_on_distance}
    d_{\mathcal{U}}(\mathbf{x}(t)) \leq D_dD_v(T- t),\;\forall t\in[t_1, T).
\end{equation}

Using \eqref{equation_of_V_to_integrate} and \eqref{bound_on_distance}, one can write
\begin{equation*}
    \dot{V}(t) \leq \frac{-2k_d}{D_dD_v(T-t)}V(t), \;\forall t\in[t_1, T),
\end{equation*}
where $k_d, D_v > 0$.
Integrating with respect to time from $t_1$ to $t\in[t_1, T)$, one obtains
\begin{equation}\label{final_integration_of_V_from_t12T}
    V(t) \leq V(t_1)\left(\frac{T-t}{T-t_1}\right)^{\frac{2k_d}{D_dD_v}}.
\end{equation}
It follows from \eqref{final_integration_of_V_from_t12T} that $\Lim_{t\to T}V(t) = 0$, and hence $\Lim_{t\to T}\|\mathbf{z}(t)\| = 0$.
This completes the proof of Claim \ref{claim1:lemmaVTF} of Lemma \ref{lemma:safety_lemma}.

\subsubsection{Proof of Claim \ref{claim2:lemmaVTF}}
We show that for every $\boldsymbol{\xi}(0)\in\mathcal{M}\times\mathbb{R}^{n+1}$, there exist $t_s(\boldsymbol{\xi}(0))\geq0$ and $\zeta > 0$ such that for all $t\geq t_s(\boldsymbol{\xi}(0))$, $\dot{d}_{\mathcal{U}}(\mathbf{x}(t)) \geq \zeta $ whenever $d_{\mathcal{U}}(\mathbf{x}(t))\in(0, \delta_d]$.
This will imply that there exists $t_d(\boldsymbol{\xi}(0))\geq t_s(\boldsymbol{\xi}(0))$ such that $d_{\mathcal{U}}(\mathbf{x}(t))\geq \delta_d$ for all $t\geq t_d(\boldsymbol{\xi}(0))$. 

By Property \ref{condition:Assump2:moveaway}, $\delta_d \in (0, \delta_u]$.
Therefore, following arguments used to derive \eqref{equation:inequality_zt} in the proof of Claim \ref{claim1:lemmaVTF} of Lemma \ref{lemma:safety_lemma}, one can conclude that the inequality
\begin{equation}\label{expression2}
\dot{d}_{\mathcal{U}}(\mathbf{x}) \geq \mu - D_d\|\mathbf{z}\|,
\end{equation}
holds for all $\mathbf{x}\in\mathcal{N}^{\delta_d}$, where the existence of $\mu > 0$ is assumed in Property \ref{condition:Assump2:moveaway}, $\mathbf{z} = \mathbf{v} - \boldsymbol{\nu}_d(\mathbf{x})$, and the set $\mathcal{N}^{\delta_d}$ is obtained by replacing $\delta_u$ with $\delta_d$ in \eqref{neighborhood_definition}.
To proceed with the proof of Claim \ref{claim2:lemmaVTF}, we require the following lemma:
\begin{lemma}\label{lemma:monotonic_decrease_velocity_difference}
    Consider the closed-loop system \eqref{dynamics_motion_model_on_sphere}-\eqref{proposed_feedback_control_input}.
    Let $V(t) = \frac{1}{2}\|\mathbf{z}(t)\|^2$, where $\mathbf{z}(t) = \mathbf{v}(t) - \boldsymbol{\nu}_d(\mathbf{x}(t))$, then the following statements hold:
    \begin{enumerate}
        \item If $V(0) > 0$, then $V(t)$ is strictly decreasing for all $t\geq 0$ and $\Lim_{t\to\infty}V(t) = 0$.
        \item If $V(0) = 0$, then $V(t) = 0$ for all $t\geq 0$.
    \end{enumerate}
\end{lemma}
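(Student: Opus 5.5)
The plan is to derive an exact scalar ODE for $V$ and read both claims off it. First, compute $\dot V$ along the closed-loop system \eqref{dynamics_motion_model_on_sphere}-\eqref{proposed_feedback_control_input}. Since $\dot{\boldsymbol{\nu}}_d(\mathbf{x}) = \mathbf{J}_d(\mathbf{x})\dot{\mathbf{x}} = \mathbf{J}_d(\mathbf{x})\mathbf{P}(\mathbf{x})\mathbf{v}$, we get
\[
\dot{\mathbf{z}} = \mathbf{u} - \mathbf{J}_d(\mathbf{x})\mathbf{P}(\mathbf{x})\mathbf{v} = -k_d\beta(d_{\mathcal{U}}(\mathbf{x}))\mathbf{z}.
\]
The feedforward term $\mathbf{J}_d\mathbf{P}\mathbf{v}$ in \eqref{proposed_feedback_control_input} cancels exactly, so $\dot V = -2k_d\beta(d_{\mathcal{U}}(\mathbf{x}(t)))V$, as already noted in \eqref{derivative_of_V}.

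By Claim \ref{claim1:lemmaVTF} of Lemma \ref{lemma:safety_lemma}, which is already proved, $\mathbf{x}(t)\in\mathcal{M}$ for all $t\geq 0$. Hence $\beta(d_{\mathcal{U}}(\mathbf{x}(t)))$ is well defined, finite and positive along the solution. Writing $b(t) := k_d\beta(d_{\mathcal{U}}(\mathbf{x}(t)))>0$, we have the linear ODE $\dot{\mathbf{z}} = -b(t)\mathbf{z}$. Its solution is
\[
\mathbf{z}(t) = \exp\left(-\int_0^t b(s)\,ds\right)\mathbf{z}(0),
\]
and so
\[
V(t) = V(0)\exp\left(-2\int_0^t b(s)\,ds\right).
\]
For the second statement, $V(0)=0$ gives $V\equiv 0$ immediately; equivalently, uniqueness of solutions applies to the ODE for $\mathbf{z}$. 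For the first statement, $V(0)>0$ implies $V(t)>0$ for all $t$. Then $\dot V(t) = -2b(t)V(t)<0$, so $V$ is strictly decreasing.

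For convergence, the key observation is that $\beta\geq$ a positive constant everywhere on $\mathcal{M}$. By \eqref{beta_function_definition}, $\beta = d_{\mathcal{U}}^{-1}\geq \epsilon_1^{-1}>1$ on $d_{\mathcal{U}}\leq\epsilon_1$, and $\beta=1$ on $d_{\mathcal{U}}\geq\epsilon_2$. On $[\epsilon_1,\epsilon_2]$, $\phi$ maps into $[1,\epsilon_1^{-1}]$, so $\phi\geq 1$. Therefore $\beta\geq 1$ on $\mathcal{M}$ and $b(t)\geq k_d$. This yields $V(t)\leq V(0)e^{-2k_d t}\to 0$, giving exponential convergence.

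The only delicate step is justifying that the solution exists for all $t\geq0$ and that $d_{\mathcal{U}}$ never reaches zero, so that $b$ is finite on every compact interval. I would cite Claim \ref{claim1:lemmaVTF} for the latter. For forward completeness, I would note that $\|\mathbf{z}\|$ is nonincreasing and $\|\boldsymbol{\nu}_d\|\leq D_1$ by Property \ref{property:bounded_vector_field}, so $\mathbf{v}$ stays bounded. Since $\mathbf{x}\in\mathbb{S}^n$ is compact, finite escape is excluded. The argument needs only local existence together with these a priori bounds, so no circularity with Claim \ref{claim1:lemmaVTF} arises.
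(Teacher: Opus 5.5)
Your proof is correct and follows the route the paper relies on. The paper does not prove this lemma in-text; it defers to \cite[Lemma 4]{sawant2025extendingfirstorderroboticmotion}. The key ingredients are the same as yours: the identity $\dot V = -k_d\beta(d_{\mathcal{U}}(\mathbf{x}))\|\mathbf{z}\|^2$ already recorded in \eqref{derivative_of_V}, and positivity of $\beta$ along solutions guaranteed by Claim~\ref{claim1:lemmaVTF} of Lemma~\ref{lemma:safety_lemma}, whose proof does not use this lemma, so there is no circularity. Your explicit solution together with the lower bound $\beta\geq 1$ on $\mathcal{M}$ makes the argument self-contained and even gives the exponential estimate $V(t)\leq V(0)e^{-2k_d t}$.
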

The proof of Lemma \ref{lemma:monotonic_decrease_velocity_difference} is similar to the proof of \cite[Lemma 4]{sawant2025extendingfirstorderroboticmotion}.
According to Lemma \ref{lemma:monotonic_decrease_velocity_difference}, $\Lim_{t\to\infty}\|\mathbf{z}(t)\| = 0$.
Therefore, for any $s\in\left(0, \frac{\mu}{D_d}\right)$, there exists $t_s(\boldsymbol{\xi}(0)) \geq 0$ such that
\[\|\mathbf{z}(t)\| \leq s\; \text{for all}\; t\geq t_s(\boldsymbol{\xi}(0)).\]
Consequently, it follows from \eqref{expression2} that for all $t\geq t_s(\boldsymbol{\xi}(0))$, the inequality $\dot{d}_{\mathcal{U}}(\mathbf{x}(t))\geq \zeta > 0$ holds whenever $d_{\mathcal{U}}(\mathbf{x}(t))\in (0, \delta_d]$ with $\zeta = \mu - s$.
As a result, there exists a time $t_d(\boldsymbol{\xi}(0))\geq t_s(\boldsymbol{\xi}(0))$ such that $d_{\mathcal{U}}(\mathbf{x}(t)) \geq \delta_d$ for all $t\geq t_d(\boldsymbol{\xi}(0))$.
This completes the proof of Claim \ref{claim2:lemmaVTF} of Lemma \ref{lemma:safety_lemma}.

\subsubsection{Proof of Claim \ref{claim3:lemmaVTF}}
Taking the norm of both sides of \eqref{proposed_feedback_control_input}, one obtains
\begin{equation*}
    \begin{aligned}
        \|\mathbf{u}(\boldsymbol{\xi})\|&\leq k_d\beta(d_{\mathcal{U}}(\mathbf{x}))\|\mathbf{v} - \boldsymbol{\nu}_d(\mathbf{x})\|+\|\mathbf{J}_d(\mathbf{x})\|_{F}\|\mathbf{P}(\mathbf{x})\mathbf{v}\|,
    \end{aligned}
\end{equation*}where $\boldsymbol{\xi}=(\mathbf{x}, \mathbf{v})\in\mathcal{M}\times\mathbb{R}^{n+1}$ and we used the property $\|\mathbf{a} + \mathbf{b}\| \leq \|\mathbf{a}\| + \|\mathbf{b}\|$ for any $\mathbf{a}, \mathbf{b}\in\mathbb{R}^{n+1}$.
We have $\|\mathbf{P}(\mathbf{x})\mathbf{v}\|\leq \|\mathbf{v}\|$ for any $\mathbf{x}\in\mathbb{S}^n$ and for any $\mathbf{v}\in\mathbb{R}^{n+1}$.
It follows that
\begin{equation}\label{bounding_expression_1}
    \begin{aligned}
        \|\mathbf{u}(\boldsymbol{\xi})\|&\leq k_d\beta(d_{\mathcal{U}}(\mathbf{x}))\|\mathbf{v} - \boldsymbol{\nu}_d(\mathbf{x})\|+\|\mathbf{J}_d(\mathbf{x})\|_{F}\|\mathbf{v}\|.
    \end{aligned}
\end{equation}
According to Lemma \ref{lemma:monotonic_decrease_velocity_difference}, $\|\mathbf{v}(t) - \boldsymbol{\nu}_d(\mathbf{x}(t))\|\leq \bar{z}(\boldsymbol{\xi}(0))$, where $\bar{z}(\boldsymbol{\xi}(0)) = \|\mathbf{v}(0) - \boldsymbol{\nu}_d(\mathbf{x}(0))\|$ for all $t\geq 0$. 
Lemma \ref{lemma:monotonic_decrease_velocity_difference} also implies that $\|\mathbf{v}(t)\| \leq D_1 + \bar{z}(\boldsymbol{\xi}(0))$ for all $t\geq 0$, where the constant $D_1 >0$ is defined in Property \ref{property:bounded_vector_field}.
Consequently, by \eqref{bounding_expression_1}, for any $\boldsymbol{\xi}(0)\in\mathcal{M}\times\mathbb{R}^{n+1}$,
\begin{equation}\label{bounding_expression_2}
    \begin{aligned}
        \|\mathbf{u}(\boldsymbol{\xi}(t))\|&\leq k_d\beta(d_{\mathcal{U}}(\mathbf{x}(t)))\bar{z}(\boldsymbol{\xi}(0))\\
        &+(D_1 + \bar{z}(\boldsymbol{\xi}(0)))\|\mathbf{J}_d(\mathbf{x})\|_{F},
    \end{aligned}
\end{equation}
for all $t\geq 0$.

To show the existence of $D_{\mathbf{u}}(\boldsymbol{\xi}(0)) > 0$ such that $\|\mathbf{u}(\boldsymbol{\xi}(t))\|\leq D_{\mathbf{u}}(\boldsymbol{\xi}(0))$ for all $t\geq 0$, it is sufficient to show that for any initial condition $\boldsymbol{\xi}(0)$ in $\mathcal{M}\times\mathbb{R}^{n+1}$, there exist $\bar{\beta}(\boldsymbol{\xi}(0)) > 0$ and $\bar{J}(\boldsymbol{\xi}(0)) > 0$ such that the solution to the closed-loop system \eqref{dynamics_motion_model_on_sphere}-\eqref{proposed_feedback_control_input} satisfies $\beta(d_{\mathcal{U}}(\mathbf{x}(t)))\leq \bar{\beta}(\boldsymbol{\xi}(0))$ and $\|\mathbf{J}_d(\mathbf{x}(t))\|_F\leq \bar{J}(\boldsymbol{\xi}(0))$.

By Claim \ref{claim2:lemmaVTF} of Lemma \ref{lemma:safety_lemma}, for any solution to the closed-loop system \eqref{dynamics_motion_model_on_sphere}-\eqref{proposed_feedback_control_input} with initial condition  $\boldsymbol{\xi}(0)\in\mathcal{M}\times\mathbb{R}^{n+1}$, there exists a finite time $t_d(\boldsymbol{\xi}(0)) \geq 0$ such that \begin{equation}\label{delta_d_separation_condition}d_{\mathcal{U}}(\mathbf{x}(t))\geq \delta_d > 0, \;\forall t\geq t_d(\boldsymbol{\xi}(0)).
\end{equation}

Additionally, by \eqref{beta_function_definition}, $\beta(d_{\mathcal{U}}(\mathbf{x}))$ is strictly positive where $d_{\mathcal{U}}(\mathbf{x}) > 0$, and is undefined if and only if $d_{\mathcal{U}}(\mathbf{x}) = 0$.
Since $d_{\mathcal{U}}(\mathbf{x}(t))\geq \delta_d > 0$ for all $t\geq t_d(\boldsymbol{\xi}(0))$, and $\mathbb{S}^n$ is a compact subset of $\mathbb{R}^{n+1}$, there exists $\bar{\beta}_1(\boldsymbol{\xi}(0)) > 0$ such that $\beta(d_{\mathcal{U}}(\mathbf{x}(t))) \leq \bar{\beta}_1(\boldsymbol{\xi}(0)), \;\forall t\geq t_d(\boldsymbol{\xi}(0))$.
Furthermore, $\beta(d_{\mathcal{U}}(\mathbf{x}(t))$ is continuous for all $t\geq 0$, and by Claim \ref{claim1:lemmaVTF} of Lemma \ref{lemma:safety_lemma}, $d_{\mathcal{U}}(\mathbf{x}(t)) > 0$ for all $t\geq 0$.
Therefore, over the compact time interval $[0, t_d(\boldsymbol{\xi}(0))]$, there exists an upper bound $\bar{\beta}_2(\boldsymbol{\xi}(0)) > 0$ such that $\beta(d_{\mathcal{U}}(\mathbf{x}(t))) \leq \bar{\beta}_2(\boldsymbol{\xi}(0))$.
As a result,
\begin{equation}\label{beta_bound}
    \beta(d_{\mathcal{U}}(\mathbf{x}(t))) \leq \bar{\beta}(\boldsymbol{\xi}(0)),\;\forall t\geq 0,
\end{equation}
where $\bar{\beta}(\boldsymbol{\xi}(0)) = \max\{\bar{\beta}_1(\boldsymbol{\xi}(0)), \bar{\beta}_2(\boldsymbol{\xi}(0))\}$.

Now, we show that there exists $\bar{J}(\boldsymbol{\xi}(0)) > 0$ such that $\|\mathbf{J}_d(\mathbf{x}(t))\|_F \leq \bar{J}(\boldsymbol{\xi}(0)), \;\forall t\geq 0$.
We know that $d_{\mathcal{U}}(\mathbf{x}(t)) \geq \delta_d \geq 0$ for all $t\geq t_d(\boldsymbol{\xi}(0))$, as stated in \eqref{delta_d_separation_condition}.
Therefore, by Property \ref{property:bounded_jacobian}, $\|\mathbf{J}_d(\mathbf{x}(t))\|_F\leq D_2, \;\forall t\geq t_d(\boldsymbol{\xi}(0))$.
By Claim \ref{claim1:lemmaVTF} of Lemma \ref{lemma:safety_lemma}, $\mathbf{x}(t)\in\mathcal{M}$ for all $t\in[0, t_d(\boldsymbol{\xi}(0))]$.
Moreover, by Property \ref{property:bounded_jacobian}, $\mathbf{J}_d(\mathbf{x}(t))$ is well-defined $\forall t\in[0, t_d(\boldsymbol{\xi}(0))]$.
Consequently, there exists $J_1(\boldsymbol{\xi}(0))>0$ such that $\|\mathbf{J}_d(\mathbf{x}(t))\|_F \leq J_1(\boldsymbol{\xi}(0)), \;\forall t\in[0, t_d(\boldsymbol{\xi}(0))]$.
As a result, 
\begin{equation}\label{J_d_bound}
    \|\mathbf{J}_d(\mathbf{x}(t))\|_F\leq \bar{J}(\boldsymbol{\xi}(0)), \forall t\geq 0,
\end{equation}
where $\bar{J}(\boldsymbol{\xi}(0))=\max\{D_2, J_1(\boldsymbol{\xi}(0))\}$.

It follows from \eqref{bounding_expression_2}, \eqref{beta_bound} and \eqref{J_d_bound} that for any initial condition $\boldsymbol{\xi}(0)\in\mathcal{M}\times\mathbb{R}^{n+1}$, $\|\mathbf{u}(\boldsymbol{\xi}(t))\| \leq D_{\mathbf{u}}(\boldsymbol{\xi}(0))$ for all $t\geq 0$, where
\[
D_{\mathbf{u}}(\boldsymbol{\xi}(0)) = k_d\bar{\beta}(\boldsymbol{\xi}(0))\bar{z}(\boldsymbol{\xi}(0)) + (D_1 + \bar{z}(\boldsymbol{\xi}(0)))\bar{J}(\boldsymbol{\xi}(0)).
\]
This completes the proof of Claim \ref{claim3:lemmaVTF} of Lemma \ref{lemma:safety_lemma}.

\subsection{Proof of Theorem \ref{theorem:VTF}}\label{proof:theorem:VTF}
For the closed-loop system \eqref{dynamics_motion_model_on_sphere}-\eqref{proposed_feedback_control_input}, the forward invariance of $\mathcal{M}\times\mathbb{R}^{n+1}$ can be easily established using Claim \ref{claim1:lemmaVTF} of Lemma \ref{lemma:safety_lemma}.
Furthermore, the monotonic decrease of $\|\mathbf{v}(t) - \boldsymbol{\nu}_d(\mathbf{x}(t))\|$ for all $t\geq 0$ follows directly from Lemma \ref{lemma:monotonic_decrease_velocity_difference}.

\subsubsection{Proof of Claim \ref{claim3:theorem}} For the closed-loop system \eqref{dynamics_motion_model_on_sphere}-\eqref{proposed_feedback_control_input}, by setting $\dot{\mathbf{x}} = \mathbf{0}_{n+1}$ and $\dot{\mathbf{v}} = \mathbf{0}_{n+1}$, and using Property \ref{condition:set_of_equilibria}, one can verify that the set of equilibrium points is $\mathcal{S}\cup\{(\mathbf{x}_d, \mathbf{0}_{n+1})\}$.

\subsubsection{Proof of Claim \ref{claim4:theorem}} The proof is separated into two parts as follows:

\noindent{\bf Part 1:} We show that the set $\mathcal{S}\cup\{(\mathbf{x}_d, \mathbf{0}_{n+1})\}$ is globally attractive for the closed-loop system \eqref{dynamics_motion_model_on_sphere}-\eqref{proposed_feedback_control_input} over $\mathcal{M}\times\mathbb{R}^{n+1}$.
Specifically, we show that $\Lim_{t\to\infty}(\mathbf{x}(t), \mathbf{v}(t))\in\mathcal{S}\cup\{(\mathbf{x}_d, \mathbf{0}_{n+1})\}$.

According to Lemma \ref{lemma:monotonic_decrease_velocity_difference}, $\Lim_{t\to\infty}\mathbf{v}(t) - \boldsymbol{\nu}_d(\mathbf{x}(t)) = \mathbf{0}_{n+1}$. 
Additionally, as mentioned earlier, $\mathbf{v}(t)$ is bounded for all $t\geq 0$.
Since $\dot{\mathbf{x}} = \mathbf{P}(\mathbf{x}) \mathbf{v}$, the boundedness of $\mathbf{v}(t)$ implies that $\mathbf{x}(t)$ cannot grow unbounded in finite time. 
Furthermore, according to Property \ref{condition:set_of_equilibria}, the set $\mathcal{E}\cup\{\mathbf{x}_d\}$ is globally attractive for the system $\dot{\mathbf{x}} = \boldsymbol{\nu}_d(\mathbf{x})$ over $\mathcal{M}$.
Consequently, since $\Lim_{t\to\infty}\mathbf{v}(t) - \boldsymbol{\nu}_d(\mathbf{x}(t)) = \mathbf{0}_{n+1}$, it follows that $\Lim_{t\to\infty}\mathbf{x}(t)\in\mathcal{E}\cup\{\mathbf{x}_d\}$.

We also know that $\boldsymbol{\nu}_d(\mathbf{x}) = \mathbf{0}_{n+1}$ for all $\mathbf{x}\in\mathcal{E}\cup\{\mathbf{x}_d\}$, as stated in Property \ref{condition:set_of_equilibria}.
Therefore, $\Lim_{t\to\infty}\mathbf{x}(t)\in\mathcal{E}\cup\{\mathbf{x}_d\}$ implies $\Lim_{t\to\infty}\boldsymbol{\nu}_d(\mathbf{x}(t)) = \mathbf{0}_{n+1}$.
Since $\Lim_{t\to\infty}\mathbf{v}(t) - \boldsymbol{\nu}_d(\mathbf{x}(t)) = \mathbf{0}_{n+1}$ and $\Lim_{t\to\infty}\boldsymbol{\nu}_d(\mathbf{x}(t))= \mathbf{0}_{n+1}$, one has $\Lim_{t\to\infty}\mathbf{v}(t) = \mathbf{0}_{n+1}$.
Since $\Lim_{t\to\infty}\mathbf{x}(t)\in\mathcal{E}\cup\{\mathbf{x}_d\}$ and $\Lim_{t\to\infty}\mathbf{v}(t) = \mathbf{0}_{n+1}$, it follows from \eqref{equilibrium_set_theorem} that $\Lim_{t\to\infty}(\mathbf{x}(t), \mathbf{v}(t))\in\mathcal{S}\cup\{(\mathbf{x}_d,\mathbf{0}_{n+1})\}$.

\noindent{\bf Part 2:} 
We show that for the closed-loop system \eqref{dynamics_motion_model_on_sphere}-\eqref{proposed_feedback_control_input}, the desired equilibrium $(\mathbf{x}_d, \mathbf{0}_{n+1})$ is asymptotically stable, and every equilibrium point $(\mathbf{x}^*, \mathbf{0}_{n+1})$ in $\mathcal{S}$ has a stable manifold of zero Lebesgue measure on $\mathbb{S}^n\times\mathbb{R}^{n+1}$.

To analyze the properties of the equilibrium points in $\mathcal{S}\cup\{(\mathbf{x}_d, \mathbf{0}_{n+1})\}$, we examine the eigenvalues of the Jacobian matrices of the closed-loop system \eqref{dynamics_motion_model_on_sphere}-\eqref{proposed_feedback_control_input} at these points. 
The Jacobian matrix $\mathbf{J}(\mathbf{x}, \mathbf{v})$ is given by
\begin{equation*}
    \mathbf{J}(\mathbf{x}, \mathbf{v}) = \begin{bmatrix}-\mathbf{x}^\top\mathbf{v}\mathbf{I}_{n+1} - \mathbf{x}\mathbf{v}^\top & \mathbf{P}(\mathbf{x})\\
    \frac{\partial\mathbf{u}(\mathbf{x}, \mathbf{v})}{\partial\mathbf{x}} & \frac{\partial\mathbf{u}(\mathbf{x}, \mathbf{v})}{\partial\mathbf{v}}
        
    \end{bmatrix},
\end{equation*}
where
\begin{equation*}
\begin{aligned}
    \frac{\partial\mathbf{u}(\mathbf{x}, \mathbf{v})}{\partial\mathbf{x}} &= 
    - k_d(\mathbf{v} - \boldsymbol{\nu}_d(\mathbf{x}))\nabla_{\mathbf{x}}\beta(d_{\mathcal{U}}(\mathbf{x}))^\top\\
    &k_d\beta(d_{\mathcal{U}}(\mathbf{x}))\mathbf{J}_d(\mathbf{x}) + \frac{\partial}{\partial\mathbf{x}}\left(\mathbf{J}_d(\mathbf{x})\mathbf{P}(\mathbf{x})\right)\mathbf{v},
    \end{aligned}
\end{equation*}
and 
\begin{equation*}
        \frac{\partial\mathbf{u}(\mathbf{x}, \mathbf{v})}{\partial\mathbf{v}} = \mathbf{J}_d(\mathbf{x})\mathbf{P}(\mathbf{x})-k_d\beta(d_{\mathcal{U}}(\mathbf{x}))\mathbf{I}_{n+1},
\end{equation*}
where $\mathbf{J}_d(\mathbf{x}) = \frac{\partial\boldsymbol{\nu}_d(\mathbf{x})}{\partial\mathbf{x}}$ as defined in Property \ref{condition:eigenvalues_of_equilibria}, and the orthogonal projection operator $\mathbf{P}(\mathbf{x})$ is defined in \eqref{orthogonal_projection_operator_formula}.
By Property \ref{condition:Assump2:differentiability}, $\boldsymbol{\nu}_d(\cdot)$ is twice continuously differentiable in an open neighborhood of $\mathcal{E}\cup\{\mathbf{x}_d\}$, and by construction, the scalar function $\beta(d_{\mathcal{U}}(\cdot))$, as defined in \eqref{beta_function_definition}, is continuously differentiable on $\mathcal{M}$, therefore, $\mathbf{J}(\mathbf{x}, \mathbf{v})$ is continuous in an open neighborhood of $\mathcal{S}\cup\{(\mathbf{x}_d, \mathbf{0}_{n+1})\}$.

According to Property \ref{condition:set_of_equilibria}, for all $(\mathbf{x}^*, \mathbf{0}_{n+1})\in\mathcal{S}\cup\{(\mathbf{x}_d, \mathbf{0}_{n+1})\}$, one has $\boldsymbol{\nu}_d(\mathbf{x}^*) = \mathbf{0}_{n+1}$.
Therefore, $\mathbf{J}(\mathbf{x}^*, \mathbf{0}_{n+1})$ is given by
\begin{equation}\label{jacobian_matrix_general_structure}
    \mathbf{J}(\mathbf{x}^*, \mathbf{0}_{n+1}) = \begin{bmatrix}\mathbf{O}_{n+1} & \mathbf{P}(\mathbf{x}^*)\\
    k_d\beta(d_{\mathcal{U}}(\mathbf{x}^*))\mathbf{J}_d(\mathbf{x}^*) & \mathbf{D}^*
        
    \end{bmatrix},
\end{equation}
where the matrix $\mathbf{D}^*$ is evaluated as
\begin{equation}\label{D_star_expression}
    \mathbf{D}^* =  \mathbf{J}_d(\mathbf{x}^*)\mathbf{P}(\mathbf{x}^*) -k_d\beta(d_{\mathcal{U}}(\mathbf{x}^*))\mathbf{I}_{n+1}.
\end{equation}
We proceed to identify the eigenvalues of $\mathbf{J}(\mathbf{x}^*, \mathbf{0}_{n+1})$ with $\mathbf{x}^*\in\mathcal{E}\cup\{\mathbf{x}_d\}$.

Let $\lambda$ be an eigenvalue of $\mathbf{J}(\mathbf{x}^*, \mathbf{0}_{n+1})$ and let $\mathbf{n} = \begin{bmatrix}\mathbf{n}_1\\ \mathbf{n}_2\end{bmatrix}$ be a corresponding eigenvector satisfying
\begin{equation}\label{basic_eigen_relationship}
\mathbf{J}(\mathbf{x}^*, \mathbf{0}_{n+1})\mathbf{n} = \lambda\mathbf{n},
\end{equation}
where $\mathbf{n}_1,\mathbf{n}_2\in\mathbb{R}^{n+1}$.
Substituting \eqref{jacobian_matrix_general_structure} yields the following two equations:
\begin{equation}\label{eigen1_equation}
    \mathbf{P}(\mathbf{x}^*)\mathbf{n}_2 = \lambda\mathbf{n}_1,
\end{equation}
and
\begin{equation}\label{eigen2_equation}
    k_d\beta(d_{\mathcal{U}}(\mathbf{x}^*))\mathbf{J}_d(\mathbf{x}^*)\mathbf{n}_1 + (\mathbf{D}^* - \lambda\mathbf{I}_{n+1})\mathbf{n}_2 = \mathbf{0}_{n+1}.
\end{equation}

Suppose $\lambda \neq 0$.
By \eqref{eigen1_equation}, one has 
\begin{equation}\label{n_1-n_2-relation}\mathbf{n}_1 = \frac{1}{\lambda}\mathbf{P}(\mathbf{x}^*)\mathbf{n}_2.\end{equation} 
Substituting this into \eqref{eigen2_equation}, using \eqref{D_star_expression} and rearranging the terms, one obtains
\begin{equation}\label{simplified_expression_eigenvalues}
    \begin{aligned}
        (\lambda + k_d\beta(d_{\mathcal{U}}(\mathbf{x}^*)))(\mathbf{J}_d(\mathbf{x}^*)\mathbf{P}(\mathbf{x}^*) - \lambda\mathbf{I}_{n+1})\mathbf{n}_2 = \mathbf{0}_{n+1}
    \end{aligned}
\end{equation}

If $\lambda = -k_d\beta(d_{\mathcal{U}}(\mathbf{x}^*))$, then \eqref{eigen2_equation} holds for every $n_2\in\mathbb{R}^{n+1}$. 
From \eqref{n_1-n_2-relation}, the associated eigenvectors are
\begin{equation*}
    \mathbf{n} = \begin{bmatrix}
        -\frac{1}{k_d\beta(d_{\mathcal{U}}(\mathbf{x}^*))}\mathbf{P}(\mathbf{x}^*)\mathbf{n}_2 \\
        \mathbf{n}_2
    \end{bmatrix}, \; \forall\mathbf{n}_2\in\mathbb{R}^{n+1}\setminus\{\mathbf{0}_{n+1}\}.
\end{equation*}
Hence, the eigenspace associated with $\lambda = -k_d\beta(d_{\mathcal{U}}(\mathbf{x}^*))$ has dimension $n+1$.
Therefore, the geometric multiplicity of this eigenvalue is $n+1$.

We proceed to identify the remaining eigenvalues of $\mathbf{J}(\mathbf{x}^*, \mathbf{0}_{n+1})$, defined in \eqref{jacobian_matrix_general_structure}.
If $\mathbf{n}_2\in\mathsf{T}_{\mathbf{x}^*}\mathbb{S}^n$, then by the property $\mathbf{P}(\mathbf{x}^*)\mathbf{n}_2 = \mathbf{n}_2$, it follows that 
\[
\left(\mathbf{J}_d(\mathbf{x}^*)\mathbf{P}(\mathbf{x}^*)-\lambda\mathbf{I}_{n+1}\right)\mathbf{n}_2 = \left(\mathbf{J}_d(\mathbf{x}^*)-\lambda\mathbf{I}_{n+1}\right)\mathbf{n}_2,
\]
where the tangent space $\mathsf{T}_{\mathbf{x}^*}\mathbb{S}^n$ to $\mathbb{S}^n$ at $\mathbf{x}^*$ is defined as in Section \ref{section:notations}.
Therefore, any non-zero $\lambda$ satisfying $\left(\mathbf{J}_d(\mathbf{x}^*)-\lambda\mathbf{I}_{n+1}\right)\mathbf{n}_2 = \mathbf{0}_{n+1}$ ensures that \eqref{simplified_expression_eigenvalues} holds for any $\mathbf{n}_2\in\mathsf{T}_{\mathbf{x}^*}\mathbb{S}^n$.
Consequently, any non-zero eigenvalue $\lambda_g$ of $\mathbf{J}_d(\mathbf{x}^*)$ with associated eigenvector $\mathbf{n}_2$ in $\mathsf{T}_{\mathbf{x}^*}\mathbb{S}^n\setminus\{\mathbf{0}_{n+1}\}$ is an eigenvalue of $\mathbf{J}(\mathbf{x}^*, \mathbf{0}_{n+1})$.
From \eqref{n_1-n_2-relation}, the associated eigenvector is given by 
\[
\mathbf{n} = \begin{bmatrix}-\frac{1}{\lambda_g}\mathbf{P}(\mathbf{x}^*)\mathbf{n}_2\\\mathbf{n}_2\end{bmatrix},
\]
where $\mathbf{n}_2\in\mathsf{T}_{\mathbf{x}^*}\mathbb{S}^n\setminus\{\mathbf{0}_{n+1}\}$.

Even though the velocity $\mathbf{v}$ evolves in $\mathbb{R}^{n+1}$, the orthogonal projection operator $\mathbf{P}$ in \eqref{dynamics_motion_model_on_sphere} ensures that if $\mathbf{x}(0)\in\mathbb{S}^n$, then $\mathbf{x}(t)\in\mathbb{S}^n$ for all $t\geq 0$.
Consequently, the manifold $\mathbb{S}^n\times\mathbb{R}^{n+1}$ is forward invariant under the closed-loop system \eqref{dynamics_motion_model_on_sphere}-\eqref{proposed_feedback_control_input}.
Therefore, to determine stability of the equilibrium points $(\mathbf{x}^*, \mathbf{0}_{n+1})$ in $\mathcal{S}\cup\{(\mathbf{x}_d, \mathbf{0}_{n+1})\}$, it is sufficient to analyze the eigenvalues of the Jacobian $\mathbf{J}(\mathbf{x}^*, \mathbf{0}_{n+1})$ such that the associated eigenvectors belong to $\mathsf{T}_{\mathbf{x}^*}\mathbb{S}^n\times\mathbb{R}^{n+1}$.
%As a result, we ignore the eigenvalue $\lambda = 0$ because its corresponding eigenvector $\begin{bmatrix}\mathbf{x}^*\\\mathbf{0}_{n+1}\end{bmatrix}$ does not belong to $\mathsf{T}_{\mathbf{x}^*}\mathbb{S}^n\times\mathbb{R}^{n+1}$.

By Property \ref{condition:set_of_equilibria}, $\left(\mathcal{E}\cup\{\mathbf{x}_d\}\right)\subset\mathcal{M}$ and therefore, by \eqref{beta_function_definition}, $\beta(d_{\mathcal{U}}(\mathbf{x}^*)) > 0$ for all $\mathbf{x}^*\in\mathcal{E}\cup\{\mathbf{x}_d\}$.
Because $k_d > 0$, one has $-k_d\beta(d_{\mathcal{U}}(\mathbf{x}^*)) < 0$.
Consequently, the stability of the equilibrium points $(\mathbf{x}^*, \mathbf{0}_{n+1})$ in $\mathcal{S}\cup\{(\mathbf{x}_d, \mathbf{0}_{n+1})\}$ can be inferred by analyzing the eigenvalues of $\mathbf{J}_d(\mathbf{x}^*)$ such that the associated eigenvectors belong to the tangent space $\mathsf{T}_{\mathbf{x}^*}\mathbb{S}^n$.

According to Property \ref{condition:xd}, every eigenvalue of $\mathbf{J}_d(\mathbf{x}_d)$ whose associated eigenvector belongs to $\mathsf{T}_{\mathbf{x}_d}\mathbb{S}^n$ has negative real parts. 
Consequently, $(\mathbf{x}_d, \mathbf{0}_{n+1})$ is asymptotically stable for the closed-loop system \eqref{dynamics_motion_model_on_sphere}-\eqref{proposed_feedback_control_input} on $\mathcal{M}\times\mathbb{R}^{n+1}$.
Furthermore, according to Property \ref{condition:undesired}, for each $\mathbf{x}^*\in\mathcal{E}$, at least one eigenvalue whose associated eigenvector belongs to $\mathsf{T}_{\mathbf{x}^*}\mathbb{S}^n$ has a positive real part.
Consequently, by virtue of the center manifold theorem \cite[Section 2.7, Pg 116]{perko2013differential}, every undesired equilibrium point $(\mathbf{x}^*, \mathbf{0}_{n+1})$ in $\mathcal{S}$ has stable manifold of zero Lebesgue measure in $\mathbb{S}^n\times\mathbb{R}^{n+1}$.
Additionally, as proved earlier, $\mathcal{S}\cup\{(\mathbf{x}_d, \mathbf{0}_{n+1})\}$ is globally attractive for the closed-loop system \eqref{dynamics_motion_model_on_sphere}-\eqref{proposed_feedback_control_input} over $\mathcal{M}\times\mathbb{R}^{n+1}$.
Consequently, the desired equilibrium point $(\mathbf{x}_d, \mathbf{0}_{n+1})$ is almost globally asymptotically stable for the closed-loop system \eqref{dynamics_motion_model_on_sphere}-\eqref{proposed_feedback_control_input} over $\mathcal{M}\times\mathbb{R}^{n+1}$. 
This completes the proof of Claim \ref{claim4:theorem} of Theorem \ref{theorem:VTF}.

% \subsection{Proof of Lemma \ref{lemma:AssumptionSatisfaction}} \label{proof:lemma:AssumptionSatisfaction}

\end{appendix}
%%%%%%%%%%%%%%%%%%%%%%%%%%%%%%

\bibliographystyle{IEEEtran}
\bibliography{reference}

\end{document}